\numberwithin{equation}{section}
\newcommand{\R}{\mathbb R}  %@@
\renewcommand{\b}{\big}
\newcommand{\Z}{\mathbb Z}  %@@
\newcommand{\N}{\mathbb N}  %@@
\newcommand{\C}{\mathbb C}  %@@
\renewcommand{\H}{\mathbb H}  %@@
\newcommand{\hf}{\frac{1}{2}}  %@@
\newcommand{\intR}{\int_{-\infty}^{\infty}}
\renewcommand{\l}{\left}
\renewcommand{\r}{\right}
\newcommand{\generic}{\left( \begin{smallmatrix} a & b \\ c & d
 \end{smallmatrix} \right)}
\newcommand{\eps}{\varepsilon}
\renewcommand{\Re}{{\mathrm{Re\,}}}  %@@
\renewcommand{\Im}{{\mathrm{Im\,}}}  %@@
\newcommand{\sm}[4]{\l(\begin{smallmatrix} #1 & #2 \\ #3 & #4\end{smallmatrix}\r)}
\newcommand{\mm}{\,{\mathrm{mod}}\,}
\newcommand{\E}{{\mathcal E}}
\newcommand{\notdiv}{\nmid}
\newcommand{\Sqo}{S\!f^{\mathrm{odd}}}
\newcommand{\Lan}{\,\big\langle}
\newcommand{\Ran}{\big\rangle\,}
\renewcommand{\b}{\big}
\renewcommand{\include}{\input}
\begin{document}

\theoremstyle{plain}
\newtheorem{theorem}{Theorem}[section]
\newtheorem{proposition}[theorem]{Proposition}
\newtheorem{prop}[theorem]{Proposition}
\newtheorem{lemma}[theorem]{Lemma}
\newtheorem{lem}[theorem]{Lemma}
\newtheorem{fact}[theorem]{Fact}
\newtheorem{corollary}[theorem]{Corollary}
\newtheorem{remark}[theorem]{Remark}
\theoremstyle{definition}
\newtheorem{definition}[theorem]{Definition}
\numberwithin{equation}{section}
\newcommand{\ch}{{\mathrm{char}}}
\newcommand{\Wig}{{\mathrm{Wig}}}

\newcommand{\Qs}{Q^{\hf}}
\newcommand{\Qms}{Q^{-\hf}}

\newcommand{\Zert}{\b\bracevert}

\newcommand{\Fymp}{{\mathcal F}^{\mathrm{symp}}}
\newcommand{\Feuc}{{\mathcal F}^{\mathrm{euc}}}
\newcommand{\Conj}{{\mathcal C}^{\mathrm{conj}}}
\newcommand{\K}{{\mathfrak K}}

\newcommand{\slut}{\end{document}}

\title{The Riemann and Lindel\"of hypotheses}

\author{Andr\'e Unterberger, University of Reims, CNRS UMR9008}

\maketitle

\section{Introduction}

\newpage

\section{A P.D.E introduction to automorphic functions or distributions}

This section will not be used in the present book. It may be an introduction as good as any to the subject of automorphic functions and distributions, since it makes the link between the two concepts a purely geometric one. We advise the general reader to jump to the next section, and come back to this introduction only if, and when, he feels like it, say after he has read the section on Eisenstein distributions and, even better, Section 2.2.\\

The theory of automorphic distributions could have originated from several approaches: the Lax-Phillips automorphic scattering theory \cite{lap}, representation-theoretic facts such as the way \cite{GGPS} the principal series of representations of $SL(2,\R)$ can be realized by means of functions on the hyperbolic half-plane or by distributions in the plane, or the theory of the Radon transformation \cite{hel} in one of its simplest examples. As a matter of fact, it grew out of a cooperation between the Lax-Phillips theory and pseudodifferential operator theory. In the course of the two decades between its introduction \cite{aumod} and its present application, we were led to stressing the pseudodifferential and representation-theoretic aspects, and we almost forgot the Lax-Phillips origin of the construction. We feel that recovering this aspect of the theory may contribute to a good understanding of the whole rich situation.\\

In the eighties, with the aim of teaching ourselves some basics of harmonic analysis and modular form theory, we developed generalizations of the Weyl pseudodifferential calculus of operators in which domains such as the hyperbolic half-plane $\H$ could serve as so-called phase spaces (the place where symbols live). The space of functions serving as arguments of the operators could be taken as any of the spaces in the holomorphic discrete series of representations of $SL(2,\R)$ or its continuation $\l({\mathcal D}_{\tau+1}\r)_{\tau>-1}$ (Knapp's notation). While representation-theoretic and Hilbert-type facts were quite satisfactory, the symbolic calculus of operators was not: the explicit formulas for the composition of symbols (the notion corresponding to the composition of operators) were not as easy as the ones of the Weyl calculus. We soon found out \cite[theor.\,9.9]{birkfirst} that, in order to save the situation, we had to use for symbols pairs of functions on $\H$, and let the two parts of the calculus act on a pair of spaces associated to parameters $\tau,\tau+1$: the case when $\tau=-\hf$ is the one that would coincide, after some changes of variables, to the Weyl calculus.\\

This necessity to use pairs of functions on $\H$ brought our interest to the Lax-Phillips automorphic scattering theory. It starts with the consideration of the cone
\begin{equation}
C=\{\eta= (\eta_0,\eta_1,\eta_2) \in \R^3\colon \eta_0>0,\,\eta_0^2-\eta_1^2-\eta_2^2>0\},
\end{equation}
its boundary $\partial C$ (the forward light-cone in $(1+2)$-dimensional spacetime), and the ``mass hyperboloid'' ${\mathcal H}=\{\eta\colon \eta_0>0,\,\eta_0^2-\eta_1^2-\eta_2^2=1\}$. As is well-known,
the hyperbolic half-plane $\H$ and ${\mathcal H}$ are equivalent models of the symmetric space
$G/K$, with $G=SL(2,\R),\,K=SO(2)$. One enriches the correspondence between the two by means of rescalings, so as to fill up $C$, obtaining the map
\begin{equation}\label{A22}
(t,\,z)\mapsto \begin{pmatrix} \eta_0+\eta_1 & \eta_2 \\ \eta_2 & \eta_0-\eta_1\end{pmatrix}
=e^t\,\begin{pmatrix}
\frac{|z|^2}{y} & \frac{x}{y} \\ \frac{x}{y} & \frac{1}{y}\end{pmatrix}
\end{equation}
from $\R\times\,\H$ to $C$. In $C$, one takes interest in the ``d'Alembertian'' operator \
\begin{equation}
\square = \frac{\partial^2}{\partial \eta_0^2}-\frac{\partial^2}{\partial \eta_1^2}-\frac{\partial^2}{\partial \eta_2^2}.
\end{equation}
On $\H$, one disposes of the Laplacian $\Delta$. Now \cite[p.11]{lap}, under the map (\ref{A22}) and the gauge transformation $u\mapsto W=e^{-\frac{t}{2}}u$, the equation $\square W=0$ inside $C$ is equivalent to the wave equation
\begin{equation}\label{A14}
\frac{\partial^2u}{\partial t^2}+(\Delta-\frac{1}{4})\,u=0.
\end{equation}\\

On one hand, one considers the problem
\begin{equation}\label{A15}
\begin{cases}
\frac{\partial^2u}{\partial t^2}+(\Delta-\frac{1}{4})\,u=0\\
u(0,\,z)=f_0(z)\\
\frac{\partial u}{\partial t}(0,\,z)=f_1(z),
\end{cases}
\end{equation}
in which a solution of (\ref{A34}) is characterized by a pair of Cauchy data. On the other hand, $\partial C$ is totally characteristic for the operator $\square$ and, in a strikingly different way, just one datum on $\partial C$ is needed to characterize $W$.\\

Indeed, given a solution of the equation $\square W=0$ inside $C$, extend it by zero outside $C$,
obtaining a function $\breve{W}$ in $\R^3$. The function $W$ is characterized by the density (a function on $\partial C$) of $\square \breve{W}$ (taken in the distribution sense) with respect to the measure $\frac{d\eta_1d\eta_2}{\eta_0}$. We prefer, having pseudodifferential analysis in mind, to substitute
even functions $h$ on $\R^2$ for functions on $\partial C$ by means of the map $h\mapsto Qh$ such that
\begin{equation}
h(x,\,\xi)=(Qh)\l(\frac{x^2+\xi^2}{2},\,\frac{x^2-\xi^2}{2},\,x\xi\r).
\end{equation}
Then, define as $\widetilde{Qh}$ the measure on $\R^3$, supported in $\partial C$, the density of which with respect to the measure $\frac{d\eta_1d\eta_2}{\eta_0}$ is the function $Qh$.\\

Let $Z_2$ be the fundamental solution of $\square$ supported in the closure of $C$, as provided by M.Riesz'\,theory \cite{riesz}, given as
\begin{equation}
Z_2(\eta)=\frac{1}{2\pi}\,(\eta_0^2-\eta_1^2-\eta_2^2)^{-\hf},\qquad \eta \in C.
\end{equation}
The function $\breve{W}=Z_2\,*\,\widetilde{Qh}$, supported in the closure of $C$, satisfies the (distribution) equation $\square(Z_2\,*\,\widetilde{Qh})=\widetilde{Qh}$ in $\R^3$: in particular $\square W=0$ inside $C$. Just one function (the function $h$) is thus needed to build a solution $W$ of $\square W=0$ inside $C$.\\

All solutions can be obtained in this way, under some explicit regularity conditions. To prove this,
we must make the correspondence between the pair $(f_0,f_1)$ in the Cauchy problem  (\ref{A15}) and the function $h$, when $u$ and $W$ are linked by the Lax-Phillips transformation, explicit. To this effect, we introduce the Radon transformation $V$ from functions in $\H$ to even functions in $\R^2$ which is the adjoint of the map $V^*$, the operator from functions, or distributions in $\R^2$ to functions in $\H$ defined by the equation, when convergent,
\begin{equation}
(V^*\,h)(g\,.\,i)=\int_K h((gk)\,.\,(\begin{smallmatrix}1 \\ 0\end{smallmatrix}))\,dk,
\end{equation}
where $g\in SL(2,\R)$ and $K=SO(2)$. This is the simplest case of an extensive theory developed by Helgason \cite{hel}. On the other hand, we introduce the operator $T$ on even functions in the plane, the function in the spectral- theoretic sense of $2i\pi\E$ given as \cite[(4.10)]{aumod}
\begin{equation}
T=(\frac{\pi}{2})^{\hf}\,\frac{\Gamma(\hf-i\pi\E)}{\Gamma(-1\pi\E)}=\pi^{-\hf}\,(-i\pi\E)\int_0^{\infty} t^{-\hf}(1+t)^{-1+i\pi\E}\,dt.
\end{equation}
After some calculations \cite[p.\,29 and p.\,195-197]{aumod} based on the use of the Radon transformation and the M.Riesz transform, one ends up with the explicit formula
\begin{equation}
h=2^{\hf}\,(TV\,f_1+(i\pi\E)\,TV\,f_0).
\end{equation}\\

What is especially important for us in the whole construction is the question of finding and using a square root of the operator $\Delta-\frac{1}{4}$. The answer provided by operators on $L^2(\H)$ the integral kernels of which are functions of the point-pair invariant $\cosh\,d(z,w)$ leads to the spectral theory of the Laplacian, in the open space $\H$ or in other situations, including the automorphic case. There is another answer, which consists in replacing the space $L^2(\H)$ by its square, and taking the matrix operator $\begin{pmatrix}0 & I \\ -\Delta+\frac{1}{4} & 0\end{pmatrix}$. While it may first look as a joke, it is far form being one, and it is the main object of interest in \cite{lap}. Under the correspondence from pairs $(f_0,f_1)$ of functions in $\H$ to even functions $h$ in the plane, this transfers to the Euler operator $i\pi\E$ \cite[p.\,195-197]{aumod}, an operator which will be used consistently in this book.\\

The two ``square roots'' of $\Delta-\frac{1}{4}$ are totally distinct, and have totally distinct applications. The first is the useful one in analysis in $L^2(\H)$ and is by essence a non-negative self-adjoint operator. The operator $2i\pi\E$ differs from $-2i\pi\E$ and the two operators give rise to a notion close to that of ingoing and outgoing waves which is the main subject of automorphic scattering theory.\\

A more immediate link from distributions ${\mathfrak S}$ in the plane to functions in the hyperbolic half-plane $\H=\{z\in \C\colon \Re z>0\}$, originating from pseudodifferential analysis, is provided by the map $\Theta$ defined as
\begin{equation}
(\Theta {\mathfrak S})(z)=\Lan {\mathfrak S},\,\,(x,\xi)\mapsto \exp\l(-\frac{\pi\,|x-z\xi|^2}{\Im z}\r)\Ran.
\end{equation}
The correspondence so-defined between distributions in the plane and functions in $\H$ is covariant under the pair of actions of $SL(2,\R)$ in $\R^2$ and $\H$, the action of $g\in SL(2,\R)$ in $\R^2$ being associated to the corresponding linear change of coordinates, that in $\H$ being the familiar action $(\generic,\,z)\mapsto \frac{az+b}{cz+d}$. In particular, given an arithmetic subgroup of $G=SL(2,\R)$, say $\Gamma =SL(2,\Z)$, the concepts of automorphic distributions (in the plane) and
that of automorphic functions (in $\H$) correspond under $\Theta$. While essential in other investigations, this will at least justify here the name of Eisenstein distributions, linking the object to the classical concept of Eisenstein series.\\

\section{Some distributions of arithmetic interest}

In the plane, we make us of the Schwartz space ${\mathcal S}(\R^2)$ of rapidly decreasing functions of $(x,\xi)\in \R^2$, provided with its usual topology, and of the space ${\mathcal S}'(\R^2)$ of tempered distributions, to wit continuous linear forms on ${\mathcal S}(\R^2)$. One has ${\mathcal S}(\R^2)\subset L^2(\R^2)\subset {\mathcal S}'(\R^2)$. The group $G=SL(2,\R)$ acts on ${\mathcal S}(\R^2)$ by the map $(h,\,g)\mapsto h\,\circ\,g^{-1}$, and on ${\mathcal S}'(\R^2)$ by the map
$({\mathfrak S},\,g)\mapsto {\mathfrak S}\,\circ\,g^{-1}$, such that $\Lan {\mathfrak S}\,\circ\,g^{-1},\,h\Ran=\Lan {\mathfrak S},\,h\circ\,g\Ran$.\\

We shall interest ourselves mostly in automorphic distributions, by which we mean tempered distributions ${\mathfrak S}$ such that ${\mathfrak S}\,\circ\,g^{-1}={\mathfrak S}$ for $g\in \Gamma = SL(2,\Z)$. Such are the discrete measures
\begin{equation}
{\mathfrak S}_c(x,\xi)=c(0)\,\,\delta(x)\,\delta(\xi)+\sum_{\begin{array}{c} j,k\in \Z\\ |j++|k|\neq 0\end{array}} c((j,k))\,\delta(x-j)\,\delta(\xi-k),
\end{equation}
where $(j,k)=g.c.d.(j,\,k)$, a convention to follow, and $c(r)$ is any function of $r=0,1,\dots$ bounded by some power of $2+|r|$.\\

Denote as $\mu$ the M\"obius indicator function such that $\mu(r)=0$ unless $\mu$ is squarefree, in which case $\mu(r)=1$ or $-1$ according to the parity of the number of prime factors of $r$. Of special interest to us (and to zeta) are the cases when $a^{\flat}(r)=\mu(r)\,r$, and $a(r)=|a^{\flat}(r)|$. If one denotes, classically, as $\prod_p$ a product extended to the set of prime numbers, one has for $r=1,2,\dots$
\begin{equation}\label{A32}
a^{\flat}(r)=\prod_{p|r}(1-p),\qquad a(r)=\prod_{p|r}(1+p).
\end{equation}
The number $a(r)$, or $a^{\flat}(r)$, depends only on the ``squarefree version'' $r_{\bullet}$ of $r$ defined as the product of all prime factors of $r$.\\

Given $N=1,2,\dots$, the distributions
\begin{align}\label{A33}
{\mathfrak T}_N(x,\xi)&=\sum_{j,k\,\in\Z} a^{\flat}((j,k,N))\,\delta(x-j)\,\delta(\xi-k),\nonumber\\ {\mathfrak S}_N(x,\xi)&=\sum_{j,k\,\in\Z} a((j,k,N))\,\delta(x-j)\,\delta(\xi-k)
\end{align}
depend only on $N_{\bullet}$. We also denote as ${\mathfrak T}_N^{\times}$ and
${\mathfrak S}_N^{\times}$the distributions obtained from ${\mathfrak T}_N$ and ${\mathfrak S}_N$
by discarding the term $a^{\flat}(N)\,\delta(x)\delta(\xi)$, in other words by limiting the summation to all pairs of integers $j,k$ such that $|j|+|k|\neq 0$.\\

The symbol $N\nearrow \infty$ will used throughout the book to indicate that the positive integer $N$ goes to $\infty$ through a sequence such that every given squarefree number $M=1,2,\dots$ divides $N$ for $N$ large enough: in particular, $(j,k,N)\to (j,k)$ as $N\nearrow \infty$. We use on the space ${\mathcal S}'(\R^2)$ its weak topology of dual of ${\mathcal S}(\R^2)$: it suffices here to say that the convergence of a sequence in this space is equivalent to the convergence, for every $h\in {\mathcal S}(\R^2)$, of the results of testing on $h$ the elements of that sequence.
We set
\begin{align}\label{A34}
{\mathfrak T}_{\infty}&={\mathrm{lim}}_{N\nearrow \infty}{\mathfrak T}_N^{\times}=
\sum_{\begin{array}{c} j,k\in \Z \\ |j|+|k|\neq 0\end{array}}a^{\flat}((j,k))\,\delta(x-j)\,\delta(\xi-k) ,\nonumber\\
{\mathfrak S}_{\infty}&={\mathrm{lim}}_{N\nearrow \infty}{\mathfrak S}_N^{\times}=
\sum_{\begin{array}{c} j,k\in \Z \\ |j|+|k|\neq 0\end{array}}a((j,k))\,\delta(x-j)\,\delta(\xi-k) .
\end{align}\\

The concept of Eisenstein distribution, to be introduced now, is the fundamental tool in the present book.\\

\begin{definition}\label{def35}
If $\nu\in\C,\ \Re\nu>1$, the Eisenstein distribution ${\mathfrak E}_{-\nu}$ is defined
by the equation, valid for every $h\in {\mathcal S}(\R^2)$,
\begin{equation}\label{A35}
\langle\,{\mathfrak E}_{-\nu}\,,\,h\,\rangle =\sum_{|j|+|k|\neq 0}
\int_0^{\infty}t^{\nu}h(jt,\,kt)\,dt.
\end{equation}\\
\end{definition}

It is immediate that the series of integrals converges if $\Re \nu>1$, in which case ${\mathfrak E}_{-\nu}$ is well defined as a tempered distribution. Indeed, writing $|h(x,\xi)|\leq C\,(1+|x|+|\xi|)^{-A}$ with $A$ large, one has
\begin{equation}\label{A36}
\l|\int_0^{\infty}t^{\nu}h(jt,kt)\,dt\r|\leq C\,(|j|+|k|)^{-\Re\nu-1}\int_0^{\infty}t^{\Re\nu}(1+t)^{-A}dt.
\end{equation}
Obviously, ${\mathfrak E}_{-\nu}$ is $SL(2,\Z)$--invariant as a distribution, i.e., an automorphic distribution. It is homogeneous of degree $-1+\nu$, i.e., $(2i\pi\E)\,{\mathfrak E}_{-\nu}=\nu\,{\mathfrak E}_{-\nu}$: note that the transpose of $2i\pi\E$ is $-2i\pi\E$. Its name stems from its relation (not needed here: the whole developments in this chapter are made in the plane, and references to the hyperbolic half-plane would only make things more obscure) with the classical notion of non-holomorphic Eisenstein series, as made explicit in \cite[p.93]{birk18}: it is, however, a more precise concept.\\

\begin{proposition}\label{prop32}
As a tempered distribution, \ ${\mathfrak E}_{-\nu}$ extends as a meromorphic function
of $\nu\in \C$, whose only poles are the points $\nu=\pm 1$: these poles are
simple, and the residues of  \ ${\mathfrak E}_{\nu}$ there are
\begin{equation}\label{A37}
{\mathrm{Res}}_{\nu=1}\,{\mathfrak E}_{-\nu}=1\qquad {\mathrm{and}}\qquad
{\mathrm{Res}}_{\nu=-1}\,{\mathfrak E}_{-\nu}=-\delta_0,
\end{equation}
the negative of the unit mass at the origin of $\R^2$. Defining the symplectic Fourier transformation $\Fymp$ by the equation $(\Fymp {\mathfrak S})(x,\xi)\\
= \int_{\R^2}{\mathfrak S}(y,\eta)\,e^{2i\pi(x\eta-y\xi)}\,dy\,d\eta$, one has, for $\nu\neq \pm 1$, $\Fymp{\mathfrak E}_{-\nu}={\mathfrak E}_{\nu}$.\\
\end{proposition}

\begin{proof}
Denote as $\l({\mathfrak E}_{-\nu}\r)_{\mathrm{princ}}$ ({\em resp.\/}
$\l({\mathfrak E}_{-\nu}\r)_{\mathrm{res}}$) the distribution defined in the same way as ${\mathfrak E}_{-\nu}$, except for the fact that in the integral (\ref{A35}), the interval of integration $(0,\infty)$ is replaced by the interval $(0,1)$ (resp.\, $(1,\infty)$), and observe that the distribution $\l({\mathfrak E}_{-\nu}\r)_{\mathrm{res}}$ extends as an entire function of $\nu$\,: indeed, it suffices to replace (\ref{A36}) by the inequality
$(1+(|j|+|k|)\,t)^{-A}\leq C\,(1+(|j|+|k|))^{-A}(1+t)^{-A}$ if $t>1$. As a consequence of Poisson's formula, one has when  $\Re\nu>1$ the identity
\begin{multline}
\int_1^{\infty} t^{-\nu}\sum_{(j,k)\in \Z^2} \l({\mathcal F}^{\mathrm{symp}}h\r)(tk,\,tj)\,dt=
\int_1^{\infty} t^{-\nu}\sum_{(j,k)\in \Z^2} t^{-2}\,h(t^{-1}j,\,t^{-1}k)\,dt\\
=\int_0^1 t^{\nu}\sum_{(j,k)\in \Z^2}\, h(tj,\,tk)\,dt,
\end{multline}
from which one obtains that
\begin{equation}
<\,{\mathcal F}^{\mathrm{symp}}\,\l({\mathfrak E}_{\nu}\r)_{\mathrm{res}}\,,\,h>-\frac{({\mathcal
F}^{\mathrm{symp}}h)(0,\,0)}{1-\nu}=<\,\l({\mathfrak E}_{-\nu}\r)_{\mathrm{princ}}\,,\,h>+\frac{h(0,\,0)}{1+\nu}.
\end{equation}
From this identity, one finds the meromorphic continuation of the function $\nu\mapsto
{\mathfrak E}_{\nu}$, including the residues at the two poles, as well as the fact that
${\mathfrak E}_{\nu}$ and ${\mathfrak E}_{-\nu}$ are the images of each other under
${\mathcal F}^{\mathrm{symp}}$.\\
\end{proof}

So far as the zeta function is concerned, we recall its definition $\zeta(s)=\sum_{n\geq 1}n^{-s}=\prod_p(1-p^{-s})^{-1}$, valid for $\Re s>1$, and the fact that it extends as a meromorphic function in the entire complex plane, with a single simple pole, of residue $1$, at $s=1$; also that, with $\zeta^*(s)=\pi^{-\frac{s}{2}}\Gamma(\frac{s}{2})\,\zeta(s)$, one has the functional equation $\zeta^*(s)=\zeta^*(1-s)$.\\

\begin{lemma}\label{lem33}
One has if $\nu\neq \pm 1$ the Fourier expansion
\begin{equation}\label{37}
{\mathfrak E}_{-\nu}(x,\xi)=\zeta(\nu)\,|\xi|^{\nu-1}
+\zeta(1+\nu)\,|x|^{\nu}\delta(\xi)+\sum_{r\neq 0} \sigma_{-\nu}(r)\,|\xi|^{\nu-1}
\exp\l(2i\pi\frac{rx}{\xi}\r)
\end{equation}
where $\sigma_{-\nu}(r)=\sum_{1\leq d|r  } d^{-\nu}$: the first two terms must be grouped when $\nu=0$. Given $b>\eps>0$, the distribution $(\nu-1){\mathfrak E}_{-\nu}$ remains in a bounded subset of ${\mathcal S}'(\R^2)$ for $\eps\leq \Re \nu\leq b$.\\
\end{lemma}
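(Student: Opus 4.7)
The plan is to establish the Fourier expansion in the region $\Re\nu>1$, where the defining series (\ref{31}) converges absolutely, and then rely on the meromorphic continuation already proved in Proposition \ref{prop32} to justify the identity wherever $\nu\neq\pm 1$. Concretely, starting from the second form in (\ref{31}) I would split the sum according to whether $k=0$ or $k\neq 0$. The terms with $k=0,\ j\neq 0$ are handled directly: substituting $\eta=jt$ and using the evenness of $|t|^\nu$ yields
\begin{equation*}
\hf\sum_{j\neq 0}\intR |t|^\nu h(jt,0)\,dt = \Bigl(\sum_{j\geq 1}j^{-1-\nu}\Bigr)\intR |\eta|^\nu h(\eta,0)\,d\eta=\zeta(1+\nu)\,\bigl\langle |x|^\nu\delta(\xi),h\bigr\rangle,
\end{equation*}
producing the second term in (\ref{37}).

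Next, for $k\neq 0$ I would change variables $\xi=kt$, treating $k>0$ and $k<0$ separately and then reflecting $j\mapsto -j$ in the second case, in order to unify the two contributions into $\sum_{k\geq 1}k^{-1-\nu}\sum_{j\in\Z}\intR|\xi|^\nu h(j\xi/k,\xi)\,d\xi$. Then I would apply Poisson summation in $j$ at fixed $\xi\neq 0$ and $k\geq 1$: with $\mathcal F_1 h$ the partial Fourier transform in the first variable, $\sum_j h(j\xi/k,\xi)=(k/|\xi|)\sum_m(\mathcal F_1 h)(mk/\xi,\xi)$. The factor $k/|\xi|$ reduces the power $k^{-1-\nu}$ to $k^{-\nu}$ and the power $|\xi|^\nu$ to $|\xi|^{\nu-1}$. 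The $m=0$ contribution sums to $\zeta(\nu)\intR|\xi|^{\nu-1}(\mathcal F_1 h)(0,\xi)\,d\xi=\langle\zeta(\nu)|\xi|^{\nu-1},h\rangle$, the first term of (\ref{37}). For $m\neq 0$ I would reparameterize by $r=mk$: pairs $(m,k)$ with $m\in\Z\setminus\{0\}$, $k\geq 1$, $mk=r$ are indexed by the positive divisors $k$ of $|r|$, and each pair contributes the same Fourier factor $(\mathcal F_1 h)(r/\xi,\xi)$, so the divisor sum collapses to $\sigma_{-\nu}(|r|)$. Changing $r\mapsto -r$ under the sum (permissible since $\sigma_{-\nu}(|r|)$ is even) yields the series $\sum_{r\neq 0}\sigma_{-\nu}(r)|\xi|^{\nu-1}e^{2i\pi rx/\xi}$ paired with $h$.

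For the uniformity statement, I would note that in the strip $\eps\leq\Re\nu\leq a$ the three pieces of (\ref{37}) are bounded in ${\mathcal S}'(\R^2)$ uniformly up to the pole at $\nu=1$ coming from $\zeta(\nu)$ in the first term: the second term involves $\zeta(1+\nu)$, which is holomorphic on $\Re\nu\geq\eps$, and for the Fourier series one has the elementary estimate $|\sigma_{-\nu}(r)|\leq\sigma_{\max(0,-\Re\nu)}(|r|)=O(|r|^{\max(0,-\Re\nu)+\delta})$, which permits the series to be tested against any Schwartz function with a bound polynomial in $r$ and local-uniform in $\nu$. Multiplying by $(\nu-1)$ kills the only remaining singularity, and Proposition \ref{prop32} guarantees that the pole at $\nu=-1$ is excluded by $\Re\nu\geq\eps>0$.

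The main delicate point is the Poisson step: it is carried out at fixed $\xi\neq 0$, so the resulting Fourier series only represents the restriction of ${\mathfrak E}_{-\nu}$ to the open set $\{\xi\neq 0\}$; the singular behaviour at $\xi=0$ is captured separately by the $|x|^\nu\delta(\xi)$ term arising from $k=0$. One must therefore justify that the pairing against an arbitrary $h\in\mathcal S(\R^2)$ genuinely decomposes as the sum of the three pieces, which is legitimate because, for $\Re\nu>1$, Fubini applies to the absolutely convergent double sum-integral and the Poisson identity holds pointwise in $\xi\neq 0$; the excluded hyperplane $\{\xi=0\}$ is negligible against $|\xi|^\nu h(x,\xi)$ when $\Re\nu>0$. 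The extension to all $\nu\neq\pm 1$ then follows from meromorphic continuation of both sides.
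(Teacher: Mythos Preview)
Your proof is correct and follows essentially the same route as the paper: isolate the $k=0$ terms to produce $\zeta(1+\nu)|x|^\nu\delta(\xi)$, apply Poisson summation in the $j$-variable for $k\neq 0$, separate the zero Fourier mode to get $\zeta(\nu)|\xi|^{\nu-1}$, and regroup the remaining terms by $r=mk$ to obtain the divisor-sum coefficients $\sigma_{-\nu}(r)$. The only place where the paper is more explicit than your sketch is the boundedness of the oscillatory series: rather than appealing to a vague ``bound polynomial in $r$'', the paper uses the identity $(1+\xi\,\partial_x)\exp(2i\pi rx/\xi)=(1+2i\pi r)\exp(2i\pi rx/\xi)$ and repeated integration by parts to gain arbitrary powers of $(1+|r|)^{-1}$, which is the concrete mechanism making the sum over $r$ converge uniformly on the strip.
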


\begin{proof}
Isolating the term for which $k=0$ in (\ref{A35}), we write if $\Re\nu>1$, after a change of variable,
\begin{multline}
{}\Lan {\mathfrak E}_{-\nu},\,h\Ran=\zeta(1+\nu)\intR |t|^{\nu}h(t,0)\,dt+\hf \sum_{j\in\Z,\,k\neq 0} \intR |t|^{\nu}h(jt,\,kt)\,dt\\
=\zeta(1+\nu)\intR |t|^{\nu}h(t,0)\,dt
+\hf\sum_{j\in\Z,\,k\neq 0} \intR |t|^{\nu-1}\l({\mathcal F}_1^{-1}h\r)\l(\frac{j}{t},\,kt\r) dt,
\end{multline}
where we have used Poisson's formula at the end and denoted as ${\mathcal F}_1^{-1}h$
the inverse Fourier transform of $h$ with respect to the first variable. Isolating now the term such that $j=0$, we obtain
\begin{multline}
\hf\sum_{j\in\Z,\,k\neq 0} \intR |t|^{\nu-1}\l({\mathcal F}_1^{-1}h\r)\l(\frac{j}{t},\,kt\r) dt=\zeta(\nu)\intR |t|^{\nu-1}\l({\mathcal F}_1^{-1}h\r)(0,\,t)\,dt\\
+\hf \sum_{jk\neq 0}\intR |t|^{\nu-1}\l({\mathcal F}_1^{-1}h\r)\l(\frac{j}{t},\,kt\r) dt,
\end{multline}
from which the main part of the lemma follows if $\Re\nu>1$ after we have made the change of variable $t\mapsto \frac{t}{k}$ in the main term. The continuation of the identity uses also the fact that,
thanks to the trivial zeros $-2,-4,\dots$ of zeta, the product $\zeta(\nu)\,|t|^{\nu-1}$ is regular at $\nu=-2,-4,\dots$ and the product $\zeta(1+\nu)\,|t|^{\nu}$ is regular at $\nu=-3,-5,\dots$, . That the sum $\zeta(\nu)\,|\xi|^{\nu-1}+\zeta(1+\nu)\,|x|^{\nu}\delta(\xi)$ is regular at $\nu=0$ follows from the facts that $\zeta(0)=-\hf$ and that the residue at $\nu=0$ of the distribution $|\xi|^{\nu-1}=\frac{1}{\nu}\,\frac{d}{d\xi}\l(|x|^{\nu}{\mathrm{sign}}\,\xi\r)$ is
$\frac{d}{d\xi}\,{\mathrm{sign}}\,\xi=2\delta(\xi)$.\\

The second assertion is a consequence of the Fourier expansion. The factor $(\nu-1)$ has been inserted so as to kill the pole of ${\mathfrak E}_{-\nu}$, or that of $\zeta(\nu)$, at $\nu=1$. Bounds for the first two terms of the right-hand side of (\ref{37}) are obtained from rough bounds for the zeta factors and integrations by parts associated to powers of the operator $\xi\frac{\partial}{\partial \xi}$ or $x\frac{\partial}{\partial x}$. For the main terms, we use the integration by parts associated to the equation
\begin{equation}
\l(1+\xi\frac{\partial}{\partial x}\r)\exp\l(2i\pi\frac{rx}{\xi}\r)=(1+2i\pi r)\,
\exp\l(2i\pi\frac{rx}{\xi}\r).
\end{equation}\\
\end{proof}

Decompositions into homogeneous components of functions or distributions in the plane will be ever-present. Any function $h\in {\mathcal S}(\R^2)$ can be decomposed in $\R^2\backslash\{0\}$ into homogeneous components according to the equations, in which $c>-1$,
\begin{equation}\label{310}
h=\frac{1}{i}\int_{\Re\nu=c}h_{\nu}\,d\nu,\qquad h_{\nu}(x,\xi)=\frac{1}{2\pi}\int_0^{\infty}t^{\nu}h(tx,t\xi)\,dt.
\end{equation}
Indeed, the integral defining $h_{\nu}(x,\xi)$ is convergent for $|x|+|\xi|\neq 0,\Re\nu>-1$, and the function $h_{\nu}$ so defined is $C^{\infty}$ in $\R^2\backslash\{0\}$ and homogeneous of degree $-1-\nu$; it is also analytic with respect to $\nu$. Using twice the integration by parts associated to Euler's equation $-(1+\nu)\,h_{\nu}=\l(x\frac{\partial}{\partial x}+\xi\frac{\partial}{\partial \xi}\r)h_{\nu}(x,\xi)$, one sees that the integral $\frac{1}{i}\int_{\Re\nu=c}h_{\nu}(x,\xi)\,d\nu$ is convergent for $c>-1$: its value does not depend on $c$. Taking $c=0$ and setting $t=e^{2\pi \tau}$, one has for $|x|+|\xi|\neq 0$
\begin{equation}
h_{i\lambda}(x,\xi)=\intR e^{2i\pi \tau\lambda} \,.\,\,e^{2\pi \tau}h(e^{2\pi\tau}x,e^{2\pi\tau}\xi)\,d\tau,
\end{equation}
and the Fourier inversion formula shows that $\intR h_{i\lambda}(x,\xi)\,d\lambda=h(x,\xi)$: this proves (\ref{310}).\\

As a consequence, some automorphic distributions of interest (not all: so-called Hecke distributions are needed too in general, as will be explained in Chapter 2) can be decomposed into Eisenstein distributions. A basic one is the  ``Dirac comb''
\begin{equation}\label{312}
{\mathfrak D}(x,\xi)=2\pi \sum_{|j|+|k|\neq 0} \delta(x-j)\,\delta(\xi-k)
=2\pi\,\l[{\mathcal Dir}(x,\xi)-\delta(x)\delta(\xi)\r]
\end{equation}
where, as found convenient in some algebraic calculations, one introduces also the ``complete'' Dirac comb \,${\mathcal Dir}(x,\xi)=\sum_{j,k\in \Z}\delta(x-j)\delta(\xi-k)$.\\

Noting the inequality $\l|\int_0^{\infty} t^{\nu}h(tx,t\xi)\,dt\r|\leq C\,(|x|+|\xi|)^{-\Re\nu-1}$, one obtains if $h\in{\mathcal S}(\R^2)$ and $c>1$, pairing (\ref{312}) with (\ref{310}), the identity
\begin{equation}
{}\Lan {\mathfrak D},\,h\Ran=\frac{1}{i}\sum_{|j|+|k|\neq 0} \int_{\Re\nu=c} d\nu\int_0^{\infty}t^{\nu}h(tj,tk)\,dt.
\end{equation}
It follows from (\ref{A35}) that, for $c>1$,
\begin{equation}\label{314}
{\mathfrak D}=\frac{1}{i}\int_{\Re\nu=c} {\mathfrak E}_{-\nu}\,d\nu=2\pi+\frac{1}{i}\int_{\Re\nu=0} {\mathfrak E}_{-\nu}\,d\nu,
\end{equation}
the second equation being a consequence of the first in view of (\ref{A37}).\\

Integral superpositions of Eisenstein distributions, such as the one in (\ref{314}), are to be interpreted in the weak sense in ${\mathcal S}'(\R^2)$, i.e., they make sense when tested on arbitrary functions in ${\mathcal S}(\R^2)$. Of course, pole-chasing is essential when changing contours of integration. But, as long as one satisfies oneself with weak decompositions in ${\mathcal S}'(\R^2)$, no difficulty concerning the integrability with respect to $\Im \nu$ on the line ever occurs, because of the last assertion of Lemma \ref{lem33} and of the identities
\begin{equation}\label{315}
(b-\nu)^A<{\mathfrak E}_{-\nu},W>=<(b-2i\pi\E)^A{\mathfrak E}_{-\nu},W>=
<{\mathfrak E}_{-\nu},\,(b+2i\pi\E)^AW>,
\end{equation}
in which $A=0,1,\dots$ may be chosen arbitrarily large and $b$ is arbitrary.\\

\begin{theorem}
For any squarefree integer $N\geq 1$, defining
\begin{equation}\label{43}
\zeta_N(s)=\prod_{p|N}(1-p^{-s})^{-1},\qquad {\mathrm{so\,\,that\,\,\,}} \frac{1}{\zeta_N(s)}=\sum_{1\leq T|N}\mu(T)\,T^{-s},
\end{equation}
one has
\begin{equation}\label{44}
{\mathfrak T}_N^{\times}=\frac{1}{2i\pi}\int_{\Re \nu=c}
\frac{1}{\zeta_N(\nu)}\,{\mathfrak E}_{-\nu}\,d\nu,\qquad c>1.
\end{equation}
Taking the weak limit as $N\nearrow \infty$, one has
\begin{equation}\label{45}
{\mathfrak T}_{\infty}=\frac{1}{2i\pi}\int_{\Re \nu=c} \frac{{\mathfrak
E}_{-\nu}}{\zeta(\nu)}\,d\nu,\qquad c\geq 1.
\end{equation}
On the other hand,
\begin{equation}
{\mathfrak S}_{\infty}=\frac{1}{2i\pi}\int_{\Re \nu=c} \frac{\zeta(\nu)}{\zeta(2\nu)}\,{\mathfrak
E}_{-\nu}\,d\nu,\qquad c> 1.
\end{equation}\\
\end{theorem}

\begin{proof}
Using the equation $T^{-x\frac{d}{dx}}\delta(x-j)=\delta\l(\frac{x}{T}-j\r)
=T\,\delta(x-Tj)$, one has with ${\mathfrak D}$ as introduced in (\ref{312})
\begin{align}
\frac{1}{2\pi} \prod_{p|N}\l(1-p^{-2i\pi\E}\r){\mathfrak
D}(x,\xi)&=\sum_{T|N}\mu(T)\,T^{-2i\pi\E}\sum_{|j|+|k|\neq 0} \delta(x-j)\,\delta(\xi-k)\nonumber\\
&=\sum_{T|N}\mu(T)\,T\,\sum_{|j|+|k|\neq 0}\delta(x-Tj)\,\delta(\xi-Tk)\nonumber\\
&=\sum_{T|N}\mu(T)\,T\,\sum_{\begin{array}{c}|j|+|k|\neq 0 \\ j\equiv k \equiv 0\mm T\end{array}}
\delta(x-j)\,\delta(\xi-k)\nonumber\\
&=\sum_{\begin{array}{c}|j|+|k|\neq 0 \\T|(N,j,k)\end{array}}
\mu(T)\,T\,\delta(x-j)\,\delta(\xi-k).
\end{align}
Since $\sum_{T|(N,j,k)}\mu(T)\,T=\prod_{p|(N,j,k)}\l(1-p\r)=
a^{\flat}((N,j,k))$, one obtains
\begin{equation}\label{47}
\frac{1}{2\pi} \prod_{p|N}\l(1-p^{-2i\pi\E}\r){\mathfrak D}(x,\xi)={\mathfrak
T}_N^{\times}(x,\xi).
\end{equation}
One has
\begin{multline}
\l[{\mathfrak T}_N-{\mathfrak T}_N^{\times}\r](x,\xi)=a^{\flat}(N)\,\delta(x)\delta(\xi)\\
=\delta(x)\delta(\xi)\,\prod_{p|N}(1-p)=
\prod_{p|N}\l(1-p^{-2i\pi\E}\r)\l(\delta(x)\delta(\xi)\r),
\end{multline}
so that, adding the last two equations,
\begin{equation}\label{49}
{\mathfrak T}_N=\prod_{p|N}\l(1-p^{-2i\pi\E}\r)\,{\mathcal Dir}.
\end{equation}\\

Combining (\ref{47}) with (\ref{314}) and with $(2i\pi\E){\mathfrak E}_{-\nu}=\nu\,{\mathfrak E}_{-\nu}$, one obtains if $c>1$
\begin{align}
{\mathfrak T}_N^{\times}&= \prod_{p|N}\l(1-p^{-2i\pi\E}\r)
\l[\frac{1}{2i\pi}\int_{\Re\nu=c} {\mathfrak E}_{-\nu}\,d\nu\r]\nonumber\\
&= \frac{1}{2i\pi}\int_{\Re\nu=c} {\mathfrak E}_{-\nu}\,
\prod_{p|N}\l(1-p^{-\nu}\r)\,d\nu,
\end{align}
which is just (\ref{44}). The $d\nu$-summability is guaranteed by (\ref{315}).
Equation (\ref{45}) follows as well, taking the limit as $N\nearrow \infty$. Recall also (\ref{315}).\\

In view of Hadamard's theorem, according to which $\zeta(s)$ has no zero on the line $\Re s=1$, to be completed by the estimate $\zeta(1+iy)={\mathrm{O}}(\log\,|y|)$, observing also that the pole of $\zeta(\nu)$ at $\nu=1$ kills that of ${\mathfrak E}_{-\nu}$ there, one can in (\ref{45}) replace the condition $c>1$ by $c\geq 1$.\\

So far as ${\mathfrak S}_N^{\times}$ or ${\mathfrak S}_{\infty}$ is concerned, it suffices to replace (\ref{43}) by
\begin{equation}\label{410}
\prod_{p|N}(1+p^{-s})^{-1}=\prod_{p|N} \frac{\l(1-p^{-s}\r)^{-1}}{\l(1-p^{-2s}\r)^{-1}}=\frac{\zeta_N(s)}{\zeta_N(2s)}.
\end{equation}
\end{proof}

\noindent
{\em Remark\/} 1.2.1. In (\ref{45}), introducing a sum of residues over all zeros of zeta with a real part above some large negative number, one can replace the line $\Re\nu=c$ with $c>1$ by a line $\Re\nu=c'$ with $c'$ as negative as desired: one cannot go further in the distribution sense. But \cite[Theor.\,3.2.2,\,Theor.\,3.2.4]{birk18}, one can get rid of the integral if one agrees to interpret the identity in the sense of a certain analytic functional. Then, all zeros of zeta, non-trivial and trivial alike, enter the formula:
the ``trivial'' part
\begin{equation}
{\mathfrak R}_{\infty}=2\sum_{n\geq 0} \frac{(-1)^{n+1}}{(n+1)\,!}\,\frac{\pi^{\frac{5}{2}+2n}}{\Gamma(\frac{3}{2}+n)\zeta(3+2n)}\,
{\mathfrak E}_{2n+2}
\end{equation}
has a closed expression \cite[p.22]{birk18} as a series of line measures. This will not be needed in the sequel.\\

For reasons having to do with the Weyl symbolic calculus of operators, to be introduced in the next
section, we shall use preferably the distributions
\begin{align}\label{415}
{\mathfrak T}_{\frac{\infty}{2}}(x,\xi)&=\sum_{|j|+|k|\neq 0} a^{\flat}\big(\big(j,k,\frac{\infty}{2}\big)\big)\,\delta(x-j)\delta(\xi-k),\nonumber\\
{\mathfrak S}_{\frac{\infty}{2}}(x,\xi)&=\sum_{|j|+|k|\neq 0} a\big(\big(j,k,\frac{\infty}{2}\big)\big)\,\delta(x-j)\delta(\xi-k),
\end{align}
where $a\big(\big(j,k,\frac{\infty}{2}\big)\big)$ \,(resp.\,$a\big(\big(j,k,\frac{\infty}{2}\big)\big)$) is the product of all factors $1-p$ (resp.\,$1+p$)\,, with $p$ prime $\neq 2$ dividing $(j,k)$. Since $\prod_{p\neq 2}(1-p^{-\nu})=
\frac{(1-2^{-\nu})^{1}}{\zeta(\nu)}$ and $\prod_{p\neq 2}(1+p^{-\nu})=\frac{(1+2^{-\nu})^{1}}\,
\frac{\zeta(\nu}{\zeta(2\nu)}$, one has for $c>1$
\begin{align}\label{416}
{\mathfrak T}_{\frac{\infty}{2}}&=\frac{1}{2i\pi}\int_{\Re\nu=c}
\l(1-2^{-\nu}\r)^{-1}\frac{{\mathfrak E}_{-\nu}}{\zeta(\nu)}\,d\nu,\nonumber\\
{\mathfrak S}_{\frac{\infty}{2}}&=\frac{1}{2i\pi}\int_{\Re\nu=c}
\l(1+2^{-\nu}\r)^{-1}{\mathfrak E}_{-\nu}\,\frac{\zeta(\nu)}{\zeta(2\nu)}\,d\nu.
\end{align}\\

The version of (\ref{413}) below we shall use, under the same support conditions about $v,u$, is
\begin{equation}\label{417}
\l(v\,\big|\,\Psi\l(Q^{2i\pi\E}{\mathfrak T}_{\frac{\infty}{2}}\r)u\r)=
\l(v\,\big|\,\Psi\l(Q^{2i\pi\E}{\mathfrak T}_N\r)u\r)
\end{equation}
if $N$ and $Q$ are squarefree odd and $N$ is divisible by all odd primes $<\beta Q$.\\

\section{The Weyl calculus of operators and pseudodifferential arithmetic}

In space-momentum coordinates, the Weyl calculus, or pseudodifferential calculus, depends on one free parameter $h$ with the dimension of action, called Planck's constant. The traditional definition of the operator $\Psi_h({\mathfrak S})$ associated to a function, or distribution ${\mathfrak S}$ in the plane, is by means of the equation
\begin{equation}\label{21}
(\Psi_h({\mathfrak S})\,u)(x)=\frac{1}{h}\,\int_{\R^2} {\mathfrak S}\l(\frac{x+y}{2},\xi\r)\,e^{\frac{i\pi(x-y)\xi}{h}}\,u(y)\,d\xi\,dy.
\end{equation}
In pure mathematics, even the more so when pseudodifferential analysis is applied to arithmetic, Planck's constant becomes a pure number: there is no question that the good such constant in ``pseudodifferential arithmetic'' is $2$, as especially put into evidence \cite[Chapter\,6]{birk18} in the pseudodifferential calculus of operators with automorphic symbols. We simplify as $\Psi$ the rule just introduced as $\Psi_2$ (denoted as ${\mathrm{Op}}_2$ in \cite[(2.1.1)]{birk18}).\\

We need to use for ``symbols'' ${\mathfrak S}$ elements of ${\mathcal S}'(\R^2)$. The convergence of the integral (\ref{21}), even in a weak sense, to wit after insertion under the integral of a factor $\overline{v}(x)\,dx$ with $v \in {\mathcal S}(\R)$, does not hold in general. However, with
\begin{equation}
(v\,|\,u)=\intR\overline{v}(x)\,u(x)\,dx
\end{equation}
for $v,u\in {\mathcal S}(\R)$ (note that this hermitian form is antilinear with respect to the factor on the left), one can use for the operator $\Psi({\mathfrak S})$ with symbol ${\mathfrak S}$ the weak definition
\begin{equation}
(v\,|\,\Psi({\mathfrak S})\,u)=\int_{\R^2} K(x,\,y)\,\overline{v}(x)\,u(y)\,dx\,dy,
\end{equation}
where the integral kernel $K(x,\,y)$ is the function
\begin{equation}\label{22}
K(x,y)=\hf\,\l({\mathcal F}_2^{-1}{\mathfrak S}\r)\l(\frac{x+y}{2},\,\frac{x-y}{2}\r),
\end{equation}
where ${\mathcal F}_2^{-1}$ denotes the inverse Fourier transformation with respect to the second variable.\\

Elementary cases of operators $\Psi({\mathfrak S})$ are the following. If ${\mathfrak S}(x,\xi)=f(x)$, the operator $\Psi({\mathfrak S})$ is the operator of multiplication by the function $f$. If ${\mathfrak S}(x,\xi)=\delta(x)\,g(\xi)$, one has $[\Psi({\mathfrak S})\,u](x)=({\mathcal F}^{-1}g)(x)\,u(-x)$.\\

If one defines the Wigner function $\Wig(v,u)$ of two functions in ${\mathcal S}(\R)$ as the function in  ${\mathcal S}(\R^2)$ such that
\begin{equation}\label{23}
\Wig(v,u)(x,\xi)=\intR \overline{v}(x+t)\,u(x-t)\,e^{2i\pi \xi t}dt,
\end{equation}
one has
\begin{equation}\label{24}
\l(v\,\bigr|\,\Psi({\mathfrak S})\,u\r)=\Lan {\mathfrak S},\,\Wig(v,u)\Ran.
\end{equation}
Yes, $\Wig(v,u)$ lies in ${\mathcal S}(\R^2)$ if $v$ and $u$ lie in ${\mathcal S}(\R)$:
the rapid decrease with respect to $\xi$ is obtained with the help of the integration by parts associated to the equation  $\xi\,e^{2i\pi \xi t}=\frac{1}{2i\pi}\,\frac{d}{dt}\,
e^{2i\pi \xi t}$. Immediately observe for future reference that if $v$ and $u$ are compactly supported, one can have $\Wig(v,u)(x,\xi)\neq 0$ only if $2x$ lies in the algebraic sum of the supports of $v$ and $u$.\\

If $v$ and $u$ have compact supports, the sequence $(v\,|\,\Psi({\mathfrak T}_N)\,u)$ is stationary as $N\nearrow \infty$ and one can write for $N$ ``sufficiently divisible''
\begin{equation}\label{412}
\l(v\,\big|\,\Psi({\mathfrak T}_{\infty})\,u\r)=\sum_{j,k \in \Z} a^{\flat}(N,\,j,\,k))\,
\Wig(v,\,u)(j,\,k).
\end{equation}\\

The following reduction, under some support conditions, of ${\mathfrak T}_{\infty}$ to ${\mathfrak T}_N$, is immediate, and fundamental for our purpose. Assume that $v$ and $u\in C^{\infty}(\R)$ are such that the algebraic sum of the supports of $v$ and $u$ is contained in $[0,2\beta]$. Then, given a squarefree integer $N=RQ$ (with $R,Q$ integers) divisible by all primes $<\beta Q$, one has
\begin{equation}\label{413}
\l(v\,\big|\,\Psi\l(Q^{2i\pi\E}{\mathfrak T}_{\infty}\r)u\r)=
\l(v\,\big|\,\Psi\l(Q^{2i\pi\E}{\mathfrak T}_N\r)u\r).
\end{equation}
Indeed, (\ref{A34}) and the fact that the transpose of $2i\pi\E$ is $-2i\pi\E$ yield
\begin{equation}\label{24third}
\l(v\,\big|\,\Psi\l(Q^{2i\pi\E}{\mathfrak T}_{\infty}\r)u\r)=Q^{-1}\sum_{j,k}a^{\flat}((j,k))\,\Wig(v,u)\l(\frac{j}{Q},\,\frac{k}{Q}\r).
\end{equation}
Next, from the observation that follows (\ref{24}), one has $0<\frac{j}{Q}<\beta$, or $0<j<\beta Q$, for all nonzero terms of this sum, which implies that all prime divisors of $j$ divide $N$, so that $a^{\flat}((j,k,N))=a^{\flat}((j,k))$. Note also that using here ${\mathfrak T}_N$ or ${\mathfrak T}_N^{\times}$ would not make any difference since $\Psi(\delta_0)\,u=\overset{\vee}{u}$ and the interiors of the supports of $v$ and $\overset{\vee}{u}$ do not intersect.\\

Given two functions in ${\mathcal S}(\R)$, the Wigner function of their Fourier transforms is
\begin{equation}\label{24half}
\Wig(\widehat{v},\,\widehat{u})(x,\xi)=\Wig(v,\,u)(\xi,\,-x).
\end{equation}
An incorrect proof of (\ref{24half}) (to make it correct, just use the definition of the Fourier transforms one at a time) goes as follows. The left-hand side of (\ref{24half}) is
\begin{multline}
\int_{\R^4} \overline{v}(r)\,u(s)\,\exp[2i\pi\,((x+t)r-(x-t)s)]\,e^{2i\pi t\xi}\,dr\,ds\,dt\\
=\int_{\R^2} \overline{v}(r)\,u(s)\,e^{2i\pi\,x(r-s)}\,\delta(r+s+\xi)\,dr\,ds,
\end{multline}
and setting $r+s=\xi,\,r-s=-\eta$, one obtains (\ref{24half}).\\

Another useful property of the calculus $\Psi$ is expressed by the following two equivalent identities, obtained with the help of elementary manipulations of the Fourier transformation or with that of (\ref{22}),
\begin{equation}\label{25}
\Psi\l(\Fymp{\mathfrak S}\r)\,w=
\Psi({\mathfrak S})\,\overset{\vee}{w},\qquad \Fymp\Wig(v,\,u)=\Wig(v,\,\overset{\vee}{u}),
\end{equation}
where $\overset{\vee}{w}(x)=w(-x)$ and the symplectic Fourier transformation in $\R^2$
is defined in ${\mathcal S}(\R^2)$ or ${\mathcal S}'(\R^2)$ by the equation
\begin{equation}\label{26}
\l(\Fymp\,{\mathfrak S}\r)(x,\xi)=\int_{\R^2} {\mathfrak S}(y,\eta)\,e^{2i\pi(x\eta-y\xi)}\,dy\,d\eta.
\end{equation}\\

Recall the definition $2i\pi\E=1+x\frac{\partial}{\partial x}+\xi\frac{\partial}{\partial \xi}$ of the
Euler operator and define, for $t>0$, the operator $t^{2i\pi\E}$ such that $\l(t^{2i\pi\E}{\mathfrak S}\r)(x,\xi)=t\,{\mathfrak S}(tx,t\xi)$. In the space $L^2(\R^2)$, the operator $\E$ is essentially self-adjoint and, according to Stone's theorem, this formula would be a theorem rather than a definition. But (this is a piece of good news), we shall never have to use in the plane spaces of functions or distributions besides ${\mathcal S}(\R^2)$ and ${\mathcal S}'(\R^2)$; the same holds in the case of one-dimensional functions.\\

The following lemma gives the operator $2i\pi\E$ a role in pseudodifferential analysis: simultaneous rescalings of $x,\xi$ by the same factor do not correspond under $\Psi$ to a pair of unitary transformations of the one-dimensional functions $v,\,u$, in contrast to the linear transformations associated to matrices in $SL(2,\R)$.\\

\begin{lemma}\label{lem21}
Given $v,u\in {\mathcal S}(\R)$, one has
\begin{equation}\label{29}
(2i\pi\E)\,{\mathrm{Wig}}(v,\,u)={\mathrm{Wig}}(v',\,xu)+{\mathrm{Wig}}(xv,\,u').
\end{equation}
If one introduces the (Heisenberg) operators $Q$ and $P$ on functions of $t\in \R$ defined as $(Qu)(t)=t\,u(t)$ and $(Pu)(t)=\frac{1}{2i\pi}\,u'(t)$, this is equivalent to the general identity
\begin{equation}\label{29b}
P\,\Psi({\mathfrak S})\,Q-Q\,\Psi({\mathfrak S})\,P=\Psi(\E\,{\mathfrak S}).
\end{equation}\\

\end{lemma}

\begin{proof}
One has $\xi\frac{\partial}{\partial \xi}\,e^{2i\pi t\xi}=t\,\frac{\partial}{\partial t}\,e^{2i\pi t\xi}$. The term $\xi\frac{\partial}{\partial \xi}\,{\mathrm{Wig}}(v,\,u)$ is obtained from (\ref{23}) with the help of an integration by parts, noting that the transpose of the operator $t\,\frac{\partial}{\partial t}$ is $-1-t\,\frac{\partial}{\partial t}$. Overall, using (\ref{22}),
one obtains
\begin{equation}
\l[(2i\pi\E)\,{\mathrm{Wig}}(v,\,u)\r](x,\,\xi)=\intR A(x,\,t)\,e^{2i\pi t\xi}\,dt,
\end{equation}
with
\begin{align}
A(x,\,t)&=x\,[\overline{v'}(x+t)\,u(x-t)+\overline{v}(x+t)\,u'(x-t)]\nonumber\\
&+t\,[-\overline{v'}(x+t)\,u(x-t)+\overline{v}(x+t)\,u'(x-t)]\nonumber\\
&=(x-t)\,\overline{v'}(x+t)\,u(x-t)+(x+t)\,\overline{v}(x+t)\,u'(x-t).
\end{align}
The equation (\ref{29b}) is an immediate reformulation of (\ref{29}). While of limited interest in the present book, the identity (\ref{29b}) is decisive in our proof (arXiv:2001.10956) of the Ramanujan-Petersson conjecture for Maass forms.\\

Note that if one defines the ``mixed-adjoint'' operator ${\mathrm{mad}}(P\,\wedge\,Q)$
by the equation ${\mathrm{mad}}(P\,\wedge\,Q)\,A=P\,A\,Q-Q\,A\,P$ for every continuous linear operator from ${\mathcal S}(\R)$ to ${\mathcal S}'(\R)$, the exponentiated version $h^{-i\pi\,{\mathrm{mad}}(P\,\wedge\,Q)}$, with $h>0$, corresponds exactly, in the Weyl calculus, to a change of Planck's constant.\\
\end{proof}

The following theorem, especially (\ref{317}), is the main tool in all that follows.\\

\begin{theorem}\label{theo34}
For every tempered distribution ${\mathfrak S}$ such that ${\mathfrak S}\,\circ\,\sm{1}{0}{1}{1}={\mathfrak S}$, especially every automorphic distribution, the integral kernel of the operator $\Psi({\mathfrak S})$ is supported in the set of points $(x,y)$ such that $x^2-y^2\in 4\Z$. In particular, given $\nu\in \C, \,\nu\neq \pm 1$, one has for every pair $v,u$ in ${\mathcal S}(\R)$,
\begin{multline}\label{315half}
\l(v\,\bigr|\,\Psi\l({\mathfrak E}_{-\nu}\r)u\r)= \zeta(\nu)\,\l(|x|^{\nu-1}v\,\big|\,u\r) +\zeta(-\nu)\,\l(|x|^{-\nu-1}v\,\big|\,\overset{\vee}{u}\r)\\
+\sum_{r\neq 0}\sigma_{-\nu}(r)\,\intR \overline{v}\l(t+\frac{r}{t}\r)\,|t|^{\nu-1}\,u\l(t-\frac{r}{t}\r) dt.
\end{multline}
Under the support condition that $x>0$ and $0<x^2-y^2<8$ if $v(x)u(y)\neq 0$, one has
\begin{equation}\label{317}
\l(v\,\bigr|\,\Psi\l({\mathfrak E}_{-\nu}\r)u\r)= \int_0^{\infty} \overline{v}\l(t+t^{-1}\r) \,t^{\nu-1}\, u\l(t-t^{-1}\r) dt.
\end{equation}\\
\end{theorem}

\begin{proof}
The invariance of ${\mathfrak S}$ under the change $(x,\xi)\mapsto (x,\xi+x)$ implies that the distribution $\l({\mathcal F}_2^{-1}{\mathfrak S}\r)(x,z)$ is invariant under the multiplication by $e^{2i\pi xz}$. Applying (\ref{22}), we obtain the first assertion.\\

Let us use the expansion (\ref{37}), but only after we have substituted the pair $(\xi,-x)$ for $(x,\xi)$, which does not change ${\mathfrak E}_{-\nu}(x,\xi)$ for any $\nu\neq \pm 1$ in view of (\ref{A35}): hence,
\begin{equation}\label{318}
{\mathfrak E}_{-\nu}(x,\xi)=\zeta(\nu)\,|x|^{\nu-1}+\zeta(1+\nu)\,\delta(x)\,|\xi|^{\nu}
+\sum_{r\neq 0}\sigma_{-\nu}(r)\,|x|^{\nu-1}\exp\l(-2i\pi\frac{r\xi}{x}\r).
\end{equation}
The contributions to $\l(v\,\bigr|\,\Psi\l({\mathfrak E}_{-\nu}\r)u\r)$ of the first two terms of this expansion are obtained by what immediately followed (\ref{22}). The sum of terms of the series for $r\neq 0$, to be designated as ${\mathfrak E}_{-\nu}^{\mathrm{trunc}}(x,\xi)$, remains to be examined.\\

One has
\begin{equation}\label{322}
\l({\mathcal F}_2^{-1}{\mathfrak E}_{-\nu}^{\mathrm{trunc}}\r)(x,z)=
\sum_{r\neq 0}\sigma_{-\nu}(r)\,|x|^{\nu-1}\delta\l(z-\frac{r}{x}\r).
\end{equation}
Still using (\ref{22}), the integral kernel of the operator $\Psi({\mathfrak S}_r)$, with
\begin{equation}
{\mathfrak S}_r(x,\xi)\colon =|x|^{\nu-1}\exp\l(-2i\pi\frac{r\xi}{x}\r)={\mathfrak T}_r\l(x,\frac{\xi}{x}\r),
\end{equation}
is half of
\begin{align}\label{320}
2\,K_r(x,y)
&=\l({\mathcal F}_2^{-1}{\mathfrak S}_r\r)\l(\frac{x+y}{2},\frac{x-y}{2}\r)
=\l|\frac{x+y}{2}\r|\,\l({\mathcal F}_2^{-1}{\mathfrak S}_r\r)\l(\frac{x+y}{2},\,\frac{x^2-y^2}{4}\r)\nonumber\\
&=\l|\frac{x+y}{2}\r|^{\nu}\delta\l(\frac{x^2-y^2}{4}-r\r)=\l|\frac{x+y}{2}\r|^{\nu-1}
\delta\l(\frac{x-y}{2}-\frac{2r}{x+y}\r).
\end{align}
Making in the integral $\int_{\R^2}K_r(x,y)\,\overline{v}(x)\,u(y)\,dx\,dy$ the change of variable which amounts to taking $\frac{x+y}{2}$ and $x-y$ as new variables, one obtains (\ref{315half}).
Under the support assumptions made about $v,u$ in the second part, only the term such that $r=1$ subsists.\\
\end{proof}

\section{A criterion for the Riemann hypothesis}

Using (\ref{45}) and the homogeneity of ${\mathfrak E}_{-\nu}$, one has
\begin{equation}\label{51}
Q^{2i\pi\E}{\mathfrak T}_{\infty} =\frac{1}{2i\pi}\int_{\Re \nu=c} Q^{\nu}\,\,\frac{{\mathfrak E}_{-\nu}}{\zeta(\nu)}\,d\nu,\qquad c>1.
\end{equation}
From Lemma \ref{lem33} and (\ref{315}), the product by $Q^{-1-\eps}$ of the distribution
$Q^{2i\pi\E}{\mathfrak T}_{\infty}$ remains for every $\eps>0$ in a bounded subset of ${\mathcal S}'(\R^2)$ as $Q\to \infty$. If the Riemann hypothesis holds, the same is true after
$Q^{-1-\eps}$ has been replaced by $Q^{-\hf-\eps}$.\\

We shall state and prove some suitably modified version of the converse, involving for some choice of the pair $v,\,u$ of functions in ${\mathcal S}(\R)$ the function
\begin{equation}\label{52}
F_0(v,\,u;\,s)=\sum_{Q\in \,{\Sqo}}\,Q^{-s}
\l(v\,\big|\,\Psi\l(Q^{2i\pi\E}{\mathfrak T}_{\frac{\infty}{2}}\r)u\r),
\end{equation}
where we denote as $\Sqo$ the set of squarefree odd integers. We are discarding the prime $2$ so as to avoid unnecessary complications in the sections to come on pseudodifferential arithmetic. Under the assumption that $\l(v\,\bigr|\,\Psi\l(Q^{2i\pi\E}{\mathfrak T}_{\infty}\r) u\r)=
{\mathrm{O}}\l(Q^{\hf+\eps}\r)$, the function $F_0(v,\,u;\,s)$ is analytic in the half-plane $\Re s>\frac{3}{2}$ and polynomially bounded in vertical strips in this domain, by which we mean, classically, that given $[a,b]\subset ]\frac{3}{2},\infty[$, there exists $M>0$ such that, for some $C>0$, $|F_0(\sigma+it)|\leq C\,(1+|t|)^M$ when $\sigma \in [a,b]$ and $t\in \R$.\\

First, we transform the series defining $F_0(v,\,u;\,s)$ into a line integral of convolution type.\\

\begin{lemma}\label{lem51}
Given $v,,u\in {\mathcal S}(\R)$, the function $F_0(v,\,u;\,s)$ introduced in {\em(\ref{52})\/} can be written for $\Re s$ large
\begin{multline}\label{53}
F_0(v,\,u;\,s)
=\frac{1}{2i\pi}\int_{\Re \nu=1}\l(1+2^{-s+\nu}\r)^{-1}
\frac{\zeta(s-\nu)}{\zeta(2(s-\nu))}\\
\times \,
\frac{\l(1-2^{-\nu}\r)^{-1}}{\zeta(\nu)}\,\Lan {\mathfrak E}_{-\nu},\,\Wig(v,\,u)\Ran\,
d\nu.
\end{multline}
\end{lemma}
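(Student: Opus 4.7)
The plan is to start from the integral representation (4.16) of ${\mathfrak T}_{\frac{\infty}{2}}$, apply the rescaling operator $Q^{2i\pi\E}$ term-by-term, test against $\Wig(v,u)$ via (2.4), and then interchange the sum over $Q$ with the contour integral so that the sum can be evaluated as a Dirichlet series.

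First, fix $c>1$ large so that (4.16) holds and so that the subsequent $Q$-sum converges absolutely. Since $(2i\pi\E){\mathfrak E}_{-\nu}=\nu\,{\mathfrak E}_{-\nu}$, applying $Q^{2i\pi\E}$ to (4.16) produces an extra factor $Q^{\nu}$ inside the integrand. Pairing with $\Wig(v,u)$ via (2.4) gives
\[
\l(v\,\big|\,\Psi(Q^{2i\pi\E}{\mathfrak T}_{\frac{\infty}{2}})u\r)
=\frac{1}{2i\pi}\int_{\Re\nu=c}Q^{\nu}\,
\frac{(1-2^{-\nu})^{-1}}{\zeta(\nu)}\,\Lan{\mathfrak E}_{-\nu},\Wig(v,u)\Ran\,d\nu.
\]
Multiplying by $Q^{-s}$ and summing over $Q\in\Sqo$ yields $F_0(s)$.

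Next, I would justify swapping sum and integral. The inner factor $\Lan{\mathfrak E}_{-\nu},\Wig(v,u)\Ran$ decays rapidly in $|\Im\nu|$ on vertical strips: using the identity (3.15) applied to $W=\Wig(v,u)\in\S$, one can replace $\Lan{\mathfrak E}_{-\nu},\Wig(v,u)\Ran$ by $(a-\nu)^{-A}\Lan{\mathfrak E}_{-\nu},(a+2i\pi\E)^A\Wig(v,u)\Ran$ for any $A$, and the second assertion of Lemma \ref{lem33} gives a uniform bound on $(\nu-1){\mathfrak E}_{-\nu}$ in $\S'(\R^2)$ on the strip $\eps\le\Re\nu\le a$. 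The remaining factors $1/\zeta(\nu)$ and $(1-2^{-\nu})^{-1}$ are polynomially bounded on $\Re\nu=c$. This gives absolute convergence of the double integral/sum, allowing Fubini, for $\Re s$ sufficiently large (namely $\Re s>c+1$).

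Then, evaluate the interior sum. For $\Re w>1$,
\[
\sum_{Q\in\Sqo}Q^{-w}=\prod_{p\neq 2}(1+p^{-w})
=\bigl(1+2^{-w}\bigr)^{-1}\prod_{p}(1+p^{-w})
=\bigl(1+2^{-w}\bigr)^{-1}\frac{\zeta(w)}{\zeta(2w)},
\]
the last equality coming from the standard Euler product $\prod_p(1+p^{-w})=\zeta(w)/\zeta(2w)$. Setting $w=s-\nu$ gives exactly the factor $(1+2^{-s+\nu})^{-1}\zeta(s-\nu)/\zeta(2(s-\nu))$ appearing in (5.3).

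Finally, I would shift the contour from $\Re\nu=c>1$ to $\Re\nu=1$. On the strip between, the integrand is holomorphic: the only candidate pole at $\nu=1$ from ${\mathfrak E}_{-\nu}$ is killed by $1/\zeta(\nu)$ (as noted at the end of the proof of Proposition \ref{prop41}), and Hadamard's theorem ensures $\zeta(\nu)$ has no zero on $\Re\nu=1$; the factor $(1-2^{-\nu})^{-1}$ has its poles on $\Re\nu=0$, and $\zeta(s-\nu)$ has its pole at $\nu=s-1$, which is avoided since $\Re s$ is large. The growth estimates above make the horizontal sides at $|\Im\nu|\to\infty$ vanish. This delivers (5.3). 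The only real point requiring care is the interchange of sum and integral; the rest is bookkeeping with Euler products and contour shifting.
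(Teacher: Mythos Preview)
Your proof is correct and follows essentially the same route as the paper: expand ${\mathfrak T}_{\frac{\infty}{2}}$ via its Eisenstein integral, use homogeneity to produce the factor $Q^{\nu}$, sum over $Q\in\Sqo$ via the Euler product $\sum_{Q\in\Sqo}Q^{-\theta}=(1+2^{-\theta})^{-1}\zeta(\theta)/\zeta(2\theta)$, then shift the contour to $\Re\nu=1$ using the cancellation of the pole of ${\mathfrak E}_{-\nu}$ by $1/\zeta(\nu)$ and the decay from (3.15). Your justification of the Fubini step is in fact more explicit than the paper's.
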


\begin{proof}
For $\Re s>1$, one has the identity
\begin{align}\label{54}
\sum_{Q\in\,\Sqo}\,Q^{-s}&=\prod_{\begin{array}{c} p\,{\mathrm{prime}}\\ p\neq 2\end{array}}\l(1+p^{-s}\r)\nonumber\\
&=\l(1+2^{-s}\r)^{-1}\prod_p\frac{1-p^{-2s}}{1-p^{-s}}
=\l(1+2^{-s}\r)^{-1}\frac{\zeta(s)}{\zeta(2s)}.
\end{align}
We apply this with $s$ replaced by $s-\nu$, starting from (\ref{52}). One has $\l(v\,\big|\,\Psi\l(Q^{2i\pi\E}{\mathfrak E}_{-\nu}\r)u\r)=
Q^{\nu}\Lan {\mathfrak E}_{-\nu},\,\Wig(v,u)\Ran$ according to (\ref{24}). Using (\ref{51}) and (\ref{54}), one obtains (\ref{53}) if $\Re s>2$, noting that the denominator $\zeta(\nu)$ takes care of the pole of ${\mathfrak E}_{-\nu}$ at $\nu=1$, which makes it possible to replace the line $\Re\nu=c,\,c>1$ by the line $\Re\nu=1$.
Note that the integrability at infinity is taken care of by (\ref{315}), together with Hadamard's theorem as recalled immediately after (\ref{412}).\\
\end{proof}

\begin{lemma}\label{lem52}
Let $\rho\in \C$ and consider a product $h(\nu)\,f(s-\nu)$, where the function $f=f(z)$, defined and meromorphic near the point $z=1$, has a simple pole at this point, and the function $h$, defined and meromorphic near $\rho$, has at that point a pole of order $\ell\geq 1$. Then, the function $s\mapsto {\mathrm{Res}}_{\nu=\rho}\l[h(\nu)\,f(s-\nu)\r]$ has at $s=1+\rho$ a pole of order $\ell$.\\
\end{lemma}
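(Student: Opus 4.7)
The plan is to reduce the statement to a direct Laurent expansion computation. First, I would write the local expansions
\begin{equation*}
f(z)=\frac{A}{z-1}+g(z),\qquad h(\nu)=\sum_{k=1}^{\ell}\frac{a_{k}}{(\nu-\rho)^{k}}+h_{0}(\nu),
\end{equation*}
with $A\neq 0$, $a_{\ell}\neq 0$, and $g$, $h_{0}$ holomorphic near $1$ and $\rho$ respectively. I would then introduce local coordinates $\tau=\nu-\rho$ and $w=s-1-\rho$, so that $s-\nu=1+w-\tau$ and consequently $f(s-\nu)=\dfrac{A}{w-\tau}+g(1+w-\tau)$. In these variables the quantity to compute is the residue at $\tau=0$ of the product of the two Laurent series, regarded as a function of $w$.

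Next, I would split the product into four cross-terms. The singular$\,\times\,$singular contribution $\l(\sum_{k=1}^{\ell}a_{k}\tau^{-k}\r)\cdot \dfrac{A}{w-\tau}$ is evaluated by the geometric expansion $\dfrac{1}{w-\tau}=\sum_{j\geq 0}\tau^{j}w^{-j-1}$, valid for $|\tau|<|w|$: picking out the coefficient of $\tau^{-1}$ yields
\begin{equation*}
\sum_{k=1}^{\ell}\frac{a_{k}A}{w^{k}}.
\end{equation*}
The singular$\,\times\,$regular term $\l(\sum_{k=1}^{\ell}a_{k}\tau^{-k}\r)\cdot g(1+w-\tau)$ contributes $\sum_{k=1}^{\ell}a_{k}\cdot \tfrac{(-1)^{k-1}}{(k-1)!}g^{(k-1)}(1+w)$, a holomorphic function of $w$ near $0$. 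The regular$\,\times\,$singular term $h_{0}(\rho+\tau)\cdot \dfrac{A}{w-\tau}$ and the regular$\,\times\,$regular term $h_{0}(\rho+\tau)\,g(1+w-\tau)$ are both holomorphic at $\tau=0$ for $w\neq 0$, so contribute $0$ to the residue.

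Summing these contributions gives
\begin{equation*}
{\mathrm{Res}}_{\nu=\rho}\l[h(\nu)f(s-\nu)\r]=\sum_{k=1}^{\ell}\frac{a_{k}A}{(s-1-\rho)^{k}}+\varphi(s),
\end{equation*}
where $\varphi$ is holomorphic in a neighbourhood of $s=1+\rho$. Since $a_{\ell}A\neq 0$, the right-hand side has a pole of order exactly $\ell$ at $s=1+\rho$, which is the conclusion. There is no genuine obstacle here; the only point that requires a mild sanity check is the non-vanishing of the leading coefficient, and this is immediate from the assumption that $\rho$ is a pole of $h$ of order exactly $\ell$ together with the fact that $f$ has an honest (i.e.\ non-removable) simple pole at $1$.
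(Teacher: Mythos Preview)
Your proof is correct and takes essentially the same approach as the paper: both expand $h$ via its principal part at $\rho$ and compute the residue term by term. The paper writes the result more compactly as $\sum_{j=1}^{\ell}(-1)^{j-1}a_j\,\frac{f^{(j-1)}(s-\rho)}{(j-1)!}$ and then observes that $f^{(j-1)}(s-\rho)$ has a pole of order $j$ at $s=1+\rho$, whereas you split $f$ into its polar part $A/(z-1)$ and regular part first; this is only a difference in bookkeeping.
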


\begin{proof}
If $h(\nu)=\sum_{j=1}^{\ell}a_j(\nu-\rho)^{-j}+{\mathrm{O}}(1)$ as $\nu\to \rho$, one has for $s$ close to $1+\rho$ but distinct from this point
\begin{equation}\label{55}
{\mathrm{Res}}_{\nu=\rho}\l[h(\nu)\,f(s-\nu)\r]=\sum_{j=1}^{\ell}(-1)^{j-1}a_j
\frac{f^{(j-1)}(s-\rho)}{(j-1)\,!},
\end{equation}
and the function $s\mapsto f^{(j-1)}(s-\rho)$ has at $s=1+\rho$ a pole of order $j$.\\
\end{proof}

\noindent
{\em Remark\/} 1.4.1. An integral $\int_{\gamma}h(\nu)\,f(s-\nu)\,d\nu$ over a finite part of a line cannot have a pole at $s=s_0$ unless both $h$ and the function $\nu\mapsto h(s_0-\nu)$ have poles at the same point $\nu\in \gamma$. This follows from the residue theorem together with the case $\ell=0$ of Lemma \ref{lem52}.

\begin{theorem}\label{theo53}
Assume that, for every given pair $v,u$ of functions in ${\mathcal S}(\R)$, the function $F_0(v,\,u;\,s)$ defined in the integral version {\em(\ref{53})\/}, initially defined and analytic for $\Re s>2$, extends as an analytic function in the half-plane $\{s\colon \Re s>\frac{3}{2}\}$. Then, all zeros of zeta have real parts $\leq \hf$: in other words, the Riemann hypothesis does hold.\\
\end{theorem}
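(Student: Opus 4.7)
The plan is to argue the contrapositive. Assume a zero $\rho$ of $\zeta$ exists with $\Re\rho>\hf$, and choose $v,u\in C^{\infty}(\R)$ with compact support, satisfying the support conditions of Theorem \ref{theo34} ($x>0$ and $0<x^2-y^2<8$ whenever $v(x)u(y)\neq 0$), and such that
\begin{equation*}
\Lan{\mathfrak E}_{-\rho},\,\Wig(v,u)\Ran=\int_0^{\infty}\overline{v}(t+t^{-1})\,t^{\rho-1}\,u(t-t^{-1})\,dt\neq 0.
\end{equation*}
This is arranged by taking the supports of $v,u$ narrow enough that the integrand is concentrated near a single value $t_0>0$, in which case the integral is essentially a nonzero multiple of $t_0^{\rho-1}$. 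Since $\Re(1+\rho)>\frac{3}{2}$, exhibiting a pole of $F_0(s)$ at $s=1+\rho$ will contradict the analyticity hypothesis.

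To that end, I would shift the contour in (\ref{53}) from $\Re\nu=1$ to $\Re\nu=c':=\Re\rho-\eps$, choosing $\eps>0$ small enough that $c'>0$ (so the poles of $(1-2^{-\nu})^{-1}$ on $\Re\nu=0$ are not crossed) and that $\Re\nu=c'$ carries no zero of $\zeta$. Writing the integrand as $g(\nu)f(s-\nu)$ with
\begin{equation*}
g(\nu)=\frac{(1-2^{-\nu})^{-1}}{\zeta(\nu)}\,\Lan{\mathfrak E}_{-\nu},\Wig(v,u)\Ran,\qquad f(z)=\frac{1}{1+2^{-z}}\cdot\frac{\zeta(z)}{\zeta(2z)},
\end{equation*}
Cauchy's theorem yields, for $\Re s>1+c'$,
\begin{equation*}
F_0(s)=\frac{1}{2i\pi}\int_{\Re\nu=c'}g(\nu)\,f(s-\nu)\,d\nu\;+\sum_{\substack{\zeta(\rho')=0\\ c'<\Re\rho'<1}}\mathrm{Res}_{\nu=\rho'}\l[g(\nu)f(s-\nu)\r].
\end{equation*}
The shifted integral is analytic in this half-plane, the only singularity of $f(s-\nu)$ in that range (at $s-\nu=1$) being avoided, and the manipulation is legitimate because (\ref{315}) together with Lemma \ref{lem33} yields rapid decay of $\Lan{\mathfrak E}_{-\nu},\Wig(v,u)\Ran$ in $|\Im\nu|$ uniformly on the strip, dominating the polynomial growth of the zeta-factors. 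Lemma \ref{lem52}, applied with $g$ having a pole of order $\ell\ge 1$ at $\nu=\rho$ and $f$ a simple pole at $z=1$, then shows that the $\rho'=\rho$ residue term is meromorphic in $s$ with a pole of order $\ell$ at $s=1+\rho$, whose leading Laurent coefficient is — by the explicit formula (\ref{55}) — a nonzero numerical constant times $\Lan{\mathfrak E}_{-\rho},\Wig(v,u)\Ran$, hence nonzero by our choice. All other residues contribute poles at distinct points $1+\rho'\neq 1+\rho$, and the shifted integral is regular at $s=1+\rho$; so $F_0$ has a genuine pole there, contradicting the hypothesis.

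The principal technical point is the justification of the contour shift and of the convergence of the residue series, since the strip $c'<\Re\nu<1$ may contain infinitely many zeros of $\zeta$ altogether (though only $O(T\log T)$ up to height $T$), and one must control the passage to the limit uniformly on compact subsets of $\{\Re s>1+c'\}$. The rapid decay of $\Lan{\mathfrak E}_{-\nu},\Wig(v,u)\Ran$ in $|\Im\nu|$ furnished by (\ref{315}) comfortably dominates the polynomial growth of $1/\zeta(\nu)$, $\zeta(s-\nu)$, and $1/\zeta(2(s-\nu))$ (using Hadamard's theorem near $\Re\nu=1$), so this step, while demanding care, amounts to classical analytic bookkeeping rather than a conceptually new ingredient.
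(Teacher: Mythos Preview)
Your overall strategy is the same as the paper's: assume a zero $\rho$ with $\Re\rho>\hf$, deform the contour in (\ref{53}) to pick up the residue of $g(\nu)f(s-\nu)$ at $\nu=\rho$, and invoke Lemma~\ref{lem52} to exhibit a pole of $F_0$ at $s=1+\rho$. The choice of $v,u$ making $\Lan{\mathfrak E}_{-\rho},\Wig(v,u)\Ran\neq 0$ is fine.

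The gap is in the contour shift itself. You move the line of integration all the way to a vertical line $\Re\nu=c'$ with $\hf<c'<\Re\rho$, and assert that ``the rapid decay of $\Lan{\mathfrak E}_{-\nu},\Wig(v,u)\Ran$ \dots\ comfortably dominates the polynomial growth of $1/\zeta(\nu)$''. But polynomial growth of $1/\zeta(c'+it)$ on a fixed vertical line inside the critical strip is \emph{not} a known unconditional fact; indeed, bounds of this type are essentially equivalent to zero-free regions and are intimately tied to the very hypothesis under discussion. Hadamard's theorem only gives you control on $\Re\nu=1$. Without such a bound you cannot assert convergence of the shifted integral, nor convergence of the infinite sum of residues, nor the validity of the contour shift in the first place. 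The remark that this is ``classical analytic bookkeeping'' underestimates the difficulty.

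The paper's fix is simple and worth internalizing: instead of shifting to a full vertical line, deform the contour $\Re\nu=1$ only \emph{locally}, to a contour $\gamma$ that coincides with $\Re\nu=1$ outside a bounded window in $\Im\nu$ and makes a small excursion to the left enclosing $\rho$ alone (one first chooses $\rho$ to have maximal real part among zeros with its imaginary part, so such an excursion is possible without hitting other zeros). Then only one residue appears; the integral over $\gamma$ is manifestly convergent because on the unbounded part $\gamma$ sits on $\Re\nu=1$ where $1/\zeta(1+it)=\mathrm{O}(\log^A|t|)$, and on the bounded excursion $1/\zeta$ is continuous hence bounded. The analyticity of the $\gamma$-integral in a neighbourhood of $1+\rho$ then follows from an elementary check that $s-\nu\neq 1$ and $\Re(s-\nu)>\hf$ for $s$ near $1+\rho$ and $\nu\in\gamma$, so that neither the pole of $\zeta(s-\nu)$ nor a zero of $\zeta(2(s-\nu))$ is encountered.
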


\begin{proof}
Assume that a zero $\rho$ of zeta with a real part $>\hf$ exists: one may assume that the real part of $\rho$ is the largest one among those of all zeros of zeta (if any other should exist !) with the same imaginary part and a real part $>\hf$. Choose $\beta$ such that $0<\beta<\Re(\rho-\hf)$. Assuming that $\Re s>2$, change the line $\Re \nu=1$ to a simple contour $\gamma$ on the left of the initial line, enclosing the point $\rho$ but no other point $\rho$ with $\zeta(\rho)=0$, coinciding with the line $\Re \nu=1$ for $|\Im \nu|$ large, and such that $\Re\nu>\Re\rho-\beta$ for $\nu\in\gamma$.\\

Let $\Omega$ be the relatively open part of the half-plane $\Re\nu\leq 1$ enclosed by $\gamma$ and the line $\Re\nu=1$. Let ${\mathcal D}$ be the domain consisting of the numbers $s$ such that $s-1\in \Omega$ or $\Re s>2$. When $s\in {\mathcal D}$, one has $\Re s>1+\Re\rho-\beta>\frac{3}{2}$.\\

Still assuming that $\Re s>2$, one obtains the equation
\begin{equation}\label{56}
F_0(v,\,u;\,s)
=\frac{1}{2i\pi}\int_{\gamma} f(s-\nu)\,h_0(\nu)\,d\nu
+{\mathrm{Res}}_{\nu=\rho}\l[\,h_0(\nu)f(s-\nu)\r],
\end{equation}
with
\begin{align}\label{57}
h_0(\nu)&=\l(1-2^{-\nu}\r)^{-1}\,\times\,\frac{\Lan {\mathfrak E}_{-\nu},\,\Wig(v,\,u)\Ran}{\zeta(\nu)},\nonumber\\
f(s-\nu)&=\l(1+2^{-s+\nu}\r)^{-1}\,\times\,\frac{\zeta(s-\nu)}{\zeta(2(s-\nu))}.
\end{align}\\

We show now that the integral term in (\ref{56}) is holomorphic in the domain ${\mathcal D}$. The first point is that the numerator $\zeta(s-\nu)$ of $f(s-\nu)$ will not contribute singularities. Indeed, one can have $s-\nu=1$ with $s\in {\mathcal D}$ and $\nu\in \gamma$ only if $s\in 1+\Omega$, since $\Re(s-\nu)>1$ if $\Re s>2$ and $\nu\in\gamma$. Then, the conditions $s-1\in \Omega$ and $s-1\in \gamma$ are incompatible because on one hand, the imaginary part of $s-1$ does not agree with that of any point
of the two infinite branches of $\gamma$, while the rest of $\gamma$ is a part of the boundary of $\Omega$. Finally, when $s-1\in \Omega$ and $\nu\in\gamma$, that $\zeta(2(s-\nu))\neq 0$ follows from the inequalities $\Re(s-\nu)\geq \Re \rho-\beta>\hf$, since $\Re s>1+(\Re\rho-\beta)$ and $\Re \nu\leq 1$.\\

Since $F_0(v,\,u;\,s)$ is analytic for $\Re s>\frac{3}{2}$, it follows that the residue present in (\ref{56}) extends as an analytic function of $s$ in ${\mathcal D}$. But an application of Lemma \ref{lem52}, together with the first equation (\ref{57}) and (\ref{317}), shows that this residue is singular at $s=1+\rho$ for some choice of the pair $v,u$. We have reached a contradiction.\\

\noindent
{\em Remark\/} 1.4.2. This remark should prevent a possible misunderstanding: we regard it as important since it will have to be referred to time and again. Though we are ultimately interested in a residue at $\nu=\rho$ and in the continuation of $F_0(v,\,u;\,s)$ near $s=1+\rho$, we have established (\ref{56}) under the assumption that $\Re s>2$, in which case $s-1$ does not lie in the domain covered by $\nu$ between the line $\Re \nu=1$ and the line $\gamma$. We must therefore not add to the right-hand side of (\ref{56}) the residue of the integrand at $\nu=s-1$ (the two residues would have killed each other). Separating the poles of the two factors has been essential. The conclusion resulted from analytic continuation and the assumption that $F_0(v,\,u;\,s)$ extends as an analytic function for $\Re s>\frac{3}{2}$.\\
\end{proof}

If, for every pair $v,u$ in ${\mathcal S}(\R)$ and every $\eps>0$, one has the estimate
$\l(v\,\big|\,\Psi(Q^{2i\pi\E}{\mathfrak T}_{\infty})\,u\r)={\mathrm O}(Q^{\hf+\eps})$,
the Riemann hypothesis follows. More precisely,\\

\begin{corollary}\label{cor54}
Let $\rho\in \C$ with $\Re\rho>0$ be given. Let $v,\,u\in {\mathcal S}(\R)$ be such that
$\l(v\,\big|\,\Psi({\mathfrak E}_{-\rho})\,u\r)=\Lan {\mathfrak E}_{-\rho},\,\Wig(v,u)\Ran \neq 0$.
If the function $F_0(v,\,u;\,s)$, defined for $\Re s>2$ by {\em(\ref{52})\/}, can be continued analytically along a path connecting the point $1+\rho$ to a point with a real part $>2$, the point $\rho$ cannot be a zero of zeta. For any $\rho$, a pair $v,u$ with $v$ supported in $[2,\sqrt{8}]$ and $u$ supported in $[0,1]$, such that $\l(v\,\big|\,\Psi({\mathfrak E}_{-\rho})\,u\r)\neq 0$, can be found.
\end{corollary}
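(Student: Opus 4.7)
The plan is to adapt the local contour argument from the proof of Theorem \ref{theo53}. Suppose for contradiction that $\zeta(\rho)=0$ with multiplicity $\ell\geq 1$. Since the zeros of zeta are isolated and $\Re\rho>\hf$, I would pick $\beta>0$ small enough that the closed disc $|\nu-\rho|\leq\beta$ contains no other zero of $\zeta$, stays in $\{\Re\nu>\hf\}$, and is disjoint from the poles of $(1-2^{-\nu})^{-1}$. I would then replace the line $\Re\nu=1$ in the integral representation (\ref{53}) by a contour $\gamma$ that agrees with the line outside a bounded region, stays inside $\{\Re\nu>\Re\rho-\beta\}$, and bulges to the left to enclose $\rho$ only.

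For $\Re s>2$, Cauchy's theorem will give
\begin{equation}
F_0(s)=\frac{1}{2i\pi}\int_\gamma f(s-\nu)\,h_0(\nu)\,d\nu+{\mathrm{Res}}_{\nu=\rho}\bigl[h_0(\nu)\,f(s-\nu)\bigr],
\end{equation}
with $h_0,f$ as in (\ref{57}). The hypothesis $\Lan{\mathfrak E}_{-\rho},\Wig(v,u)\Ran\neq 0$, together with the finiteness and nonvanishing of $(1-2^{-\rho})^{-1}$, will force $h_0$ to have a pole of order exactly $\ell$ at $\nu=\rho$ with nonzero leading Laurent coefficient. Since $f(z)$ has a simple pole at $z=1$ coming from $\zeta(z)$ (the remaining factors being regular and nonvanishing there), Lemma \ref{lem52} then yields that the residue term $s\mapsto{\mathrm{Res}}_{\nu=\rho}[h_0(\nu)\,f(s-\nu)]$ has a pole of order $\ell\geq 1$ at $s=1+\rho$.

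To close the argument, I would verify that the contour integral is analytic in $s$ throughout an open connected set $\mathcal D$ containing both $\{\Re s>2\}$ and a punctured neighborhood of $1+\rho$. The only singularities of $f(s-\nu)$ to worry about along $\gamma$ are the simple pole at $s-\nu=1$ and the zeros of $\zeta(2(s-\nu))$. The first is avoided exactly as in the proof of Theorem \ref{theo53}: for $s$ close to $1+\rho$, the candidate value $\nu=s-1$ is close to $\rho$, hence strictly inside the region bounded by $\gamma$ and the original line, not on $\gamma$. The second is avoided by shrinking $\beta$, which keeps $\Re(s-\nu)$ close to $1$ and hence $\zeta(2(s-\nu))$ away from zero. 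The displayed identity then extends by analytic continuation from $\{\Re s>2\}$ along the given path to a neighborhood of $1+\rho$; since $F_0$ is analytic at $1+\rho$ by hypothesis and the integral term is analytic there, the residue term would have to be analytic too, contradicting the pole just established. The main obstacle, as in Theorem \ref{theo53}, is the careful bookkeeping that keeps the pole of $f$ at $s-\nu=1$ separate from the pole of $h_0$ at $\nu=\rho$, so that the ``good'' residue is not inadvertently cancelled by picking up the wrong one.
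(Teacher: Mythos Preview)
Your argument follows the same route as the paper's, which is literally two lines: ``take $\Omega$ enclosing the path in the assumption'' and observe that the nonvanishing condition forces $h_0$ to have a pole at $\rho$. You spell out the mechanics of Theorem~\ref{theo53} correctly, but there is one point where your construction does not quite match the hypothesis of the corollary. You build $\gamma$ as a small local bulge around $\rho$, so your domain $\mathcal D$ is essentially a small neighbourhood of $1+\rho$ attached to $\{\Re s>2\}$. The corollary, however, only assumes continuation of $F_0$ along a \emph{given} path, which need not lie inside this $\mathcal D$. When you write ``the displayed identity then extends \dots\ along the given path \dots\ and the integral term is analytic there,'' you are tacitly identifying the continuation of $I$ \emph{along the path} with the function $I$ defined on $\mathcal D$. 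Without the path actually sitting inside $\mathcal D$ (or a homotopy argument you have not supplied), these could a priori be different germs at $1+\rho$, and the contradiction dissolves: you would merely have exhibited two distinct continuations of $F_0$, one analytic and one with a pole.

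The paper's fix is precisely its one-line instruction: choose $\Omega$ (equivalently $\gamma$) large enough that $1+\Omega$ contains the given path. Then the identity $F_0=I_\gamma+\sum_{\rho'\in\Omega}{\mathrm{Res}}_{\nu=\rho'}\l[h_0(\nu)f(s-\nu)\r]$ holds along the whole path by a single analytic continuation inside $\mathcal D$. If the enlarged $\Omega$ happens to enclose other zeros $\rho'$ of $\zeta$, each extra residue term is analytic at $s=1+\rho$, so the pole there still comes solely from the $\rho$-term, and your application of Lemma~\ref{lem52} finishes exactly as you wrote.
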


\begin{proof}
It follows the proof of Theorem \ref{theo53}. It suffices there to take $\Omega$ containing the path in the assumption, and to observe that $h_0(\nu)$, as defined in (\ref{57}), would under the nonvanishing condition $\Lan {\mathfrak E}_{-\rho},\,\Wig(v,u)\Ran \neq 0$ have a pole at $\rho$ if one had $\zeta(\rho)=0$.\\

For the second assertion of the corollary, we use (\ref{317}), concentrating the supports of $v$ and $u$ near a pair $(\sqrt{y_0^2+4},y_0)$ with $0<y_0<1$.\\
\end{proof}

The criterion was obtained in \cite[section\,3.4]{birk18}. At the time, we believed (like a majority of mathematicians) that the Riemann hypothesis did hold.
We also (wrongly) believed that the sole remaining difficulty before a proof of R.H. was obtained was to find an explicit computation of the main hermitian form present in (\ref{52}). Three years ago, we succeeded in making this computation: we then realized up to which point our optimism had been misplaced.\\

Though not quite as important as we believed then, the criterion has been the occasion to introduce some of the concepts, facts and methods essential in any case. To list a few: 1. giving ${\mathfrak T}_{\infty}$, through the Weyl calculus, a hermitian structure; 2. defining a ``main hermitian form''
$(v\,|\,\Psi(Q^{2i\pi \E}{\mathfrak T}_{\infty})\,u)$ and giving a criterion for the validity of R.H. in terms of bounds for it; 3. reducing under some conditions (\ref{413}) the analysis of ${\mathfrak T}_{\infty}$ to that of ${\mathfrak T}_N$.\\

We shall not truly use Theorem \ref{theo53} as such: to start with, we shall not prove R.H., on the contrary. In this direction, the function $F_0^{\,\flat}(v,\,u;\,s)$, certainly a model for all which will follow, will have to give way, for the application to Lindel\"of's hypothesis, to the function $F_0(v,\,u;\,s)$ obtained from $F_0^{\,\flat}(v,\,u;\,s)$ when replacing the coefficient $a^{\flat}(r)$ by $a(r)$.\\

The reduction of the analysis of ${\mathfrak T}_{\infty}$ to that of ${\mathfrak T}_N$
gives significance to the congruence-theoretic developments in the three sections to come: you may, or not, agree with the name of ``pseudodifferential arithmetic'' we gave them.\\

\section{Pseudodifferential arithmetic}

In this section, we make no support assumptions on $v,u$, just taking them in ${\mathcal S}(\R)$.
We consider operators of the kind $\Psi(Q^{2i\pi\E}{\mathfrak S})$ with
\begin{equation}\label{61}
{\mathfrak S}(x,\xi)=\sum_{j,k\in \Z} b(j,k)\,\delta(x-j)\,\delta(\xi-k),
\end{equation}
under the following assumptions: that $N=RQ$ with $(R,Q)_=1$, and that $b$ satisfies the periodicity conditions
\begin{equation}\label{62}
b(j,k)=b(j+N,k)=b(j,k+N).
\end{equation}
Special cases consist of course of the symbols ${\mathfrak S}={\mathfrak T}_N$ or ${\mathfrak S}_N$.
The aim is to transform the hermitian form associated to the operator $\Psi\l(Q^{2i\pi\E}{\mathfrak T}_N\r)$ to an arithmetic version.\\

The following theorem reproduces \cite[Prop.\,4.1.2,\,\,Prop.\,4.1.3]{birk18}, the parameter denoted as $\omega$ there being set to the value $2$.\\

\begin{theorem}\label{theo61}
With $N=RQ$ and $b(j,k)$ satisfying the condition {\em(\ref{62})\/}, define the function
\begin{equation}\label{63}
f_N(j,\,s)=\frac{1}{N}\sum_{k\mm N} b(j,k)\,\exp\l(\frac{2i\pi ks}{N}\r),\,\qquad j,s\in \Z/N\Z.
\end{equation}
Set, noting that the condition $m-n\equiv 0\mm 2Q$ implies that $m+n$ too is even,
\begin{equation}\label{64}
c_{R,Q}\l({\mathfrak S};\,m\,,n\r)
={\mathrm{char}}(m+n\equiv 0\mm R,\,m-n\equiv 0\mm 2Q)\,f_N\l(\frac{m+n}{2R},\,\frac{m-n}{2Q}\r).
\end{equation}
On the other hand, set, for $u\in {\mathcal S}(\R)$,
\begin{equation}\label{65}
(\theta_Nu)(n)=\sum_{\ell \in \Z} u\l(\frac{n}{N}+2\ell N\r),\qquad n\mm 2N^2.
\end{equation}
Then, if $v,u\in {\mathcal S}(\R)$, one has with ${\mathfrak S}$ as defined in (\ref{61})
\begin{equation}\label{66}
\l(v\,\bigr|\,\Psi\l(Q^{2i\pi\E}{\mathfrak S}\r)u\r)=\sum_{m,n\in\Z/(2N^2)\Z}
c_{R,Q}\l({\mathfrak S};\,m\,,n\r)\,\overline{\theta_Nv(m)}\,(\theta_Nu)(n).
\end{equation}\\
\end{theorem}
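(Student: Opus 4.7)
The plan is to work from the Wigner-function identity (\ref{24}) and reduce (\ref{66}) to a discrete Fourier computation in the $k$-variable. Writing
\begin{equation*}
\l(v\,\bigr|\,\Psi\l(Q^{2i\pi\E}{\mathfrak S}\r)u\r)=\Lan Q^{2i\pi\E}{\mathfrak S},\,\Wig(v,u)\Ran,
\end{equation*}
I use that the transpose of $2i\pi\E$ is $-2i\pi\E$ together with the concrete action (\ref{14}) to transfer the scaling onto $\Wig(v,u)$; one obtains
\begin{equation*}
\l(v\,\bigr|\,\Psi\l(Q^{2i\pi\E}{\mathfrak S}\r)u\r)=\frac{1}{Q}\sum_{j,k\in\Z}b(j,k)\,\Wig(v,u)\l(\frac{j}{Q},\frac{k}{Q}\r).
\end{equation*}

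Next I expand $\Wig(v,u)$ as the Fourier integral (\ref{23}) and apply Poisson summation in $k$. Splitting $k=k_0+Nk_1$ with $k_0\mm N$ is permissible by the $N$-periodicity (\ref{62}) of $b$; the sum $\sum_{k_1}e^{2i\pi Rk_1\tau}$ equals $R^{-1}\sum_{m\in\Z}\delta(\tau-m/R)$ by Poisson, and the remaining inner finite sum over $k_0$ yields exactly $Nf_N(j,m)$ by the definition (\ref{63}). Integrating against the Dirac comb samples $\overline{v}$ and $u$ at $j/Q\pm m/R=(jR\pm mQ)/N$, while the factor $N/R=Q$ cancels the $1/Q$ in front. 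One gets
\begin{equation*}
\l(v\,\bigr|\,\Psi\l(Q^{2i\pi\E}{\mathfrak S}\r)u\r)=\sum_{j,m\in\Z}f_N(j,m)\,\overline{v}\l(\frac{jR+mQ}{N}\r)u\l(\frac{jR-mQ}{N}\r).
\end{equation*}

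Then I perform the change of variable $(a,c)=(jR+mQ,\,jR-mQ)$. Because $\gcd(R,Q)=1$ (as $N=RQ$ is squarefree), this is a bijection between $\Z^2$ and the sublattice $L=\{(a,c)\in\Z^2\colon 2R\mid a+c,\ 2Q\mid a-c\}$. In the odd-$N$ context in which this theorem is used, the condition $2R\mid a+c$ is equivalent to $R\mid a+c$ once $2Q\mid a-c$ is imposed (the latter forces $a+c$ even, and $R$ is odd); this matches the characteristic function in (\ref{64}). Finally I periodize by writing $a=m+2\ell_1N^2$, $c=n+2\ell_2N^2$ with $m,n\in\Z/(2N^2)\Z$ and $\ell_1,\ell_2\in\Z$. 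The sums over $\ell_1,\ell_2$ reconstruct $\overline{\theta_Nv(m)}$ and $(\theta_Nu)(n)$ via (\ref{65}); the factor $f_N((a+c)/(2R),(a-c)/(2Q))$ is invariant under these translations because $N^2/R=NQ$ and $N^2/Q=NR$ are both multiples of $N$ and $f_N$ is $N$-periodic in each variable; and the lattice condition on $(a,c)$ translates to the same congruences on $(m,n)$ because $2R$ and $2Q$ both divide $2N^2$. Assembling the pieces yields (\ref{66}).

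The main obstacle is disciplined bookkeeping among three interacting moduli: the $N$-periodicity of $b$ and $f_N$, the sampling scale $1/Q$ inherited from $Q^{2i\pi\E}$, and the eventual scale $1/N$ at which $v$ and $u$ are evaluated. The Poisson step must produce the DFT with exactly the normalization in (\ref{63}) so that the $N/R=Q$ generated by it cancels the $1/Q$ from the scaling; and the final periodization must be compatible with the $2N^2$-modulus of $\theta_N$. The only non-formal point is the parity observation inside the characteristic function, which is where the implicit oddness of $R$ is used.
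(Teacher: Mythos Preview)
Your proof is correct and follows essentially the same strategy as the paper's: split the $k$-sum using the $N$-periodicity of $b$, apply Poisson summation to produce the discrete Fourier transform $f_N$, then reorganize the resulting lattice sum and periodize modulo $2N^2$ to obtain $\theta_Nv,\theta_Nu$. The paper's version is organized slightly differently---it first rescales $v,u$ to $v[Q],u[Q]$ and works with the integral kernel of an auxiliary operator $\mathcal B$ rather than with the Wigner function---but the substance (Poisson step, appearance of $f_N$, lattice bijection, periodization, and the parity remark behind the characteristic function in (\ref{64})) is the same; your route via $\Wig(v,u)$ is if anything a bit more transparent.
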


\begin{proof}
There is no restriction here on the supports of $v,u$, and one can replace these two functions by $v[Q]\,,u[Q]\,$ defined as $v[Q]\,(x)=v(Qx)$ and $u[Q]\,(x)=u(Qx)$\,. One has
\begin{equation}\label{67}
(\theta_N\,u[Q]\,)(n)=(\kappa u)(n)\colon =\sum_{\ell \in \Z} u\l(\frac{n}{R}+2QN\ell\r),\qquad n\mm 2N^2.
\end{equation}
It just requires the definition (\ref{21}) of $\Psi$ and changes of variables amounting to rescaling $x,y,\xi$ by the factor $Q^{-1}$ to obtain
$\l(v[Q]\,\,\big|\,\Psi\l(Q^{2i\pi\E}{\mathfrak S}\r) u[Q]\,\r)=(v\,|\,{\mathcal B}u)$, with
\begin{equation}\label{68}
\l({\mathcal B}\,u\r)(x)=\frac{1}{2\,Q^2}\int_{\R^2}{\mathfrak S}\l(\frac{x+y}{2},\,\xi\r) u(y)\,\exp\l(\frac{i\pi}{Q^2}(x-y)\xi\r) dy\,d\xi.
\end{equation}

The identity (\ref{66}) to be proved is equivalent, with $\kappa$ as defined in (\ref{67}), to
\begin{equation}\label{69}
(v\,|\,{\mathcal B}u)=\sum_{m,n\in\Z/(2N^2)\Z}c_{R,Q}\l({\mathfrak S};\,m\,,n\r)\,
\overline{\kappa v(m)}\,(\kappa u)(n).
\end{equation}

From (\ref{64}), one has
\begin{align}\label{610}
(v\,|\,{\mathcal B}u)&=\frac{1}{2Q^2}\intR\overline{v}(x)\,dx\int_{\R^2}{\mathfrak S}\l(\frac{x+y}{2},\,\xi\r) u(y)\,\exp\l(\frac{i\pi}{Q^2}(x-y)\xi\r) dy\,d\xi\nonumber\\
&=\frac{1}{Q^2}\intR\overline{v}(x)\,dx\int_{\R^2}{\mathfrak S}(y,\xi)\,u(2y-x)\,
\exp\l(\frac{2i\pi}{Q^2}(x-y)\xi\r) dy\,d\xi\nonumber\\
&=\frac{1}{Q^2}\intR\overline{v}(x)\,dx\sum_{j,k\in\Z} b(j,k)\,u(2j-x)\,\exp\l(\frac{2i\pi}{Q^2}(x-j)k\r).
\end{align}
Since $b(j,k)=b(j,k+N)$, one replaces $k$ by $k+N\ell$, the new $k$ lying in the interval $[0,N-1]$ of integers. One has (Poisson's formula to the effect that $\sum_{\ell}e^{2i\pi\alpha x}=|\alpha|^{-1}\sum_{\ell} \delta(x-\frac{\ell}{\alpha})$)
\begin{equation}\label{611}
\sum_{\ell \in \Z}\exp\l(\frac{2i\pi}{Q^2}(x-j)\ell N\r)=
\sum_{\ell \in \Z}\exp\l(\frac{2i\pi}{Q}(x-j)\ell R\r)
=\frac{Q}{R}\sum_{\ell \in \Z} \delta\l(x-j-\frac{\ell Q}{R}\r),
\end{equation}
and, from (\ref{610}),
\begin{multline}\label{612}
({\mathcal B}u)(x)\\
=\frac{1}{N}\sum_{\begin{array}{c}j\in \Z\\ 0\leq k<N\end{array}} b(j,k) \sum_{\ell\in \Z} u\l(j-\frac{\ell Q}{R}\r)\,\exp\l(\frac{2i\pi(x-j)k}{Q^2}\r) \delta\l(x-j-\frac{\ell Q}{R}\r)\\
=\sum_{m\in \Z} t_m\,\delta\l(x-\frac{m}{R}\r),
\end{multline}
with $m=Rj+\ell Q$ and $t_m$ to be made explicit: we shall drop the summation with respect to $\ell$ for the benefit of a summation with respect to $m$, an integer constrained by the sole condition $m\equiv Rj\mm Q$. Since, when $x=j+\frac{\ell Q}{R}=\frac{m}{R}$, one has $\frac{x-j}{Q^2}=\frac{\ell}{N}=\frac{m-Rj}{NQ}$ and $j-\frac{\ell Q}{R}=2j-x=2j-\frac{m}{R}$, one has
\begin{multline}
t_m\\
=\frac{1}{N}\sum_{\begin{array}{c}j\in \Z\\ 0\leq k<N\end{array}} b(j,k) \,{\mathrm{char}}(m\equiv Rj\mm Q) u\l(2j-\frac{m}{R}\r) \exp\l(\frac{2i\pi k(m-Rj)}{NQ}\r).\\
\end{multline}
After one has changed $j$ to $j+\ell_1QN$ with $\ell_1\in \Z$, the new $j$ hying in the interval $[0,QN[$, this transforms to
\begin{multline}
t_m=\frac{1}{N}\sum_{\begin{array}{c}0\leq j <QN\\ 0\leq k<N\end{array}} b(j,k) \,{\mathrm{char}}(m\equiv Rj\mm Q)\\
\sum_{\ell_1\in \Z} u\l(2(j+\ell_1QN)-\frac{m}{R}\r)\,\exp\l(\frac{2i\pi k(m-Rj)}{QN}\r).
\end{multline}
Recalling the definition (\ref{67}) of $\kappa u$, one obtains
\begin{multline}\label{614}
t_m=\frac{1}{N}\sum_{\begin{array}{c} 0\leq j<QN\\ 0\leq k<N\end{array}}
b(j,k)\,{\mathrm{char}}(m\equiv Rj\mm Q)\\
(\kappa u)(2Rj-m)\,\exp\l(\frac{2i\pi k(m-Rj)}{QN}\r).
\end{multline}
Using (\ref{612}), we obtain
\begin{multline}\label{615}
(v\,|\,{\mathcal B}u)=\frac{1}{N}\sum_{0\leq j<QN}\sum_{0\leq k<N} b(j,k)\,\sum_{\begin{array}{c} m_1\in \Z \\ m_1\equiv Rj\mm Q\end{array}} \\ \overline{v}\l(\frac{m_1}{R}\r)\,(\kappa u)(2Rj-m_1)\,\exp\l(\frac{2i\pi k(m_1-Rj)}{QN}\r).
\end{multline}
The change of $m$  to $m_1$ is just a change of notation.\\

Fixing $k$, we trade the set of pairs $m_1,j$ with $(m_1\in \Z,\,0\leq j< RQ^2,\,m_1\equiv Rj\mm Q)$ for the set of pairs $m,n\in \l(\Z/(2N^2)\Z\r)\times \l(\Z/(2N^2)\Z\r)$, where $m$ is the class mod $2N^2$ of $m_1$ and $n$ is the class mod $2N^2$ of $2Rj-m_1$. Of necessity, $m+n\equiv 0\mm 2R$ and $m-n\equiv 2(m-Rj)\equiv 0\mm 2Q$. Conversely, given a pair of classes $m,n$ mod $2N^2$ satisfying these conditions,
the equation $2Rj-m=n$ uniquely determines $j$ mod $\frac{2N^2}{2R}=RQ^2$, as it should. The sum $\sum_{m_1\equiv m\mm 2N^2} v\l(\frac{m_1}{R}\r)$ is just $(\kappa v)(m)$, and we have obtained the identity
\begin{equation}
(v\,|\,{\mathcal B}u)=\sum_{m,n\mm 2N^2} c_{R,Q}\l({\mathfrak S};\,m\,,n\r)\,\overline{(\kappa v)(m)}\,(\kappa u)(n),
\end{equation}
provided we define
\begin{multline}
c_{R,Q}\l({\mathfrak S};\,m\,,n\r)=\frac{1}{N}\,{\mathrm{char}}(m+n\equiv 0\mm R,\,\,m-n\equiv 0\mm 2Q)\\
\sum_{k\mm N} b\l(\frac{m+n}{2R},\,k\r) \exp\l(\frac{2i\pi k}{N}\,\frac{m-n}{2Q}\r),
\end{multline}
which is just the way indicated in (\ref{63}),\,(\ref{64}).\\
\end{proof}

\section{Computation of the arithmetic side of the main identity}\label{seccompu}

The calculations in this section and the one that follows are just the same if addressing to the distribution ${\mathfrak T}_N$ or the distribution ${\mathfrak S}_N$: the only difference ((\ref{A32}),\,(\ref{A33})) is that, changing ${\mathfrak T}_N$ to ${\mathfrak S}_N$, we must replace everywhere the M\"obius indicator $\mu(r)$ by its absolute value $|\mu(r)|$, the characteristic function of squarefree integers. Restricting oneself to odd values of $N$, a harmless restriction, dispenses us with a separation of cases as was necessary in \cite[section\,4.1]{birk18}

\begin{lemma}\label{lem71}
With the notation of Theorem {\em\ref{theo61}\/}, one has if $N=RQ$ is squarefree odd
\begin{multline}\label{71}
c_{R,Q}({\mathfrak T}_N;\,m,\,n)={\mathrm{char}}(m+n\equiv 0\mm 2R)\,{\mathrm{char}}(m-n\equiv 0\mm 2Q)\\
\sum_{\begin{array}{c} R_1R_2=R \\ Q_1Q_2=Q \end{array}}
\mu(R_1Q_1)\,{\mathrm{char}}\l(\frac{m+n}{R}\equiv 0 \mm R_1Q_1\r)\,
{\mathrm{char}}\l(\frac{m-n}{2Q}\equiv 0 \mm R_2Q_2\r).
\end{multline}\\
\end{lemma}

\begin{proof}
We compute the function $f_N(j,\,s)$ defined in (\ref{63}) in association to $b(j,\,k)=a^{\flat}((j,k))$. If $N=N_1N_2$ and $cN_1+dN_2=1$, so that $\frac{k}{N}=\frac{dk}{N_1}+\frac{ck}{N_2}$, one identifies $k\in \Z/N\Z$ with the pair $(k_1,k_2)\in
\Z/N_1\Z\times \Z/N_2\Z$ such that $k_1\equiv dk\mm N_1,\,k_2\equiv ck\mm N_2$. On one hand,
\begin{equation}
\exp\l(\frac{2i\pi ks}{N}\r)=\exp\l(\frac{2i\pi k_1s}{N_1}\r)\,\times\,\exp\l(\frac{2i\pi k_2s}{N_2}\r).
\end{equation}
On the other hand, as $(d,N_1)=(c,N_2)=1$,
\begin{equation}\label{73}
a^{\flat}((j,k,N))=a^{\flat}((j,k,N_1))\,a^{\flat}((j,k,N_2))=a^{\flat}((j,k_1,N_1))\,a^{\flat}((j,k_2,N_2)),
\end{equation}
and the same goes if replacing everywhere $a^{\flat}(...)$ by $a(...)$.
It follows from (\ref{63}) that in both cases $f_N(j,\,s)=f_{N_1}(j,\,s)\,f_{N_2}(j,\,s)$, and one has the Eulerian formula $f_N=\otimes_{p|N} f_p$, with, in the case of ${\mathfrak T}_N$ only,
\begin{align}\label{74}
f_p(j,\,s)&=\frac{1}{p}\sum_{k \mm p} \l(1-p\,\,{\mathrm{char}}(j\equiv k\equiv 0\mm p)\r)\,\exp\l(\frac{2i\pi ks}{p}\r)\nonumber\\
&=\frac{1}{p}\sum_{k\mm p}\exp\l(\frac{2i\pi ks}{p}\r)-{\mathrm{char}}(j\equiv 0\mm p)\nonumber\\
&={\mathrm{char}}(s\equiv 0\mm p)-{\mathrm{char}}(j\equiv 0\mm p).
\end{align}
If dealing with ${\mathfrak S}_N$ in place of ${\mathfrak T}_N$, one must replace the minus sign in the third line of (\ref{74}) by the plus sign: the effect in the equation that follows is to replace $\mu(N_1)$ by its absolute value. Expanding the product,
\begin{equation}
f_N(j,\,s)=\sum_{N_1N_2=N} \mu(N_1)\,
{\mathrm{char}}(s\equiv 0\mm N_2)\,{\mathrm{char}}(j\equiv 0\mm N_1).
\end{equation}
The equation (\ref{71}) follows from (\ref{64}).\\
\end{proof}

\begin{theorem}\label{theo72}
Let $N=RQ$ be squarefree odd. Let $v,u\in C^{\infty}(\R)$, compactly supported, satisfying the conditions that $x>0$ and $0<x^2-y^2<8$ when $v(x)u(y)\neq 0$. Then, if $N$ is large enough,
\begin{align}\label{76}
\l(v\,\big|\,\Psi(Q^{2i\pi\E}{\mathfrak T}_N)\,u\r)&=\sum_{Q_1Q_2=Q} \mu(Q_1)\nonumber\\
&\sum_{R_1|R} \mu(R_1)\,\overline{v}\l(\frac{R_1}{Q_2}+\frac{Q_2}{R_1}\r)\,
u\l(\frac{R_1}{Q_2}-\frac{Q_2}{R_1}\r).
\end{align}\\
\end{theorem}

\begin{proof}
Characterize $n$ mod $2N^2$ by $n\in \Z$ such that $-N^2\leq n<N^2$. The equation $\l(\theta_Nu\r)(n)=\sum_{\ell\in \Z}u\l(\frac{n}{N}+2\ell N\r)$ imposes $\ell=0$, so that $\l(\theta_Nu\r)(n)=u\l(\frac{n}{N}\r)$. Similarly, $\l(\theta_Nv\r)(m)=v\l(\frac{m}{N}\r)$ if $-N^2\leq m<N^2$.\\

The identity (\ref{66}) and Lemma \ref{lem71} yield
\begin{multline}
\l(v\,\big|\,\Psi(Q^{2i\pi\E}{\mathfrak T}_N)\,u\r)=
\sum_{\begin{array}{c} R_1R_2=R \\ Q_1Q_2=Q \end{array}} \mu(R_1Q_1)\,
{\mathrm{char}}(m+n\equiv 0\mm 2RR_1Q_1)\\
{\mathrm{char}}(m-n\equiv 0\mm 2QR_2Q_2)\, \overline{v}\l(\frac{m}{N}\r)\,u\l(\frac{n}{N}\r).
\end{multline}

Set $m+n=(2RR_1Q_1)\,a,\,m-n=(2QR_2Q_2)\,b$, with $a,\,b\in \Z$. Then,
\begin{equation}
\frac{m^2}{N^2}-\frac{n^2}{N^2}=\frac{4QR(R_1R_2)(Q_1Q_2)\,ab}{N^2}=4ab.
\end{equation}
For all nonzero terms of the last equation, one has $0<\frac{m^2}{N^2}-\frac{n^2}{N^2}<8$. On the other hand, the symbol $\l(Q^{2i\pi\E}{\mathfrak T}_N\r)(x,\,\xi)=Q\,{\mathfrak T}_N(Qx,\,Q\xi)$, just as the symbol ${\mathfrak T}_N$, is invariant under the change of $x,\xi$ to $x,x+\xi$. It follows from Theorem \ref{theo34} that its integral kernel $K(x,\,y)$ is supported in the set of points $(x,\,y)$ such that $x^2-y^2\in 4\Z$. With $x=\frac{m}{N}$ and $y=\frac{n}{N}$, the only possibility is to take $x^2-y^2=4$, hence $ab=1$, finally $a=b=1$  since $a>0$.\\

The equations $m+n=(2RR_1Q_1),\,m-n=(2QR_2Q_2)$ give $\frac{m}{N}=\frac{R_1}{Q_2}+\frac{Q_2}{R_1}$ and
$\frac{n}{N}=\frac{R_1}{Q_2}-\frac{Q_2}{R_1}$.\\
\end{proof}

\section{Arithmetic significance of last theorem}

In view of the central role of Theorem \ref{theo72} in the proof to follow of the Riemann hypothesis, we give now an independent proof of the major part of it. Reading this section is thus in principle unnecessary: but we find the role of reflections in the equation (\ref{82}) below illuminating.\\

\begin{theorem}\label{theo81}
Let $N=RQ$ be a squarefree odd integer. Introduce the reflection $n\mapsto \overset{\vee}{n}$ of $\Z/(2N^2)\Z$ such that $\overset{\vee}{n}\equiv n\mm R^2$ and $\overset{\vee}{n}\equiv -n \mm 2Q^2$. Then, with the notation in Theorem {\em\ref{theo61}\/}, one has
\begin{equation}\label{81}
c_{R,Q}\l({\mathfrak T}_N;\,m,\,n\r)=\mu(Q)\,c_{N,1}\l({\mathfrak T}_N;\,m,\,\overset{\vee}{n}\r).
\end{equation}
If two functions $u$ and $\widetilde{u}$ in ${\mathcal S}(\R)$ are such that
$\l(\theta_N\,\widetilde{u}\r)(n)=(\theta_Nu)(\overset{\vee}{n})$ for every $n\in \Z$, one has
\begin{equation}\label{82}
\l(v\,\big|\,\Psi\l(Q^{2i\pi\E}{\mathfrak T}_N\r) u\r)=\mu(Q)\,\l(v\,\big|\,\Psi({\mathfrak T}_N)\,\widetilde{u}\r),
\end{equation}\\
\end{theorem}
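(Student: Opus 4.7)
The plan is to prove \eqref{81} by a direct prime-by-prime comparison based on Lemma \ref{lem71}, and then to deduce \eqref{82} at once from it via Theorem \ref{theo61}.

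For \eqref{81}, Lemma \ref{lem71} expresses $c_{R,Q}({\mathfrak T}_N;m,n)$ as a sum over pairs $(R_1,Q_1)$ with $R_1|R$, $Q_1|Q$, of $\mu(R_1)\mu(Q_1)$ times the characteristic functions of the divisibilities $2RR_1Q_1\mid m+n$ and $2QR_2Q_2\mid m-n$ (the isolated $2R$- and $2Q$-conditions merge with the others, which is legitimate since $N$ is odd). The same lemma applied with $(R,Q)$ replaced by $(N,1)$ gives $c_{N,1}({\mathfrak T}_N;m,\overset{\vee}{n})$ as a sum over $N_1|N$; using coprimality of $R$ and $Q$, one writes $N_1=R_1'Q_1'$ with $R_1'|R$, $Q_1'|Q$, so that $\mu(N_1)=\mu(R_1')\mu(Q_1')$.

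The main tool is the Chinese remainder decomposition $\Z/(2N^2)\Z \simeq \Z/2\Z \times \prod_{p|R}\Z/p^2\Z \times \prod_{p|Q}\Z/p^2\Z$. The defining conditions of $\overset{\vee}{n}$ express that, in this decomposition, $\overset{\vee}{n}$ coincides with $n$ on the $\Z/2\Z$ factor (because $-n\equiv n \mm 2$) and on the $R$-factors, and equals $-n$ on the $Q$-factors. Hence every divisibility condition on $m\pm \overset{\vee}{n}$ translates, prime by prime, into a condition on $m\pm n$, with signs preserved on the $2$- and $R$-sides and swapped on the $Q$-side. Carrying out this translation, one finds that the summand of $c_{N,1}({\mathfrak T}_N;m,\overset{\vee}{n})$ indexed by $(R_1',Q_1')$ yields exactly the same characteristic conditions as the summand of $c_{R,Q}({\mathfrak T}_N;m,n)$ indexed by $(R_1,Q_1)=(R_1',Q_2')$; the crucial point is that the interchange occurs only in the $Q$-part. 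Under this bijection, the identities $\mu(Q)=\mu(Q_1)\mu(Q_2)$ and $\mu(Q_2)^2=1$ turn $\mu(Q)\mu(R_1')\mu(Q_1')$ into $\mu(R_1)\mu(Q_1)$, which yields \eqref{81}.

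Given \eqref{81}, the identity \eqref{82} is immediate. The map $n\mapsto\overset{\vee}{n}$ is an involution of $\Z/(2N^2)\Z$ (clear from the CRT description, since applying the $Q$-side reflection twice is the identity and the $\Z/2\Z$ component is fixed). Using \eqref{66} to expand the left-hand side of \eqref{82}, substituting \eqref{81} coefficient-by-coefficient, and then making the change of summation variable $n\mapsto \overset{\vee}{n}$, the factor $(\theta_N u)(n)$ turns into $(\theta_N u)(\overset{\vee}{n})=(\theta_N\widetilde u)(n)$, and one arrives at $\mu(Q)$ times the bilinear form which, by a second application of \eqref{66} now with $R=N$ and $Q=1$, equals $(v\,|\,\Psi({\mathfrak T}_N)\widetilde u)$.

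The main obstacle is the CRT bookkeeping in the proof of \eqref{81}. The essential structural observation is that the reflection $n\mapsto\overset{\vee}{n}$ on the $Q$-side of the CRT decomposition exactly swaps the roles of $Q_1$ and $Q_2$ in the divisibility conditions coming from Lemma \ref{lem71}; the resulting sign flip in the $\mu$-factors produces precisely the external factor $\mu(Q)$. Everything else is formal.
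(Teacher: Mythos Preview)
Your proof is correct, but it takes a different route from the paper's. The paper works directly with the definition $c_{R,Q}({\mathfrak T}_N;m,n)={\mathrm{char}}(\dots)\,f_N\!\l(\frac{m+n}{2R},\frac{m-n}{2Q}\r)$ from \eqref{64} and uses the antisymmetry $f_p(j,s)=-f_p(s,j)$ visible in \eqref{73}, which immediately gives $f_N(j,s)=\mu(N)\,f_N(s,j)$; after reducing to the case $R=1$ (since the $R$-side is untouched by $n\mapsto\overset{\vee}{n}$), the identity \eqref{81} becomes a one-line swap of the two arguments of $f_Q$. You instead expand both sides via Lemma~\ref{lem71} into sums over divisor pairs and then match terms through a CRT bookkeeping, discovering that the reflection $n\mapsto\overset{\vee}{n}$ induces the bijection $(R_1',Q_1')\leftrightarrow(R_1,Q_1)=(R_1',Q_2')$ on the index set, with $\mu(Q)$ produced by $\mu(Q_1')\mu(Q_2')=\mu(Q)$. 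Your approach is more explicit and self-contained (it does not invoke the Eulerian structure of $f_N$), at the cost of heavier combinatorics; the paper's approach isolates the conceptual reason --- the sign flip $f_p(j,s)=-f_p(s,j)$ --- and is accordingly shorter. For \eqref{82} the two proofs coincide.
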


\begin{proof}
With the notation of Theorem \ref{theo61}, and making use of (\ref{74}), one has the Eulerian decomposition $f_N=\otimes f_p$, in which, for every $p$, $f_p(j,\,s)=-f_p(s,\,j)$: it follows that
$f_N(j,\,s)=\mu(N)\,f_N(s,\,j)$. If dealing with ${\mathfrak S}_N$ in place of ${\mathfrak T}_N$, just write $f_p(j,\,s)=f_p(s,\,j)$ instead. The net result will be to replace everywhere the $\mu$-factors by their absolute values.\\

To prove (\ref{81}), one may assume that $R=1,\,N=Q$ since the $R$-factor is left unaffected by the map $n\mapsto \overset{\vee}{n}$. Then,
\begin{align}
c_{1,Q}\l({\mathfrak S};\,m,\,n\r)&={\mathrm{char}}(m-n\equiv 0\mm 2Q)\,f\l(\frac{m+n}{2},\,\frac{m-n}{2Q}\r),\nonumber\\
c_{Q,1}\l({\mathfrak S};\,m,\,\overset{\vee}{n}\r)&={\mathrm{char}}(m+\overset{\vee}{n}\equiv 0\mm 2Q)\,f\l(\frac{m+\overset{\vee}{n}}{2Q},\,\frac{m-\overset{\vee}{n}}{2}\r)\nonumber\\
&={\mathrm{char}}(m-n\equiv 0\mm 2Q)\,f\l(\frac{m-n}{2Q},\,\frac{m+n}{2}\r).
\end{align}
That the first and third line are the same, up to the factor $\mu(Q)$, follows from the set of equations (\ref{74}) $f_p(j,\,s)=-f_p(s,\,j)$.\\

The equation (\ref{82}) follows from (\ref{81}) in view of (\ref{66}).\\
\end{proof}

It is not immediately obvious that, given $u\in {\mathcal S}(\R)$, there exists $\widetilde{u}\in {\mathcal S}(\R)$ such that $\l(\theta_N\,\widetilde{u}\r)(n)=(\theta_Nu)(\overset{\vee}{n})$ for every $n\in \Z$. Note that such a function $\widetilde{u}$ could not be unique, since any translate by a multiple of $2N^2$ would do just as well.\\

\begin{lemma}
Given $u\in {\mathcal S}{\R}$, define
\begin{equation}\label{84}
\widetilde{u}(y)=\frac{1}{Q^2}\sum_{0\leq \sigma,\tau<Q^2} u\l(y+\frac{2R\tau}{Q}\r)\,
\exp\l(2i\pi\,\frac{\sigma(Nx+R^2\tau)}{Q^2}\r).
\end{equation}
Then, $\l(\theta_N\,\widetilde{u}\r)(n)=(\theta_Nu)(\overset{\vee}{n})$.\\
\end{lemma}

\begin{proof}
With such a definition, one has
\begin{multline}
(\theta_N\widetilde{u})(n)\\
=\frac{1}{Q^2}\sum_{\ell \in \Z} \sum_{0\leq \sigma,\tau<Q^2}
u\l(\frac{n}{N}+\frac{2R\tau}{Q}+\ell N\r) \,\exp\l(\frac{2i\pi\sigma}{Q^2}\,(n+\ell N^2+R^2\tau)\r).
\end{multline}
Summing with respect to $\sigma$, this is the same as
\begin{equation}
\sum_{\ell \in \Z} u\l(\frac{n+2R^2\tau+\ell N^2}{N}\r),
\end{equation}
where the integer $\tau \in [0,Q^2[$ is characterized by the condition $n+\ell N^2+2R^2\tau\equiv 0 \mm Q^2$, or $n+2R^2\tau\equiv 0\mm Q^2$. Finally, as $\ell \in \Z$, the number $n+2R^2\tau+\ell N^2$ runs through the set of integers $n_2$ such that $n_2\equiv -n\mm Q^2$ and $n_2\equiv n\mm 2R^2$, in other words the set of numbers $n_2$ such that $n_2\equiv \overset{\vee}{n} \mm 2N^2$.\\
\end{proof}

Proposition \ref{prop83} and Lemma \ref{lem84} to follow provide a proof of Theorem \ref{theo81} independent of the results of Sections 1.6 and 1.7.\\

\begin{proposition}\label{prop83}
Fixing the decomposition $N=RQ$ of a squarefree odd number $N$, and decomposing for $r=1,2,\dots$ the coefficient $a^{\flat}((r,N))$ as $a_R(r)a_Q(r)$, with $a_R(r)=a^{\flat}((r,R))$ and $a_Q(r)=a^{\flat}((r,Q))$, introduce the coefficients
\begin{equation}\label{87}
\overset{\vee}{a}(j,k)=a_R((j,k))\,\times\,\frac{1}{Q}\sum_{s,m\mm Q} a_Q((s,m))\,
\exp\l(\frac{2i\pi}{Q}\l(mj-ks\r)\r).
\end{equation}
Also, recalling that
\begin{equation}
{\mathfrak T}_N(x,\,\xi)=
\sum_{j,k\in\Z} a^{\flat}((j,k,N))\,\delta(x-j)\, \delta(\xi-k),
\end{equation}
set
\begin{equation}\label{89}
\overset{\vee}{\mathfrak T}_N(x,\,\xi)=
\sum_{j,k\in\Z} \overset{\vee}{a}(j,k)\,\delta(x-j)\, \delta(\xi-k).
\end{equation}\\

Then, given $,u\in {\mathcal S}(\R)$ and recalling the definition {\em(\ref{84})\/} of $\widetilde{u}$, one has
\begin{equation}\label{810}
\l(v\,\big|\,\Psi\l(Q^{2i\pi\E}\,\overset{\vee}{\mathfrak T}_N\r)\,u\r)
=\l(v\,\big|\,\Psi\l({\mathfrak T}_N\r)\,\widetilde{u}\r).
\end{equation}
\end{proposition}

\begin{proof}
Setting $\lambda=\frac{R}{Q}$, one has
\begin{equation}
\widetilde{u}(x)=\frac{1}{Q^2}\sum_{0\leq \sigma,\tau<Q^2} u(x+2\lambda\tau)\,\exp\l(2i\pi\sigma(\lambda x+\lambda^2\tau)\r).
\end{equation}
One has, changing $t$ to $t+\lambda \tau$ in the first integral to follow,
\begin{multline}
\Wig(v,\,\widetilde{u})(j,k)=\intR \overline{v}(j+t)\,\widetilde{u})(j-t)\,e^{2i\pi kt}dt\\
=\frac{1}{Q^2}\intR \overline{v}(j+\lambda\tau+t)\sum_{0\leq \sigma,\tau<Q^2} u(j+\lambda\tau-t)\,
\exp\l[2i\pi \sigma(\lambda(j-t))\r] \exp\l(2i\pi k(\lambda\tau+t)\r) dt.
\end{multline}

The right-hand side of (\ref{810}) can thus be written as
\begin{multline}\label{813}
\frac{1}{Q^2}\sum_{j,k\in\Z} b_R(j,k)\,b_Q(j,k)\sum_{0\leq \sigma,\tau<Q^2}
\exp\l(2i\pi\lambda(k\tau+j\sigma)\r)\\
\intR \overline{v}(j+\lambda\tau+t)\,u(j+\lambda\tau-t)\,\exp\l[2i\pi t(k-\sigma\lambda)\r] dt.
\end{multline}\\\

Using that $Q\lambda\in \Z$ and $N-Q^2\lambda=0$, one observes that
this expression is invariant under the change $\sigma\mapsto \sigma+Q^2$, as seen if one accompanies this change by the change $k\mapsto k+M$; similarly, it is invariant under the change $\tau\mapsto \tau+Q^2$ accompanied by the change $j\mapsto j-N$. When $j,k$ have been given, or only given mod $N$, one can thus regard in this expression --- but not in the definition of the map $u\mapsto \widetilde{u}$ --- the pairs $\sigma,\tau$ as pairs of classes mod $Q^2$.\\

Defining $m=Qj+R\tau,\,n=Qk-R\sigma$, we shall reorganize this sum as a sum over $m,n\in \Z$. Note that $j+\lambda \tau=\frac{m}{Q}$ and that $k-\sigma \lambda=\frac{n}{Q}$, so that the integral can be rewritten as
\begin{equation}
\intR \overline{v}\l(\frac{m}{Q}+t\r)\,u\l(\frac{m}{Q}-t\r)\,\exp\l(\frac{2i\pi\,nt}{Q}\r) dt.
\end{equation}
Given $m,n$, let $\sigma_0,\tau_0$ be the solutions of the congruences $R\sigma_0\equiv -n\mm Q^2,\,R\tau_0\equiv m\mm Q^2$ lying in the interval $[0,Q^2[$ of integers. One has of necessity $\sigma\equiv \sigma_0\mm Q,\,\tau\equiv \tau_0\mm Q$, and one must thus have $\sigma=\sigma_0+Q\sigma_*,\,\tau=\tau_0+Q\tau_*$ for some pair $\sigma_*,\tau_*$ of integers.
When $m,n$ have been fixed, knowing $\sigma\mm Q^2$ is equivalent to knowing $\sigma_*\mm Q$, and knowing $\tau\mm Q^2$ is equivalent to knowing $\tau_*\mm Q$. One has
$b_R((j,k))=b_R((m,n))$ because $(j,R)=(m,R)$ and $(k,R)=(n,R)$. Next, $Qj= m-R\tau\equiv R(\tau_0-\tau)\equiv-RQ\tau_* \mm Q^2$, which implies $j\equiv -R\tau_*\mm Q$, so that $(j,Q)=(\tau_*,Q)$; in the same way, $Qk=n+R\sigma\equiv R(-\sigma_0+\sigma)\equiv RQ\sigma_*\mm Q^2$, so that $k\equiv R\sigma_*\mm Q$ and $(k,Q)=(\sigma_*,Q)$. Finally, $b_Q((j,k))=b_Q((\tau_*,\sigma_*))$.\\

We have just found that $j\equiv -R\tau_*\mm Q$ and $k\equiv R\sigma_*\mm Q$, so that
\begin{multline}
k\tau+j\sigma\equiv (R\sigma_*)\tau-(R\tau_*)\sigma\equiv R(\sigma_*\tau-\tau_*\sigma)\\
\equiv \sigma_*(m-Qj)+\tau_*(n-Qk)\equiv m\sigma_*+n\tau_* \mm Q.
\end{multline}
When $m,n$ have been fixed, one can thus substitute for the sum over the classes $\sigma,\tau$ mod $Q^2$ a sum over $\sigma_*,\tau_*$ regarded as classes mod $Q$. Starting from (\ref{813}), one ends up with the expression
\begin{multline}\label{816}
\frac{1}{Q^2} \sum_{m,n\in \Z} \sum_{\sigma_*,\tau_*\mm Q} b_R(m,n)\,b_Q(\tau_*,\sigma_*)\,\exp\l(2i\pi\frac{R}{Q}\l(m\sigma_*+n\tau_*\r)\r)\\
\times\,\intR \overline{w}\l(\frac{m}{Q}+t\r) w\l(\frac{m}{Q}-t\r) \exp\l(2i\pi\frac{nt}{Q}\r) dt.
\end{multline}\\

Now, one has
\begin{multline}
\frac{1}{Q} \sum_{\sigma_*,\tau_*\mm Q} b_Q((\tau_*,\sigma_*))\,\exp\l(2i\pi\frac{R}{Q}(m\sigma_*+n\tau_*)\r)\\
=\frac{1}{Q} \sum_{0\leq \sigma_*,\tau_*\mm Q} b_Q((R\tau_*,R\sigma_*))\,\exp\l(2i\pi\frac{R}{Q}(m\sigma_*+n\tau_*)\r)\\
\frac{1}{Q} \sum_{\sigma_*,\tau_*\mm Q} b_Q((\tau_*,\sigma_*))\,\exp\l(2i\pi\frac{m\sigma_*+n\tau_*}{Q}\r)
=\overset{\vee}{b}_Q((m,n)),
\end{multline}
the same as $\overset{\vee}{b}_Q((m,n))$. One obtains finally that the right-hand side of (\ref{810}) can be rewritten as
\begin{equation}
\frac{1}{Q}\sum_{m,n\in\Z} b_R((m,n))\,\overset{\vee}{b}_Q((m,n))\,\intR \overline{v}\l(\frac{m}{Q}+t\r) u\l(\frac{m}{Q}-t\r) \exp\l(2i\pi\frac{nt}{Q}\r) dt,
\end{equation}
which is the same as
\begin{align}
\frac{1}{Q}\,\Lan(\overset{\vee}{\mathfrak T}_N),\,(x,\xi)\mapsto \Wig(v,u)\l(\frac{x}{Q},\frac{\xi}{Q}\r)\Ran &=\Lan Q^{2i\pi\E}\overset{\vee}{\mathfrak T}_N),\,\Wig(v,u)\Ran\nonumber\\
&=\l(v\,\bigr|\,{\mathrm{Op}}\l(Q^{2i\pi\E}\overset{\vee}{\mathfrak T}_N)\r)u\r).
\end{align}\\

\end{proof}

\begin{lemma}\label{lem84}
Given a squarefree integer $Q\geq 1$, set, for $r,s\in \Z$,
\begin{equation}
\widehat{a}(r,s,Q)
=\frac{1}{Q}\sum_{j,k\,{\mathrm{mod}}\,Q} a^{\flat}((j,k,Q))\,\exp\l(2i\pi\frac{kr-js}{Q}\r).
\end{equation}
One has
\begin{equation}\label{821}
\widehat{a}(r,s,Q)=\mu(Q)\,\times\,a^{\flat}((r,s,Q)).
\end{equation}\\
\end{lemma}

\begin{proof}
Let us first show that if $Q=Q_1Q_2$, one has $\widehat{a}(r,s,Q)=
\widehat{a}(r,s,Q_1)\,\widehat{a}(r,s,Q_2)$: this will make it possible to reduce the
question to the case when $Q$ is prime. Fix two integers $R_1$ and
$R_2$ such that $R_1Q_1+R_2Q_2=1$ and, using the canonical group isomorphism $Z/Q\Z \sim
\Z/Q_1\Z\times \Z/Q_2\Z$, identify $j$ with a pair $j_1,j_2$ and $k$ with a pair $k_1,k_2$.
Then, $a^{\flat}((j,k,Q))=a^{\flat}((j_1,k_1,Q_1))\,a^{\flat}((j_2,k_2,Q_2))$ and
\begin{align}\label{822}
\exp\l(2i\pi\frac{kr-js}{Q}\r)&=\exp\l(2i\pi\frac{(kr-js)R_2}{Q_1}\r)
\exp\l(2i\pi\frac{(kr-js)R_1}{Q_2}\r)\nonumber\\
&=\exp\l(2i\pi\frac{(k_1r-j_1s)R_2}{Q_1}\r)\exp\l(2i\pi\frac{(k_2r-j_2s)R_1}{Q_2}\r).
\end{align}
Since $(R_2,Q_1)=1$, the pair $j_1R_2,k_1R_2$ also runs through the set of pairs of integers
mod $Q_1$ as the pair $j_1,k_1$ does; a similar remark concerns the second factor, and the
equation $\widehat{a}(r,s,Q)=\widehat{a}(r,s,Q_1)\,\widehat{a}(r,s,Q_2)$
follows.\\

If $p$ is prime, one has $a^{\flat}((j,k,p))=1$ unless both $j$ and $k$ are zero mod $p$, in
which case it is $1-p$, so that, $z$ being a primitive $p$th root of unity,
\begin{equation}
\widehat{a}(r,s,p)=\frac{1}{p}\sum_{j,k\,{\mathrm{mod}}\,p} z^{kr-js}-1.
\end{equation}
If $p\notdiv r$, one has $\sum_{k\,{\mathrm{mod}}\,p} z^{kr}=0$ and the first sum reduces to
$0$, the same being true if $p\notdiv s$: then, $\widehat{a}(r,s,p)=-1$. If both $r$ and $s$ are divisible by $p$, one has $\widehat{a}(r,s,p)=p-1=-(1-p)$. In all cases, $\widehat{a}(r,s,p)=-
a^{\flat}((r,s,p))$, which is the desired formula.\\
\end{proof}

The following is a proof of (\ref{82}) completely independent of the first one, and of Theorem \ref{theo61}.\\

{\sc{A second proof of (\ref{82})}}\\

Recall (\ref{810}),
\begin{equation}
\l(v\,\big|\,\Psi\l(Q^{2i\pi\E}\,\overset{\vee}{\mathfrak T}_N\r)\,u\r)
=\l(v\,\big|\,\Psi\l({\mathfrak T}_N\r)\,\widetilde{u}\r),
\end{equation}
and use, a consequence of (\ref{87}), (\ref{89}) and Lemma \ref{lem84}, the fact that
$\overset{\vee}{\mathfrak T}_N=\mu(Q)\,{\mathfrak T}_N$. The equation (\ref{82}) follows.\\

But applying this formula together with Lemma \ref{lem84} leads to rather unpleasant (not published, though leading to the correct result) calculations, as we experienced. Instead, we shall give arithmetic the priority, starting from a manageable expression of the map $n\mapsto \overset{\vee}{n}$. Recall  (\ref{417}) that if $v,\,u$ are compactly supported and the algebraic sum of their supports is contained in some interval $[0,2\beta]$, and if one interests oneself in $\l(v\,\big|\,\Psi\l(Q^{2i\pi\E}{\mathfrak T}_{\frac{\infty}{2}}\r) u\r)$, one can replace ${\mathfrak T}_{\frac{\infty}{2}}$ by ${\mathfrak T}_N$ provided that $N=RQ$ is divisible by all odd primes $<\beta Q$. With this in mind, the following lemma will make it possible to assume without loss of generality that $R\equiv 1\mm 2Q^2$.\\

\begin{lemma}\label{lem82}
Let $Q$ be a squarefree odd positive integer and let $\beta>0$ be given. There exists $R>0$, with $N=RQ$ squarefree odd divisible by all odd primes $<\beta Q$, such that $R\equiv 1\mm 2Q^2$.\\
\end{lemma}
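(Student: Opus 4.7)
The strategy is to write $R = P\,S$, where $P$ supplies the required prime divisors and $S$ is a single auxiliary prime chosen via Dirichlet's theorem on primes in arithmetic progressions to arrange the congruence $R\equiv 1\mm 2Q^2$.

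First, set
\[
P=\prod_{\substack{p<\beta Q\\ p\,{\mathrm{odd}},\ p\notdiv Q}} p,
\]
so that $P$ is squarefree, odd, and coprime to $Q$. Then $\gcd(P,2Q^2)=1$, so $P$ is invertible modulo $2Q^2$. Let $a\in\{1,2,\dots,2Q^2\}$ be a representative of $P^{-1}\mm 2Q^2$; in particular $\gcd(a,2Q^2)=1$.

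By Dirichlet's theorem, the arithmetic progression $\{a+2Q^2\,k\colon k\geq 0\}$ contains infinitely many primes. Choose one such prime $S$ that is strictly larger than every prime dividing $2PQ$; then $S\notdiv 2PQ$, so $S$ is coprime to $P$ and to $Q$, and $S$ is itself squarefree.

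Finally, put $R=P\,S$. The factor $Q$ being squarefree and coprime to $PS$, $N=RQ=PSQ$ is squarefree. Every odd prime $p<\beta Q$ either divides $Q$ or else divides $P$, hence divides $N$; thus $N$ is divisible by all odd primes $<\beta Q$. Moreover
\[
R=PS\equiv P\cdot a\equiv P\cdot P^{-1}\equiv 1\mm 2Q^2,
\]
which completes the verification. The only nontrivial ingredient in the argument is Dirichlet's theorem, which is what allows the two a priori independent demands on $R$ --- being divisible by a prescribed set of primes and being congruent to a prescribed unit mod $2Q^2$ --- to be satisfied simultaneously by means of one extra prime factor.
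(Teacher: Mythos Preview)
Your proof is correct and follows essentially the same route as the paper: write $R$ as the product of the obligatory squarefree factor (your $P$, the paper's $R_1$) and a single auxiliary prime chosen by Dirichlet's theorem to force the congruence modulo $2Q^2$. The only cosmetic difference is that the paper arranges the auxiliary prime $r$ to satisfy $r\equiv 1\mm R_1$ via CRT (so that $r\nmid R_1$ automatically), whereas you simply take $S$ large enough; both devices serve the same purpose of keeping $R$ squarefree.
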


\begin{proof}
Choose $R_1$ positive, odd and squarefree, relatively prime to $Q$, divisible by all odd primes $<\beta Q$ relatively prime to $Q$, and $\overline{R}_1$ such that $\overline{R}_1R_1\equiv 1\mm 2Q^2$ and $\overline{R}_1\equiv 1\mm R_1$. Since $[R_1,2Q]=1$, there exists $x\in \Z$ such that $x\equiv 1 \mm R_1$ and $x\equiv \overline{R}_1\mm 2Q^2$. Choosing (Dirichlet's theorem) a prime $r$ such that $r\equiv x\mm 2R_1Q^2$, the number $R=R_1r$ satisfies the desired condition.\\
\end{proof}

With such a choice of $R$, we can make the map $n\mapsto \overset{\vee}{n}$ from $\Z/(2N^2)\Z$ to
$\Z/(2N^2)\Z$ explicit. Indeed, the solution of a pair of congruences $x\equiv\lambda \mm R^2,\,x\equiv \mu\mm{2Q^2}$ is given as
$x\equiv (1-R^2)\lambda +\mu R^2\mm 2N^2$. In particular, taking $\lambda=n,\,\mu=-n$, one obtains $\overset{\vee}{n}\equiv n(1-2R^2)\mm 2N^2$. Then, defining for instance $\widetilde{u}(y)=
u(y(1-2R^2))$, one has indeed $\l(\theta_N\,\widetilde{u}\r)(n)=(\theta_Nu)(\overset{\vee}{n})$. Of course, the support of $\widetilde{u}$ is not the same as that of $u$, but no support conditions are necessary for (\ref{82}) to hold.\\

We can now make a quick partial verification of the identity (\ref{76}), starting with the easy case for which $Q=1$.\\

\begin{lemma}\label{lem83}
Let $v,u$ be two functions in ${\mathcal S}(\R)$. One has for every squarefree integer $N$ the identity
\begin{equation}\label{825}
\l(v\,\big|\,\Psi\l({\mathfrak T}_N\r) u\r)=\sum_{T|N} \mu(T)\sum_{j,k\in \Z} \overline{v}\l(Tj+\frac{k}{T}\r) u\l(Tj-\frac{k}{T}\r).
\end{equation}
\end{lemma}

\begin{proof}
Together with the operator $2i\pi\E$, let us introduce the operator $2i\pi\E^{\natural}=r\,\frac{\partial}{\partial r}-s\,\frac{\partial}{\partial s}$ when the coordinates $(r,s)$ are used on $\R^2$. One has if ${\mathcal F}_2^{-1}$ denotes the inverse Fourier transformation with respect to the second variable ${\mathcal F}_2^{-1}\l[(2i\pi\E)\,{\mathfrak S}\r]=(2i\pi\E^{\natural})\,
{\mathcal F}_2^{-1}{\mathfrak S}$ for every tempered distribution ${\mathfrak S}$.
From the relation (\ref{49}) between ${\mathfrak T}_N$ and the Dirac comb, and Poisson's formula, one obtains
\begin{equation}\label{826}
{\mathcal F}_2^{-1}{\mathfrak T}_N= \prod_{p|N}\l(1-p^{-2i\pi\E^{\natural}}\r){\mathcal F}_2^{-1}{\mathcal Dir}= \sum_{T|N} \mu(T)\,T^{-2i\pi\E^{\natural}}{\mathcal Dir},
\end{equation}
explicitly
\begin{equation}
\l({\mathcal F}_2^{-1}{\mathfrak T}_N\r)(r,\,s)=
\sum_{T|N} \mu(T)\,\sum_{j,k\in\Z} \delta\l(\frac{r}{T}-j\r)\delta(Ts-k).
\end{equation}
The integral kernel of the operator $\Psi\l({\mathfrak T}_N\r)$ is (\ref{22})
\begin{align}\label{828}
K(x,\,y)&=\hf\l({\mathcal F}_2^{-1}{\mathfrak T}_N\r)\l(\frac{x+y}{2},\,\frac{x-y}{2}\r)\nonumber\\
&=\sum_{T|N} \mu(T)\,\sum_{j,k\in\Z}\delta\l(x-Tj-\frac{k}{T}\r)
\delta\l(y-Tj+\frac{k}{T}\r).
\end{align}
The equation (\ref{825}) follows.\\
\end{proof}

We now take advantage of (\ref{82}) and Lemma \ref{lem82}, after the proof of which we have seen that one could choose $\widetilde{u}(y)=u(y(1-2R^2)+2aN^2)$, with any $a\in \Z$, to obtain a quick verification of the main feature of (\ref{76}). As a particular case of the conditions in Theorem \ref{theo72}, we assume that the support of $v$ is contained in $[2,\sqrt{8}]$ and that of $u$ in $[0,1]$. From (\ref{825}), and (\ref{82}), one obtains
\begin{multline}
\l(v\,\big|\,\Psi\l(Q^{2i\pi\E}{\mathfrak T}_N\r) u\r)=\mu(Q)\sum_{T|N} \mu(T)\sum_{j,k\in \Z} \overline{v}\l(Tj+\frac{k}{T}\r) \widetilde{u}\l(Tj-\frac{k}{T}\r)\\
=\mu(Q)\sum_{T|N} \mu(T)\sum_{j,k\in \Z} \overline{v}\l(Tj+\frac{k}{T}\r) u\l(\l(Tj-\frac{k}{T}\r)(1-2R^2)+2aN^2\r).
\end{multline}
Let $x$ and $y$ be the arguments of $\overline{v}$ and $u$ in the last expression. Note that that of $u$ is truly defined mod $2N^2$, while no such proviso is made about $v$. The argument of $u$ here lies in $[0,1]$ for at most one value of $a$, which we choose so as to have
$x^2-y^2=4$, not only $x^2-y^2\equiv 4\mm 2N^2$. Then,

\begin{multline}
1=\frac{x^2-y^2}{4}=\frac{x-y}{2}\,\times\,\frac{x+y}{2}\\
=\hf\l[Tj+\frac{k}{T}-\l(Tj-\frac{k}{T}\r)(1-2R^2)-2aN^2\r]\\
\times\,\hf \l[Tj+\frac{k}{T}+\l(Tj-\frac{k}{T}\r)(1-2R^2)+2aN^2\r]\\
=\l[\frac{k}{T}+R^2\l(Tj-\frac{k}{T}\r)-aN^2\r]\,\l[Tj-R^2\l(Tj-\frac{k}{T}\r)+aN^2\r].
\end{multline}\\

With $\alpha=\frac{x-y}{2},\,\beta=\frac{x+y}{2}$, one has, as a congruence mod $2N^2$,
\begin{equation}
Q\,\beta\equiv Q\l[(1-R^2)Tj+\frac{R^2k}{T}\r]\equiv (1-R^2)QT_j+\frac{NRk}{T},
\end{equation}
an integer since $T|N$. Writing $R^2=1+2\lambda Q^2$, so that
\begin{equation}
\alpha\equiv\frac{(1-R^2)k}{T}+R^2Tj=-\frac{2\lambda Q^2k}{T}+R^2Tj,
\end{equation}
one sees in the same way that $R\alpha\in \Z$.\\

Since $\alpha\beta\equiv 1$, the numbers $m\colon=Q\beta$ and $n\colon=\frac{R}{\beta}=R\alpha$ are integers. Setting $m=m_1m_2$ and $n=n_1n_2$ with $m_1,n_1|R$ and $m_2,n_2|Q$, the equation $mn=QR$ yields $m_1n_1=\pm R,\,m_2n_2=\pm Q$ and $\beta=\frac{R}{n}=\pm\frac{m_1n_1}{n_1n_2}-\pm \frac{m_1}{n_2}$. In other words, $\beta=\frac{R_1}{Q_2}$ with $R_1|R$ and $Q_2|Q$. We have obtained that the arguments of $\overline{v}$ and $u$ are $x=\beta+\alpha=\frac{R_1}{Q_2}+\frac{Q_2}{R_1}$
and $y=\beta-\alpha=\frac{R_1}{Q_2}-\frac{Q_2}{R_1}$. This is an equality, not just a congruence, since adding to $R_1$ a multiple of $2N^2$ would put $\beta$ out of the interval $[0,\,\hf+2^{\hf}]$ where it must lie to contribute a nonzero term.\\

This is only a quick verification of Theorem \ref{theo72} (under the condition $R\equiv 1\mm 2Q^2$), not a complete second proof since recovering the integers $T,j,k$ from the set $\{R,Q,R_1,Q_2\}$ looks like a complicated task. In particular, we have not verified, here, that the coefficient of $\overline{v}\l(\frac{R_1}{Q_2}+\frac{Q_2}{R_1}\r)\,u\l(\frac{R_1}{Q_2}-\frac{Q_2}{R_1}\r)$ is $\mu(R_1Q_1)=\mu(Q)\mu(R_1Q_2)$. Theorem \ref{theo72}, based on (\ref{64}), gives the coefficients in full. Knowing, as is the case, that this coefficient is $0$ or $\pm 1$ since the integer $a$ such that $\widetilde{u}(y)=u(y(1-2R^2)+2aN^2)$ is unique would suffice for the application in Theorem \ref{theo92} below.\\

\section{Two series of hermitian forms and their integral versions}

\begin{definition}\label{def91}
Let $\eps>0$ be fixed. Given $u\in {\mathcal S}(\R)$, we introduce for $Q=1,2,\dots$ the function
$u_Q(y)=Q^{\frac{\eps}{2}}\,u(Q^{\eps}y)$ (no need to denote it as $u_{Q^{\eps}}$, as $\eps$ is fixed).We associate to any pair $v,u$ of functions in ${\mathcal S}(\R)$ the functions
\begin{align}\label{91}
F_{\eps}^{\,\flat}(v,\,u;\,s)\colon &=\sum_{Q\in \Sqo} Q^{-s}\l(v\,\big|\,\Psi\l(Q^{2i\pi\E}{\mathfrak T}_{\frac{\infty}{2}}\r)u_Q\r),\nonumber\\
F_{\eps}(v,\,u;\,s)\colon &=\sum_{Q\in \Sqo} Q^{-s}\l(v\,\big|\,\Psi\l(Q^{2i\pi\E}{\mathfrak S}_{\frac{\infty}{2}}\r)u_Q\r).
\end{align}\\
\end{definition}

\begin{theorem}\label{theo92}
Denote as $F_0^{\,\flat}(v,\,u;\,s),\,F_0(v,\,u;\,s)$ the two series obtained from
$F_{\eps}^{\,\flat}(v,\,u;\,s),F_{\eps}(v,\,u;\,s)$ after one has replaced $u_Q$ by $u$.
Let $v,u\in C^{\infty}(\R)$, with $v$ supported in $[2,\sqrt{8}]$ and $u$ supported in $[0,1]$. The functions $F_{\eps}^{\,\flat}(v,\,u;\,s),F_{\eps}(v,\,u;\,s)$ are entire. For $\Re s$ large (for $\Re s>2$, as will be made precise after Theorem {\em\ref{theo95}\/} has been established), each of them converges as $\eps\to 0$ towards the corresponding function from the set $F_0^{\,\flat}(v,\,u;\,s),\,F_0(v,\,u;\,s)$.\\
\end{theorem}

\begin{proof}
First observe for the peace of the mind that $F_0(v,\,u;\,s)$ is the same as the function similarly denoted in (\ref{52}). The support conditions on $v,u$ are somewhat arbitrary: the only point is that
one has $0<x^2-y^2<8$ when $v(x)u(y)\neq 0$, which will make it possible, when needed, to use the simple integral expression (\ref{317}) of $(v\,\big|\,\Psi({\mathfrak E}_{-\nu})\,u)$.\\

We use Theorem \ref{theo72}. Under the given support assumptions, one can rewrite (\ref{76}) as
\begin{align}\label{92}
\l(v\,\big|\,\Psi(Q^{2i\pi\E}{\mathfrak T}^{\,\flat}_{\frac{\infty}{2}})\,u\r)&=\sum_{Q_1Q_2=Q} \mu(Q_1)\nonumber\\
&\sum_{\begin{array}{c} R_1\in \Sqo \\(R_1,Q)=1 \end{array}} \mu(R_1)\,\overline{v}\l(\frac{R_1}{Q_2}+\frac{Q_2}{R_1}\r)\,
u\l(\frac{R_1}{Q_2}-\frac{Q_2}{R_1}\r).
\end{align}
Indeed, given $Q$, $\l(v\,\big|\,\Psi(Q^{2i\pi\E}{\mathfrak T}_{\frac{\infty}{2}})\,u\r)$ coincides according to (\ref{417}) with $\l(v\,\big|\,\Psi(Q^{2i\pi\E}{\mathfrak T}_N)\,u\r)$ for $N$ odd divisible, for some $\beta>0$ depending on the supports of $v$ and $u$, by all odd primes $<\beta Q$:
we shall take $\beta=\frac{1+\sqrt{8}}{2}$ and fix $N$ accordingly. Now, given $R_1$ squarefree odd, the condition that $R_1$ divides some number $R$ with the property that $N=RQ$ is squarefree odd implies the condition $(R_1,Q)=1$. In the reverse direction, the condition $0<\frac{R_1}{Q_2}-\frac{Q_2}{R_1}<1$ implies $R_1<\frac{1+\sqrt{5}}{2}\,Q$. This proves, since $R_1$ is squarefree, that $R_1|N$ or,
if $(R_1,Q)=1$, that $R_1|R$.\\

Then, for $\Re s>2$,
\begin{multline}\label{93}
F_{\eps}^{\,\flat}(v,\,u;\,s)\colon =\sum_{Q\in \Sqo} Q^{-s}\l(v\,\big|\,\Psi\l(Q^{2i\pi\E}{\mathfrak T}_{\frac{\infty}{2}}\r)\,u_Q\r)=\sum_{Q\in \Sqo} Q^{-s+\frac{\eps}{2}}\\
\sum_{Q_1Q_2=Q} \mu(Q_1)
\sum_{\begin{array}{c} R_1\in \Sqo \\(R_1,Q)=1 \end{array}} \mu(R_1)\,\overline{v}\l(\frac{R_1}{Q_2}+\frac{Q_2}{R_1}\r)\,
u\l(Q^{\eps}\l(\frac{R_1}{Q_2}-\frac{Q_2}{R_1}\r)\r).
\end{multline}
For all nonzero terms of this series, one has for some absolute constant $C$
\begin{equation}\label{94}
\l|\frac{R_1}{Q_2}-\frac{Q_2}{R_1}\r|\leq Q^{-\eps},\quad
\l|\frac{R_1}{Q_2}-1\r|\leq C\,Q^{-\eps},\quad
|R_1-Q_2|\leq C\,Q_2Q^{-\eps}.
\end{equation}
The number of available $R_1$ is at most $C\,Q_2Q^{-\eps}$. On the other hand,
\begin{equation}
\l|\frac{R_1}{Q_2}+\frac{Q_2}{R_1}-2\r|\leq \l|\frac{R_1}{Q_2}-1\r|+\l|\frac{Q_2}{R_1}-1\r|\leq C\,Q^{-\eps}.
\end{equation}
Since $v(x)$ is flat at $x=2$, one has for every $M\geq 1$ and some constant $C_M$
\begin{multline}\label{96}
\l|\l(v\,\big|\,\Psi\l(Q^{2i\pi\E}{\mathfrak T}_N\r)\,u_Q\r)\r|\leq CC_M\,Q^{\frac{\eps}{2}}\sum_{Q_1Q_2=Q} (Q_2Q^{-\eps})\,Q^{-M\eps}\\
\leq CC_MC_2\,Q^{\frac{\eps}{2}+1-(M+1)\eps+\eps'}
=CC_MC_2\,Q^{1-(M+\hf)\eps+\eps'},
\end{multline}
where $C_2\,Q^{\eps'}$ is a bound ($\eps'$ can be taken arbitrarily small) for the number of divisors of $Q$.\\

The theorem follows, so far as the series $F_{\eps}^{\,\flat}(v,\,u;\,s)$ is concerned. Changing the series for $F_{\eps}^{\,\flat}(v,\,u;\,s)$ to that for $F_{\eps}(v,\,u;\,s)$ just demands replacing factors $\mu(r)$, in all their occurrences, by their absolute values.\\
\end{proof}

We wish now to compute $F_{\eps}^{\,\flat}(v,\,u;\,s)$ and $F_{\eps}(s)$ by analytic methods.
The novelty, in comparison to the analysis made in Section 5, is that we have replaced $u$ by $u_Q$: in just the same way as the distribution ${\mathfrak T}_{\frac{\infty}{2}}$ was decomposed into Eisenstein distributions, it is $u$ that we must now decompose into homogeneous components.
Let us recall (Mellin transformation, or Fourier transformation up to a change of variable) that functions on the line decompose into generalized eigenfunctions of the self-adjoint operator $i\l(\hf+y\frac{d}{dy}\r)$, according to the decomposition (analogous to (\ref{310}))
\begin{equation}\label{97}
u=\frac{1}{i}\int_{\Re \mu=0} u^{\mu}\,d\mu,
\end{equation}
with
\begin{equation}\label{98}
u^{\mu}(y)=\frac{1}{2\pi}\int_0^{\infty} \theta^{\mu-\hf}u(\theta y)\,d\theta.
\end{equation}
The function $u^{\mu}$ is homogeneous of degree $-\hf-\mu$. Note that $\mu$ is in the superscript position, in order not to confuse $u^{\mu}$ with a case of $u_Q$. A justification of (\ref{97}),\,(\ref{98}) is as follows.\\

Given $u=u(y)$ in ${\mathcal S}(\R)$ and $y\neq 0$, consider the function $f(t)=e^{\pi t}u\l(e^{2\pi t}y\r)$. The functions $f$ and $\widehat{f}$ are integrable. Define
\begin{equation}
u^{i\lambda}(y)\colon =\widehat{f}(-\lambda)=\frac{1}{2\pi}\int_0^{\infty} \theta^{i\lambda-\hf}\,u(\theta y)\,d\theta.
\end{equation}
Then, $u(y)=f(0)=\intR \widehat{f}(-\lambda)\,d\lambda=\frac{1}{i}\int_{\Re\mu=0} u^{\mu}\,d\mu$.\\

\begin{proposition}\label{prop93}
Let $v,\,u \in C^{\infty}(\R)$ satisfy the support conditions in Theorem {\em\ref{theo92}\/}. For every $\nu \in \C$, the function
\begin{equation}\label{910}
\mu\mapsto \Phi(v,\,u;\,\nu,\,\mu)\colon
=\int_0^{\infty} t^{\nu-1}\,\overline{v}(t+t^{-1})\,u^{\mu}(t-t^{-1})\,dt,
\end{equation}
initially defined for $\Re\mu>-\hf$, extends as an entire function, rapidly decreasing in vertical strips. One has
\begin{equation}\label{911}
(v\,\big|\,\Psi({\mathfrak E}_{-\nu})\,u)=\frac{1}{i}\,\int_{\Re\mu=0}
\Phi(v,\,u;\,\nu,\,\mu)\,\,d\mu.
\end{equation}\\
\end{proposition}

\begin{proof}
When $\overline{v}(t+t^{-1})\neq 0$, $t$ is bounded and bounded away from zero.
The last factor of the integrand of (\ref{910}) has a singularity at $t=1$, taken care of by the fact that $(t-t^{-1})^2\geq C^{-1}\,(t+t^{-1}-2)$ for $t^{\pm 1}$ bounded, while $v=v(x)$ is flat at $x=2$.
Observe at this point the decisive advantage of using a pseudodifferential operator: after having replaced $u(t-t^{-1})$ by $u^{\mu}(t-t^{-1})$, we can no longer eliminate the singularity at $t=1$ by means of a support condition relative to $u$. But we can take advantage of the factor $v$ for the same effect: we may even, so as to dispense with the immediate inequality just used, assume that the support of $v$ is contained in a compact subset of $]2,\sqrt{8}]$.\\

Powers of $(1+|\mu|)^{-1}$ can be gained with the help of repeated integrations by parts associated to the identity
\begin{equation}
(-\hf-\mu)\,u^{\mu}(t-t^{-1})=(t+t^{-1})^{-1}\,t\frac{d}{dt}\l(u^{\mu}(t-t^{-1})\r).
\end{equation}\\

Let us prove (\ref{911}). Using (\ref{97}), one has
\begin{equation}\label{913}
\frac{1}{i}\,\int_{\Re\mu=0}\Phi(v,\,u;\,\nu,\,\mu)\,\,d\mu
=\int_0^{\infty} t^{\nu-1}\,\overline{v}(t+t^{-1})\,u(t-t^{-1})\,dt.
\end{equation}
The right-hand side is the same as $(v\,\big|\,\Psi({\mathfrak E}_{-\nu})\,u)$ according to Theorem \ref{theo34}.\\

The equations (\ref{97}) and (\ref{911}) do not imply, however, that $\Phi(v,\,u;\,\nu,\,\mu)$ is the same as $(v\,\big|\,\Psi({\mathfrak E}_{-\nu})\,u^{\mu})$, because the pair $v,u^{\mu}$ does not satisfy the support conditions in the second part of Theorem \ref{theo34}. Actually, as seen from the proof of Theorem \ref{theo34},
\begin{equation}
\Phi(v,\,u;\,\nu,\,\mu)=(v\,\big|\,\Psi({\mathfrak S}_1)\,u^{\mu}),
\end{equation}
where ${\mathfrak S}_1(x,\,\xi)=|x|^{\nu-1}\exp\l(-\frac{2i\pi\xi}{x}\r)$ is the term corresponding to the choice $r=1$ in the Fourier expansion (\ref{318}) of ${\mathfrak E}_{-\nu}$. Such a reduction has been made possible by the demands made on the supports of $v,u$\\
\end{proof}

We also need to prove that, with a loss tempered by a power of $|\mu|$, the function $\nu\mapsto \Phi(v,\,u;\,\nu,\,\mu)$ is integrable on lines $\Re \nu=c$ with $c>1$. The following is more precise than what would truly be needed.\\

\begin{proposition}\label{prop94}
Let $v=v(x)$ and $u=u(y)$ be two functions satisfying the conditions in Theorem {\em\ref{theo92}\/}. Defining the operators, to be applied to $v$, such that
\begin{equation}\label{915}
D_{-1}^{\mu}v=v'',\quad D_0^{\mu}=-2\mu\l(xv'+\frac{v}{2}\r),\qquad
D_1^{\mu}v=\l(\hf+\overline{\mu}\r)\l(\frac{3}{2}+\overline{\mu}\r) x^2v,
\end{equation}
one has if $\Re\mu<\hf$
\begin{equation}\label{916}
\l(\hf+\nu^2\r)\Phi(v,\,u;\,\nu,\,\mu)=\sum_{j=-1,0,1} \Phi\l(D_j^{\mu}v,\,\,y^{-2j}u;\,\,\nu,\,\mu+2j\r).
\end{equation}
As a function of $\nu$ on any line $\Re \nu=c$ with $c>1$, the function $\Phi(v,\,u;\,\nu,\,\mu)$
is a ${\mathrm{O}}((1+|\Im\nu|)^{-2})$, with a loss of uniformity relative to $\mu$ bounded by $(1+|\mu|)^2$.\\
\end{proposition}

\begin{proof}
The equation (\ref{910}) and the identity $\nu^2\,t^{\nu}=(t\frac{d}{dt})^2t^{\nu}$ give (noting that the operator $t\frac{d}{dt}$ is the negative of its transpose if one uses the measure $\frac{dt}{t}$)
\begin{equation}
\nu^2\,\Phi(v,\,u;\,\nu,\,\mu)\colon
=\int_0^{\infty} t^{\nu-1}\,(t\frac{d}{dt})^2\,\l(\overline{v}(t+t^{-1})\,u^{\mu}(t-t^{-1})\r)\,dt.
\end{equation}
To facilitate the calculations which follow, observe, setting $|s|_1^{\alpha}=|s|^{\alpha}{\mathrm{sign}} s$, that $\frac{d}{ds}\,|s|^{\alpha}=\alpha\,|s|_1^{\alpha-1}$ and
$\frac{d}{ds}\,|s|_1^{\alpha}=\alpha\,|s|^{\alpha-1}$, finally $t\frac{d}{dt}\,v(t+t^{-1})=(t-t^{-1})\,v'(t+t^{-1})$ and
$t\frac{d}{dt}\,u(t-t^{-1})=(t+t^{-1})\,u'(t-t^{-1})$.\\

One has
\begin{multline}
t\frac{d}{dt}\l[\overline{v}(t+t^{-1})\,|t-t^{-1}|^{-\mu-\hf}\r]\\
=\overline{v'}(t+t^{-1})\,|t-t^{-1}|_1^{-\mu+\hf}-(\mu+\hf)\,\overline{v}(t+t^{-1})\,
(t+t^{-1})\,|t-t^{-1}|_1^{-\mu-\frac{3}{2}}.
\end{multline}
Next,
\begin{multline}
(t\frac{d}{dt})^2\l[\overline{v}(t+t^{-1})\,|t-t^{-1}|^{-\mu-\hf}\r]\\
=\overline{v''}(t+t^{-1})\,|t-t^{-1}|^{-\mu+\frac{3}{2}}
-2\mu\,(t+t^{-1})\,\overline{v'}(t+t^{-1})\,|t-t^{-1}|^{-\mu-\hf}\\
-(\mu+\hf)\,\overline{v}(t+t^{-1})\,|t-t^{-1}|^{-\mu-\hf}\\
+(\mu+\hf)(\mu+\frac{3}{2})\,(t+t^{-1})^2\,\overline{v}(t+t^{-1})\,
|t-t^{-1}|^{-\mu-\frac{5}{2}}.
\end{multline}\\

Now, one has $y^2u^{\mu}=\l(y^2u\r)^{\mu-2},\,\,y^{-2}u^{\mu}=\l(y^{-2}u\r)^{\mu+2}$.
If $u$ is even, so that $u^{\mu}(y)$ is a multiple of $|y|^{-\mu-\hf}$, using these identities with $y=t-t^{-1}$ leads to the identity (\ref{916}). Just exchanging the signed and unsigned versions of the power function gives the same result if $u$ is odd. The loss by a factor $(1+|\mu|)^2$ is insignificant in view of Proposition \ref{prop93}.\\
\end{proof}

In the next theorem, we denote a pair of functions useful in all that follows as $f^{\,\flat},f$ since they correspond to the pair ${\mathfrak T},\,{\mathfrak S}$ or, ultimately, to the pair $a^{\flat},a$. Incidentally, our choice of not denoting ${\mathfrak T}$ as ${\mathfrak S}^{\,\flat}$ was made just so as to avoid visually unpleasant notation such as ${\mathfrak S}^{\,\flat}_{\frac{\infty}{2}}$.\\

\begin{theorem}\label{theo95}
Set
\begin{equation}
f^{\,\flat}(\nu)=\frac{\l(1-2^{-\nu}\r)^{-1}}{\zeta(\nu)},\qquad f(\nu)=(1+2^{-\nu})^{-1}\frac{\zeta(\nu)}{\zeta(2\nu)}.
\end{equation}
Let $v,u$ satisfy the conditions of Theorem {\em\ref{theo92}\/}. With
\begin{equation}\label{921}
H_{\eps}(v,\,u;\,\nu;\,s)=\frac{1}{i}\,\int_{\Re\mu=0} f(s-\nu+\eps\mu)\,\Phi(v,\,u;\,\nu,\,\mu)\,d\mu,
\end{equation}
one has if $c>1$ and $\Re s$ is large the identities
\begin{equation}\label{920}
F_{\eps}^{\,\flat}(v,\,u;\,s)=\frac{1}{2i\pi}\int_{\Re\nu=c} f^{\,\flat}(\nu)\,\,H_{\eps}(v,\,u;\,\nu;\,s)\,d\nu
\end{equation}
and
\begin{equation}\label{921.5}
F_{\eps}(v,\,u;\,s)=\frac{1}{2i\pi}\int_{\Re\nu=c} f(\nu)\,\,H_{\eps}(v,\,u;\,\nu;\,s)\,d\nu.
\end{equation}
\end{theorem}

\begin{proof}
Using (\ref{91}) and (\ref{417}), one has for $c>1$
\begin{equation}\label{922}
F_{\eps}^{\,\flat}(v,\,u;\,s)=\frac{1}{2i\pi} \sum_{Q\in\Sqo} Q^{-s}\,
\int_{\Re\nu=c} \frac{(1-2^{-\nu})^{-1}}{\zeta(\nu)} \,Q^{\nu}\,
\l(v\,\big|\,\Psi\l({\mathfrak E}_{-\nu}\r)u_Q\r)\,d\nu.
\end{equation}
One has $(u_Q)^{\mu}=(u^{\mu})_Q=Q^{-\eps\mu}u^{\mu}$, and, according to (\ref{910}) and (\ref{911}),
\begin{equation}\label{923}
\l(v\,\big|\,\Psi\l({\mathfrak E}_{-\nu}\r)u_Q\r)=\frac{1}{i}\int_{\Re\mu=0}
Q^{-\eps\mu}\Phi(v,\,u;\,\nu,\,\mu)\,d\mu.
\end{equation}
Recall (Proposition \ref{prop93}) that $\Phi(v,\,u;\,\nu,\,\mu)$ is a rapidly decreasing function of $\mu$ in vertical strips.\\

To insert this equation into (\ref{922}), we use Proposition \ref{prop94}, which provides the $d\nu$-summability, at the price of losing at most the factor $(1+|\mu|)^2$. We obtain if $c>1$ and $\Re s$ is large enough
\begin{multline}
F_{\eps}^{\,\flat}(v,\,u;\,s)=-\frac{1}{2\pi} \sum_{Q\in \Sqo}
\int_{\Re\nu=c} Q^{-s+\nu}\,\frac{(1-2^{-\nu})^{-1}}{\zeta(\nu)} \,d\nu\\
\int_{\Re\mu=0}
Q^{-\eps\mu}\,\Phi(v,\,u;\,\nu,\,\mu)\,d\mu
\end{multline}
or, using (\ref{54}),
\begin{equation}\label{925}
F_{\eps}^{\,\flat}(v,\,u;\,s)=-\frac{1}{2\pi} \int_{\Re\nu=c} \frac{(1-2^{-\nu})^{-1}}{\zeta(\nu)} \,d\nu
\int_{\Re\mu=0} f(s-\nu+\eps\mu)\,\Phi(v,\,u;\,\nu,\,\mu)\,d\mu,
\end{equation}
with
\begin{multline}
f(s-\nu+\eps\mu)=\sum_{Q\in\Sqo}Q^{-s+\nu-\eps\mu}=\l(1+2^{-s+\nu-\eps\mu}\r)^{-1}\,
\zeta(s-\nu+\eps\mu)\\
=\l(1+2^{-s+\nu-\eps\mu}\r)^{-1}\,\frac{\zeta(s-\nu+\eps\mu)}
{\zeta(2(s-\nu+\eps\mu))},
\end{multline}
where the last equation is a consequence of (\ref{410}).\\

The proof of (\ref{921.5}) is identical to that of (\ref{920}), only replacing $f^{\,\flat}(\nu)$ by $f(\nu)$. Note that the factor $f(s-\nu+\eps\mu)$ is the same in both cases.\\
\end{proof}

A resolvent of the self-adjoint operator $i\l(y\frac{d}{dy}+\hf\r)$ in $L^2(\R)$ is given by the equations
\begin{equation}
\l[\l(y\frac{d}{dy}+\hf+\mu\r)^{-1}\,u\r](y)=\begin{cases}
\int_0^1 \theta^{\mu+\hf}\,u(\theta y)\,\frac{d\theta}{\theta}\qquad {\mathrm{if}}\,\,\Re\mu>0,\\
-\int_1^{\infty} \theta^{\mu+\hf}\,u(\theta y)\,\frac{d\theta}{\theta}\qquad {\mathrm{if}}\,\,\Re\mu<0.
\end{cases}
\end{equation}

Let $u\in {\mathcal S}(\R)$ be flat at $0$ and let $\mu\in\C$. Starting from (\ref{98}), setting $\delta=y\frac{d}{dy}$ and using the integration by parts associated to the identity $\theta\frac{d}{d\theta}\theta^{\mu+\hf}=(\mu+\hf)\,\theta^{\mu+\hf}$, one obtains for $n=1,2,\dots$
and $\mu\neq -\hf$ the identity
\begin{equation}\label{927}
u^{\mu}(y)=\frac{(-1)^n}{(\mu+\hf)^n}\,.\,\frac{1}{2\pi}\,\int_0^{\infty}\theta^{\mu+\hf}\,
(\delta^n\,u)(\theta y)\,\frac{d\theta}{\theta},\qquad y\neq 0.
\end{equation}\\

\begin{proposition}\label{prop96}
With $u\in {\mathcal S}(\R)$ supported in $[0,1]$, define for $n=1,2,\dots$, for $y$ in some compact subinterval of $\big]0,1]$ and $\mu\neq -\hf$ the functions
\begin{align}\label{928}
u^{\mu,n}_+(y)&=\frac{(-1)^n}{(\mu+\hf)^n}\,.\,\frac{1}{2\pi}\,\int_0^1 \theta^{\mu+\hf}\,(\delta^n\,u)(\theta y)\,\frac{d\theta}{\theta},\nonumber\\
u^{\mu,n}_-(y)&=\frac{(-1)^n}{(\mu+\hf)^n}\,.\,\frac{1}{2\pi}\,\int_1^{\infty} \theta^{\mu+\hf}\,(\delta^n\,u)(\theta y)\,\frac{d\theta}{\theta},
\end{align}
so that
\begin{equation}
\frac{1}{i}\int_{\Re\mu=0}\l(u^{\mu,n}_++u^{\mu,n}_-\r) d\mu=\frac{1}{i}\int_{\Re\mu=0} u^{\mu}\,d\mu=u.
\end{equation}
As a function of $\mu$, $u^{\mu,n}_{\pm}(y)$ is regular for $\mu\neq -\hf$ and has at this point a pole of order $\leq n$, of order $n$ if $(\delta^{n-1}u)(y)\neq 0$.\\

Keeping away from the pole, say by the condition $|\Im\mu|\geq 1$, the functions $(1+|\mu|)^{n+1}\,u_{\pm}^{\mu,n}(y)$ are uniformly bounded in the remaining part of any strip $\{\mu\colon \alpha\leq \Re\mu\leq \beta\}$. Bounds are uniform with respect to $y$ as well as long as $y$ remains bounded away from zero..\\
\end{proposition}

\begin{proof}
An integration by parts yields
\begin{align}\label{930}
u^{\mu,n}_-(y)&=\frac{1}{2\pi}\,\frac{(-1)^n}{(\mu+\hf)^n}\nonumber\\
&\times \l[-(\delta^{n-1}u)(y)-\int_1^{\infty} (\mu+\hf)\,\theta^{\mu+\hf}\,(\delta^{n-1}u)(\theta y)\,\frac{d\theta}{\theta}\r]\nonumber\\
&=\frac{1}{2\pi}\,\frac{(-1)^{n-1}}{(\mu+\hf)^n}\,(\delta^{n-1}u) (y)+u^{\mu,n-1}_-(y).
\end{align}
Pushing the integration by parts further, write
\begin{equation}\label{931}
u^{\mu,n}_-(y)=\frac{1}{2\pi}\,\frac{(-1)^{n-1}}{(\mu+\hf)^n}\,(\delta^{n-1}u)(y)+\dots
-\frac{1}{2\pi}\,\frac{1}{(\mu+\hf)^2}\,(\delta u)(y)+u^{\mu,1}_-(y),
\end{equation}
with
\begin{equation}\label{932}
u^{\mu,1}_-(y)=\frac{1}{2\pi}\l[\frac{u(y)}{\mu+\hf}+\int_1^{\infty}\theta^{\mu+\hf}
u(\theta y)\,\frac{d\theta}{\theta}\r].
\end{equation}
The first part of the proposition follows: recall that $y\neq 0$.\\

Then, we note that, when $(\delta^nu)(\theta y)\neq 0$, $\theta$ is bounded and bounded from below.
This gives almost the last statement, only with $n+1$ in $(1+|\mu|)^{n+1}$ replaced by $n$. We then read (\ref{930}) backwards, as
\begin{equation}\label{936}
u^{\mu,n}_-(y)=u^{\mu,n+1}_-(y)+
\frac{1}{2\pi}\,\frac{(-1)^{n+1}}{(\mu+\hf)^{n+1}}\,(\delta^nu) (y),
\end{equation}
gaining with the help of (\ref{928}) a factor $\frac{1}{\mu+\hf}$ in the estimate of $u^{\mu,n}_-(y)$ as $|\Im\mu|\to \infty$. In particular,
\begin{equation}\label{937}
u_-^{\mu,1}(y)=\frac{1}{2\pi}\,\frac{1}{(\mu+\hf)^2}\,\l[(\delta u)(y)+\int_1^{\infty} \theta^{\mu+\hf}(\delta^2 u)(\theta y)\,\frac{d\theta}{\theta}\r].
\end{equation}\\

The dependence on $y$ is obtained from an obvious change of variable in (\ref{928}).\\
\end{proof}

In the next corollary, we take advantage of the (non-indispensable, but helpful) additional demand on the support of $v$ suggested in the beginning of the proof of Proposition \ref{prop93}: then, $t-t^{-1}$ cannot approach $0$ in the integrand of (\ref{933}).\\

\begin{corollary}\label{prop97}
Let $v,u\in C^{\infty}(\R)$ satisfy the support assumptions of Theorem {\em\ref{theo92}\/}, reinforced by the condition that the support of $v$ is contained in some compact subinterval of $]2,\sqrt{8}]$.
Decompose accordingly the function in {\em(\ref{910})\/}, setting
\begin{equation}\label{933}
\Phi^{\pm,n}(v,\,u;\,\nu,\,\mu)=\int_0^{\infty} \overline{v}(t+t^{-1})\,t^{\nu}\,u_{\pm}^{\mu,n}(t-t^{-1})\,\frac{dt}{t}.
\end{equation}\\

Given a bound on $|\Re \nu|$, one has for some $C>0$
\begin{equation}\label{934}
|\Phi^{\pm,n}(v,\,u;\,\nu,\,\mu)|\leq C\,(1+|\mu|)^{-n-1}.
\end{equation}
if $|\Im\mu|\geq 1$ and $\mu$ stays in any strip with a compact real part. The constant $C$ depends
on uniform bounds on derivatives of the function $u=u(y)$, but just a uniform bound on $v=v(x)$ suffices.\\
\end{corollary}

\begin{proof}
This is a consequence of (\ref{933}) and Proposition \ref{prop96}.\\
\end{proof}

\section{A refutation of the Riemann hypothesis}\label{secAref}

In this section, we concentrate on the use of the function $F_{\eps}^{\,\flat}(v,\,u;\,s)$: it is a model for the analysis of $F_{\eps}(s)$ as well. That $F_{\eps}(v,\,u;\,s)$ is an entire function has been obtained by pseudodifferential arithmetic means. Then, in Theorem \ref{theo95}, the role of $\eps$ is to translate the function $s\mapsto f(s-\nu)$ by a small step. From now on, the analysis is mostly of Cauchy type.\\

\begin{lemma}\label{lem101}
If $d\in ]\hf,1[$ is such that, for some $\delta>0$, zeta has no zero in the strip $\{s\colon d-\delta<\Re s<d+\delta\}$, one has for some pair $C,M$ the estimate $|\zeta(d+i\tau)|^{-1}\leq C\,(1+|\tau|)^M$ for every $\tau\in \R$.\\
\end{lemma}

\begin{proof}
Recall first the Borel-Caratheodory lemma: if a function $f(w)$ is holomorphic and satisfies the condition $\Re f(w)\leq A$ for $|w|<\delta$, finally if $f(0)=0$, one has for $|w|\leq \frac{\delta}{2}$ the estimate $|f(w)|\leq 4A$ . With $s=d+i\tau$, let $f(w)=\log\,\frac{\zeta(d+i\tau+w)}{\zeta(d+i\tau)}$. The Lindel\"of convexity inequality \cite[p.201]{ten}, considerably more precise than the estimate $|\zeta(d+i\tau)|
={\mathrm{O}}(|\tau|)$ for $|\tau|\to \infty$, sufficient for our purpose, gives for $|\tau|$ large
\begin{equation}
\Re f(w)=\log\,|\zeta(d+i\tau+w)|-\log\,|\zeta(d+i\tau)|\leq C\,\log|\tau|,
\end{equation}
for some $C>0$, and it follows \cite[p.225]{ten} from the Borel-Caratheodory lemma that one has if $|\tau|\geq 2$ the estimate
\begin{equation}
|\zeta(d+i\tau)|^{-1}\leq\,|\tau|^{4C}\,|\zeta(d+i\tau+w)|^{-1}.
\end{equation}\\

Then, we use the following result, due to Valiron \cite{val}, which provides a sequence of heights at which crossing the critical strip is reasonably safe. It is the fact that there exists a sequence $(T_k)_{k\geq 1}$, $T_k\in [k,k+1[$, and a pair $B,M_1$ such that
\begin{equation}\label{103}
{\mathrm{inf}}_{0\leq \sigma\leq 1}|\zeta(\sigma+iT_k)|\geq B^{-1}\,k^{-M_1}.
\end{equation}
A modern proof, which I owe to Gerald Tenenbaum, is to be found as \cite[Lemma\,4.2]{pszet}.
From $d+i\tau$, one can reach a point $d+iT_k$ by adding consecutively a number of the order of $\frac{2}{\delta}$ of increments $w$ with $|w|\leq \frac{\delta}{2}$. The lemma follows.\\
\end{proof}

%Note that the estimate (\ref{103}) extends trivially for $\sigma>1$; for $-2<\sigma<0$, it does too, only changing $M_1$ to $\sigma+\hf$, a consequence of the functional equation and of asymptotics for the Gamma function on vertical lines, to wit \cite[p.11]{mos}
%\begin{equation}
%|\Gamma(\sigma+it)|\sim (2\pi)^{\hf}\,e^{-\frac{\pi\,|t|}{2}}\,|t|^{\sigma-\hf},\qquad |t|\to \infty.
%\end{equation}\\

\begin{theorem}\label{theo102}
Set $\sigma_0={\mathrm{sup}}\,\{\Re \rho\colon \zeta(\rho)=0\}$. Let $v,\,u\in C^{\infty}(\R)$ satisfy the assumptions of Theorem {\em \ref{theo92}\/}, here recalled: $v$ is supported in $[2,\sqrt{8}]$ and $u$ in $[0,1]$. For $c>\sigma_0$ and $c+\frac{\sigma_0}{2}<\Re s<c+1$, one has $F_{\eps}^{\,\flat}(v,\,u;\,s)=E_{\eps}^{\,\flat}(s)-i\,G_{\eps}^{\,\flat}(s)$, with
\begin{equation}\label{104}
E_{\eps}^{\,\flat}(v,\,u;\,s)=-\frac{1}{2\pi}\int_{\Re\mu=0}\int_{\Re\nu=c} \frac{\l(1-2^{-\nu}\r)^{-1}}{\zeta(\nu)}\,f(s-\nu+\eps\mu)\,\Phi(v,\,u;\,\nu,\,\mu)\,d\mu\,d\nu
\end{equation}
and
\begin{equation}\label{105}
G_{\eps}^{\,\flat}(v,\,u;\,s)=\frac{4}{\pi^2}\,\int_{\Re \mu=0}\frac{\l(1-2^{-s+1-\eps\mu}\r)^{-1}}{\zeta(s-1+\eps\mu)}\,
\Phi(v,\,u;\,s-1+\eps\mu,\,\mu)\,d\mu.
\end{equation}
We recall that $f(s)=\l(1+2^{-s}\r)^{-1}\frac{\zeta(s)}{\zeta(2s)}$ and that the function
$\Phi(v,\,u;\,\nu,\,\mu)$ was defined in Proposition {\em 9.3\/}.\\

The function $G_{\eps}^{\,\flat}(v,\,u;\,s)$, as defined by {\em(\ref{105})\/} for $\Re (s-1)>\sigma_0$, extends analytically to the half-plane $\Re (s-1)>\frac{\sigma_0}{2}$, and the decomposition $F_{\eps}^{\,\flat}(v,\,u;\,s)=E_{\eps}^{\,\flat}(v,\,u;\,s)-i\,G_{\eps}^{\,\flat}(v,\,u;\,s)$ is valid for $c+\frac{\sigma_0}{2}<\Re s<c+1$.\\
\end{theorem}

\begin{proof}
We start from Theorem \ref{theo95}, here recalled,
\begin{equation}\label{106}
F_{\eps}^{\,\flat}(v,\,u;\,s)=-\frac{1}{2\pi} \int_{\Re\nu=c} \int_{\Re\mu=0} \frac{\l(1-2^{-\nu}\r)^{-1}}{\zeta(\nu)}\, f(s-\nu+\eps\mu)\,\Phi(v,\,u;\,\nu,\,\mu)\,d\nu\,d\mu,
\end{equation}
an identity valid if $c>\sigma_0$ and $\Re s>c+1$. The double integral is absolutely convergent in view of Propositions \ref{prop93} and \ref{prop94}. The same integral converges for $c+\frac{\sigma_0}{2}<\Re s<c+1$. Indeed, in that case, the function $f(s-\nu+\eps\mu)$ is still non-singular, since the numerator of its expression just recalled is non-singular and the denominator is nonzero. The easy bound
$\zeta(s-\nu+\eps\mu)={\mathrm{O}}(|\Im(s-\nu+\eps\mu)|^{\hf})$ for $\Re(s-\nu)>0$ completes the estimates.\\

However, the convergence in the cases for which $\Re s>c+1$ and $c+\frac{\sigma_0}{2}<\Re s<c+1$ holds for incompatible reasons. To make the jump possible, we choose $c_1$ such that $\sigma_0<c_1<c$ and observe that one can replace in (\ref{106}) the integral sign $\int_{\Re\nu=c}$ by $\int_{\Re\nu=c_1}$, enlarging the domain of validity of the new identity to the half-plane $\Re s>c_1+1$. In the case when $c_1+1<\Re s<c+1$, one has for $\Re\mu=0$
\begin{multline}\label{107}
\frac{1}{2i\pi}\int_{\Re\nu=c_1} \frac{\l(1-2^{-\nu}\r)^{-1}}{\zeta(\nu)}\,f(s-\nu+\eps\mu)\,\Phi(v,\,u;\,\nu,\,\mu)\,d\nu\\
=\frac{1}{2i\pi}\int_{\Re\nu=c} \frac{\l(1-2^{-\nu}\r)^{-1}}{\zeta(\nu)}\,f(s-\nu+\eps\mu)\,\Phi(v,\,u;\,\nu,\,\mu)\,d\nu\\
-{\mathrm{Res}}_{\nu=s-1+\eps\mu}\l[\frac{\l(1-2^{-\nu}\r)^{-1}}{\zeta(\nu)}\,f(s-\nu+\eps\mu)\,
\Phi(v,\,u;\,\nu,\,\mu)\r].
\end{multline}
The $d\mu$-integral of the left-hand side coincides with $F_{\eps}^{\,\flat}(v,\,u;\,s)$ for $\Re s>c_1+1$ according to Theorem \ref{theo95}, hence extends as an entire function (Theorem \ref{theo92}). As already observed, the $d\nu$-integral which is the first term on the right-hand side is analytic in the domain $\{s\colon c+\frac{\sigma_0}{2}<\Re s<c+1\}$ and so is its $d\mu$-integral $E_{\eps}(v,\,u;\,s)$.\\

The residue of the function $f(s)=\l(1+2^{-s}\r)^{-1}\,\frac{\zeta(s)}{\zeta(2s)}$ at $s =1$ is $\frac{4}{\pi^2}$. The second line on the right-hand side of (\ref{107}) is thus
\begin{equation}\label{108}
\frac{4}{\pi^2}\,\frac{\l(1-2^{-s+1-\eps\mu}\r)^{-1}}{\zeta(s-1+\eps\mu)}\,
\Phi(v,\,u;\,s-1+\eps\mu,\,\mu),
\end{equation}
and $G_{\eps}^{\,\flat}(v,\,u;\,s)$ is for $\Re s>1+\sigma_0$ the $d\mu$-integral of this expression.\\

One has by definition $G_{\eps}^{\,\flat}(v,\,u;\,s)=i(F_{\eps}^{\,\flat}(v,\,u;\,s)-E_{\eps}^{\,\flat}(v,\,u;\,s))$ if $c_1+1<\Re s<c+1$. The right-hand side of this identity is analytic for $c+\frac{\sigma_0}{2}<\Re s<c+1$. The function $G_{\eps}^{\,\flat}(v,\,u;\,s)$, in the definition of which $c>1$ is not present, thus extends analytically for $\Re s> 1+\frac{\sigma_0}{2}$: given $s$, just take $c$ such that
${\mathrm{max}}(1,\,\Re(s-1))<c<\Re(s-\frac{\sigma_0}{2})$.
\end{proof}

\noindent
{\em Remark\/} 1.9.2. The analyticity of the function $G_{\eps}^{\,\flat}(v,\,u;\,s)$ in the domain $\{s\colon \Re (s-1)>\frac{\sigma_0}{2}\}$ will be used in the developments of Cauchy type to follow: it depended on the fact that the function $F_{\eps}^{\,\flat}(v,\,u;\,s)$ is entire, or analytic in a sufficient domain. This has been established in Theorem \ref{theo92} with the help of pseudodifferential arithmetic, and there would be no way to bypass these developments: complex analysis would not do here.\\

\begin{theorem}\label{theo103}
Let $v,u$ satisfy the support conditions in Theorem {\em\ref{theo92}\/}, and let $\eps>0$. The function
\begin{equation}\label{1009.5}
{\mathrm{Res}}_{\mu=-\hf} \Psi^{-,n}_{\eps}(v,\,u;\,\mu;\,s),
\end{equation}
with
\begin{align}\label{1010}
\Psi^{-,n}_{\eps}(v,\,u;\,\mu;\,s)\colon &=\frac{4}{\pi^2}\,\,\times\,\frac{1-2^{-s+1-\eps\mu}}{\zeta(s-1+\eps\,\mu)}
\Phi^{-,n}(v,\,u;\,s-1+\eps\mu,\,\mu)\nonumber\\
&=\frac{4}{\pi^2}\,\,\times\,\frac{1-2^{-s+1-\eps\mu}}{\zeta(s-1+\eps\,\mu)} \int_1^{\infty} \overline{v}(t+t^{-1})\,t^{s-1+\eps\mu}\,u_-^{\mu,n}(t-t^{-1})\,\frac{dt}{t},
\end{align}
is meromorphic in the half-plane $\Re(s-1-\frac{\eps}{2})>\frac{\sigma_0}{2}$, with possible poles only at points $s$ such that $\zeta(s-1-\frac{\eps}{2})=0$. Given $s_0$ such that $\zeta(s_0-1-\frac{\eps}{2})=0$, the function {\em(\ref{1009.5})\/} will have a pole at $s=s_0$ if the pair $v,u$ satisfies the condition
$\Lan {\mathfrak E}_{1-s_0+\frac{\eps}{2}},\,\Wig(v,\,\delta^{n-1}u)\Ran \neq 0$.\\
\end{theorem}

\begin{proof}
According to Lemma \ref{lem52} and Remark 1.5.1, the function $\phi$ is regular at $s$ unless $\zeta(s-1-\frac{\eps}{2})=0$. To analyze the Laurent expansion at $\mu=-\hf$ of this function, we must first manage so as to let $\mu$ show up in the last factor of the integrand only, using the (convergent) Taylor expansion
\begin{equation}\label{1011c}
t^{s-1+\eps\mu}=t^{s-1-\frac{\eps}{2}}\sum_{j\geq 0}
\frac{(\eps(\mu+\hf))^j}{j\,!}\,(\log t)^j.
\end{equation}
With
\begin{equation}
w_j(x)=v(x)\l(\log \frac{x+\sqrt{x^2-4}}{2}\r)^j,
\end{equation}
one obtains
\begin{multline}
\Phi^{-,n}(v,\,u;\,s-1+\eps\mu,\,\mu)\\
=\sum_{j\geq 0} \frac{(\eps(\mu+\hf))^j}{j\,!} \int_0^{\infty}\overline{w_j}(t+t^{-1})\,t^{s-1+\eps\mu}\,u_-^{\mu,n}(t-t^{-1})\,\frac{dt}{t}.
\end{multline}\\

Recall (\ref{931}):
\begin{equation}\label{1013c}
u_-^{\mu,n}(t-t^{-1})=\frac{1}{2\pi}\sum_{k=0}^{n-1} \frac{(-1)^k}{(\mu+\hf)^{k+1}}\,(\delta^ku)(t-t^{-1})+{\mathrm{O}}(1),\qquad \mu\to -\hf.
\end{equation}
Combining this with (\ref{1011c}), one gets
\begin{multline}
\Psi^{-,n}(v,\,u;\,\mu;\,s)\sim\frac{2}{\pi^3}\,\frac{1-2^{-s+1-\eps\mu}}{\zeta(s-1+\eps\mu)}\,
\sum_{j,k=0}^{n-1}(-1)^k\,\,\frac{\eps^j}{j\,!}\,\,(\mu+\hf)^{j-k-1}\\
\int_0^{\infty}\overline{v}(t+t^{-1})\,\,t^{s-1+\eps\mu}\,\,(\log t)^j\,\,(\delta^ku)(t-t^{-1})\,\,\frac{dt}{t}.
\end{multline}
where the sign $\sim$ indicates that we have dropped the terms originating from the remainder term of (\ref{1013c}) (a sum which it would be better to read in the decreasing order of values of $k$), as well as those for which $j-k-1\geq 0$, which cannot contribute to the residue under investigation.\\

Set $g(w)=\frac{2}{\pi^3}\,\frac{1-2^{-w}}{\zeta(w)}$. Assuming that $\zeta(s-1-\frac{\eps}{2})\neq 0$, the Taylor expansion for $\mu\to-\hf$ of $g(s-1+\eps\mu)$ is
\begin{equation}
g(s-1+\eps\mu)\sim \sum_{\ell\geq 0}
\frac{1}{\ell\,!}\,\,(\eps(\mu+\hf))^{\ell}\,\,g^{(\ell)}(s-1-\frac{\eps}{2}).
\end{equation}
Then,
\begin{multline}
{\mathrm{Res}}_{\mu=-\hf} \Psi^{-,n}(v,\,u;\,\mu;\,s)=\sum_{\ell \geq 0} \frac{1}{\ell\,!}\,\,
g^{(\ell)}(s-1-\frac{\eps}{2})
\sum_{\begin{array}{c}0\leq j,k \leq n-1\\ j-k=-\ell\end{array}} \\ (-1)^k\,\frac{\eps^{k}}{j\,!\,(k-j)\,!}\,
\int_0^{\infty}\overline{v}(t+t^{-1})\,\,t^{s-1-\frac{\eps}{2}}\,\,(\log t)^j\,\,(\delta^ku)(t-t^{-1})\,\,\frac{dt}{t}.
\end{multline}\\

The worst possible singularity as $s$ goes to $s_0$ such that $\zeta(s_0-1-\frac{\eps}{2})=0$ is obtained when taking $\ell$ as large as possible. This corresponds to the choice $j=0,\,k=n-1$, provided that the corresponding integral
\begin{equation}
\int_0^{\infty}\overline{v}(t+t^{-1})\,\,t^{s_0-1-\frac{\eps}{2}}\,\,
(\delta^{n-1}u)(t-t^{-1})\,\,\frac{dt}{t}=\Lan {\mathfrak E}_{1-s_0+\frac{\eps}{2}},\,\Wig(v,\,\delta^{n-1}u)\Ran
\end{equation}
is not zero. It is easy to check (just choose $u$ first), a fact that has already been used in the proof of Theorem \ref{theo53}, that the scalar product there is nonzero for some choice of the pair $v,u$.\\
\end{proof}

\begin{theorem}\label{theo104}
Let $\sigma_0={\mathrm{sup}}\{\Re\rho\colon \zeta(\rho)=0\}$. There cannot exist any pair $a,\eta$ with \,$\frac{\sigma_0}{2}<a<1-\sigma_0$ and $a+\eta<\sigma_0<a+2\eta$ such that the function zeta has no zero in some neighborhood of the interval $[a,\,a+\eta]$.\\
\end{theorem}

\begin{proof}
Note that the theorem is void unless $\sigma_0<\frac{2}{3}$: this will ultimately turn out not to be the case, but Theorem \ref{theo104} is a step on one way to proving this.
It follows from Lemma \ref{lem101} that one can choose $M$ so that $|\zeta(s-1)|^{-1}\leq C\,(1+|\Im s|)^M$ when $a<\Re (s-1)<a+\eta$.\\

Let $a,\eta$ be a pair satisfying the conditions, the incompatibility of which is the claim of the present theorem, and let $v,u$ be an arbitrary pair of functions satisfying the support conditions in Theorem \ref{theo102}. Recall from Theorem \ref{theo102} that $G_{\eps}(v,\,u;\,s)$, as defined by (\ref{105}) for $\Re (s-1)>\sigma_0$, extends analytically to the half-plane $\Re (s-1)>\frac{\sigma_0}{2}$.\\

Choose $\sigma'_0$ such that $\sigma_0<\sigma'_0<1$.
We make on the number $M$ introduced above the additional demand that the inequality $|\zeta(s-1)|^{-1}\leq C\,(1+|\Im s|)^M$ is also valid then $\Re (s-1)>\sigma'_0$. On the other hand, Corollary \ref{prop97} gives for $\Phi^{\pm,n}(v,\,u;\,\nu,\,\mu)$, as long as $|\Re\nu|$ is bounded,  the estimate by a ${\mathrm{O}}(|\mu|^{-n-1})$ for $|\Im \mu|$ large.\\

We choose $n>M$ so as to compensate for the factor $(1+|\Im s|)^M$, at the same time keeping (just) enough to ensure the $d\mu$-summability of the integral (\ref{1019}) below.\\

We know that the function $G_{\eps}^{\,\flat}(v,\,u;\,s)$ extends analytically to the domain $\{s\colon \Re(s-1)>\frac{\sigma_0}{2}\}$, but we only know, yet, an integral representation of it in the domain $\{s\colon \Re(s-1)>\sigma_0\}$. Our main task will consist in building a realization of $G_{\eps}^{\,\flat}(v,\,u;\,s)$ as the sum of an integral meaningful in a larger domain and a crucial residue. We use with $n$ at least as just defined the decomposition
$\Phi(v,\,u;\,\nu,\,\mu)=\Phi^{+,n}(v,\,u;\,\nu,\,\mu)+\Phi^{-,n}(v,\,u;\,\nu,\,\mu)$, with
$\Phi^{\pm,n}(v,\,u;\,\nu,\,\mu)$ as defined in (\ref{933}), and we decompose $G_{\eps}^{\,\flat}(v,\,u;\,s)$ as a sum $G_{\eps}^{\flat,+,n}(v,\,u;\,s)+G_{\eps}^{\flat,-,n}(v,\,u;\,s)$, with
\begin{align}\label{1019}
G_{\eps}^{\flat,\pm,n}(v,\,u;\,s)&=\frac{4}{\pi^2}\,
\int_{\Re \mu=0}\frac{\l(1-2^{-s+1-\eps\mu}\r)^{-1}}{\zeta(s-1+\eps\mu)}\,
\Phi^{\pm,n}(v,\,u;\,s-1+\eps\mu,\,\mu)\,d\mu\nonumber\\
&=\frac{4}{\pi^2}\,
\int_{\Re \mu=0} \Psi_{\eps}^{-,n}(v,\,u;\,\mu;\,s)\,d\mu.
\end{align}\\

We have learned in Corollary \ref{prop97} that estimates of $\Phi^{\pm,n}(v,\,u;\,s-1+\eps\mu,\,\mu)$
by $(1+|\mu|)^{-n-1}$ are guaranteed for $\mu$ in the part away from the point $\mu=-\hf$ of any strip within which $\Re(\eps\mu)$ is bounded: we do not need here, or claim, uniformity with respect to $\Im(s-1)$ (this question will only be addressed when discussing Lindel\"of's hypothesis). For $\Re(s-1)>\sigma_0$, one has after a contour change, with $\alpha>0$,
\begin{equation}\label{1020C}
G_{\eps}^{\,\flat,+,n}(v,\,u;\,s)=\frac{4}{\pi^2}\,
\int_{\Re \mu=\alpha}\frac{\l(1-2^{-s+1-\eps\mu}\r)^{-1}}{\zeta(s-1+\eps\mu)}\,
\Phi^{+,n}(v,\,u;\,s-1+\eps\mu,\,\mu)\,d\mu,
\end{equation}
benefitting from corollary \ref{prop97} and our choice of $n$ to ensure convergence. Take $\alpha$
such that $a+\eps\alpha>\sigma_0$. When $\Re (s-1)\geq a$ and $\Re\mu\geq\alpha$, one has $\Re(s-1+\eps\mu)\geq\Re(s-1+\eps\alpha)>\sigma_0$, and the equation (\ref{1020C}) gives the desired continuation of $G_{\eps}^{\,\flat,+,n}(v,\,u;\,s)$. The part of the analysis concerning $G_{\eps}^{\,\flat,+,n}(v,\,u;\,s)$ follows: the function $G_{\eps}^{\,\flat,+,n}(v,\,u;\,s)$ can be continued to the half-plane $\Re(s-1)>a$.\\

As a consequence, the function $G_{\eps}^{\,\flat,-,n}(v,\,u;\,s)=G_{\eps}^{\,\flat}(v,\,u;\,s)-G_{\eps}^{\,\flat,+,n}(v,\,u;\,s)$ can also be continued to the half-plane $\Re(s-1)>a$. This being said, we completely forget about $G_{\eps}^{\flat,+,n}(v,\,u;\,s)$
and concentrate on $G_{\eps}^{\flat,-,n}(v,\,u;\,s)$. \\

In the analysis of this term, we need to move $\Re\mu$ to the left of $-\hf$ so as to come during the move across the pole $\mu=-\hf$, the residue at which, as computed in Theorem \ref{theo103}, is crucial. With $\beta=-\frac{\eta}{\eps}$, a number $<-\hf$ for $\eps$ small, we first prove, for $\Re(s-1)>\sigma_0+\eta$, the identity
\begin{align}\label{922A}
G_{\eps}^{\,\flat,-,n}(v,\,u;\,s)&=\frac{4}{\pi^2}\,\int_{\Re \mu=0} \Psi_{\eps}^{-,n}(v,\,u;\,\mu;\,s)\,d\mu\nonumber\\
&=\frac{4}{\pi^2}\,\int_{\Re \mu=\beta} \Psi_{\eps}^{-,n}(v,\,u;\,\mu;\,s)\,d\mu+2i\pi\,{\mathrm{Res}}_{\mu=-\hf}\Psi_{\eps}^{-,n}(v,\,u;\,\mu;\,s).
\end{align}
One has $\zeta(s-1+\eps\mu)\neq 0$ if $\Re\mu\geq \beta$ and $\Re(s-1)>\sigma_0-\Re(\eps\mu)=\sigma_0+\eta$, so that the two integrals in (\ref{922A}) are convergent, and are the limits as $A\to \infty$ of the same $d\mu$-integrals taken between $-iA$ and $iA$. Also, as $A\to \infty$, $\int_{\beta\pm iA}^{\pm iA} \Psi_{\eps}^{-,n}(v,\,u;\,\mu;\,s)\,d\mu\to 0$ in view of Proposition \ref{prop97}.\\

We continue the two integrals present in (\ref{922A}). The integral on the line $\Re\mu=0$ is (absolutely) convergent for $\Re(s-1)>\sigma_0$. On the other hand, when $\Re\mu=\beta$, one has by one of the main assumptions of Theorem \ref{theo104}, to be followed by Lemma \ref{lem101}, that the integral on the line $\Re\mu=\beta$ is (absolutely) convergent when $a\leq \Re(s-1-\eta)<a+\eta$ , or $a+\eta\leq\Re(s-1)<a+2\eta$. We have obtained a continuation of the integral on the line $\Re\mu=\beta$, already convergent for $\Re(s-1)$ in some neighborhood of $[a,a+\eta]$, to the domain $\{s\colon a<\Re(s-1)<a+2\eta\}$. Since $a+2\eta>\sigma_0$, this is truly a continuation to the half-plane $\{s\colon \Re(s-1)>a\}$.\\

It then follows from Theorem \ref{theo103} that $\zeta(s-1-\frac{\eps}{2})\neq 0$ for $s$ in this half-plane. This concludes the proof of Theorem \ref{theo104}).\\

\end{proof}

\begin{theorem}\label{theo105}
One has $\sigma_0\geq \frac{2}{3}$. In particular, the Riemann hypothesis does not hold. The set $Z$ of real parts of zeros is infinite.\\
\end{theorem}

\begin{proof}
Using the functional equation of zeta, one can apply Theorem \ref{theo104} and obtain a contradiction if one can find a pair $a,\eta$ such that
\begin{equation}\label{1014}
\frac{\sigma_0}{2}<a<a+\eta<1-\sigma_0,\qquad a+2\eta>\sigma_0.
\end{equation}
This is indeed the case if $\sigma_0<\frac{2}{3}$. For, then, one can find $\eta$ such that $\frac{3\sigma_0}{2}-1<\eta<1-\frac{3\sigma_0}{2}$. Choosing $a$ such that $\frac{\sigma_0}{2}<a<1-\sigma_0-\eta$, we are done.\\

For the second part, let us assume that, among the real parts of zeros of zeta, there is a number $\sigma_1$ immediately inferior to $\sigma_0$. Choose $\eta<\sigma_0-\sigma_1$, then choose $a$ such that $1-\sigma_1-2\eta<a<1-\sigma_0-\eta$. As $a+\eta<1-\sigma_0$, the interval $[a,a+\eta]$ is zero-free. However, $1-\sigma_1\in [a,a+2\eta]$, which contradicts Theorem \ref{theo104}.\\
\end{proof}

In \cite{bom}, Bombieri (almost) proved that if the Riemann hypothesis does not hold, the set of zeros of zeta is infinite.\\

\noindent
{\em Remark\/} 12.1. It looks very unlikely that one will ``ever'' find an explicit non-trivial zero of zeta not on the critical line. It is even hard to see how one could find a number $\tau>0$, in the spirit of Skewes' number (a huge bound on the location of the first sign change in the difference $\pi(n)-{\mathrm{li}}(n)$), such that one could be assured that there exist non-trivial zeros with a positive imaginary part less than $\tau$. The developments in the last two sections were conclusive because we concentrated on the real parts, not the imaginary parts of zeros.
Short of finding a ``spectral'' interpretation of the set of real parts of zeros, we shall build in Section 2.1 a family $(ds_{\Sigma}^{(\rho)})$ of fully visible (as opposed to Poincar\'e series in general) automorphic one-dimensional measures in the hyperbolic half-plane with the following property: assuming that $\rho$ is real, $ds_{\Sigma}^{(\rho)}$ misses some Eisenstein series $E_{\frac{1-i\lambda}{2}}^*$ in its spectral decomposition if and only if $\frac{\rho}{2}$ is the real part of a zero of zeta.\\

\section{Zeros accumulate on the boundary of the critical strip}

%The following generalization of Theorem \ref{theo72} is meant as an exercise in Eulerian arithmetic. It played a role in earlier versions of the present work, where it made it possible to consider, besides the functions $F_0(v,\,u;\,s)$ and $F_0^{\sharp}(v,\,u;\,s)$ introduced in Definition \ref{def91}, their companion
%\begin{equation}
% F_0^{\mathrm{full}}(s)=\sum_{Q\,{\mathrm{odd}}} Q^{-s}\l(v\,\big|\,\Psi\l(Q^{2i\pi\E}{\mathfrak T}_{\frac{\infty}{2}}\r) u\r).
%\end{equation}
%We shall not give details since in a Remark, at the end of Section 1.9, we shall give more precise, and interesting, results.

%In Theorem \ref{theo95}, the effect of changing $F_{\eps}(v,\,u;\,s)$ to $F_{\eps}^{\mathrm{full}}(s)$ is to replace the function $f(s)=(1+2^{-s})^{-1}\frac{\zeta(s)}{\zeta(2s)}$ by
%$f^{\mathrm{full}}(s)=(1+2^{-s})^{-1}\zeta(s)$, so that the condition $a>\frac{\sigma_0}{2}$ in Theorem \ref{theo104} can be dropped. We obtain then the improvement $\sigma_0>\frac{3}{4}$ over Theorem \ref{theo105}. Though this generalization is straightforward, we shall not give details because, in a surprising way, the analysis of the function $F_{\eps}^{\sharp}(s)$, made in view of a proof of the Lindel\"of hypothesis. will at last give full results regarding $\sigma_0=1$ and the set of real parts of zeros of zeta.\\

Recall (paragraph following (\ref{A33})) that $Q_{\bullet}$ is the squarefree version of $Q$ .\\

\begin{theorem}\label{theo111}
Let $N=RQ$, where we assume that $(R,Q)=1$, that $R$ is squarefree and that both factors are odd. Let $v,u\in C^{\infty}(\R)$, compactly supported, satisfying the conditions that $x>0$ and $0<x^2-y^2<8$ when $v(x)u(y)\neq 0$. Then, if $N$ is large enough,
\begin{multline}\label{111}
\l(v\,\big|\,\Psi(Q^{2i\pi\E}{\mathfrak T}_N)\,u\r)=\sum_{\begin{array}{c}R_1R_2=R\\
Q_1Q_2=Q_{\bullet}\end{array}} \mu(R_1Q_1)\\
\overline{v}\l(\frac{R_1}{Q_2}\,\frac{Q_{\bullet}}{Q}+\frac{Q_2}{R_1}\,\frac{Q}{Q_{\bullet}}\r)\,
u\l(\frac{R_1}{Q_2}\,\frac{Q_{\bullet}}{Q}-\frac{Q_2}{R_1}\,\frac{Q}{Q_{\bullet}}\r).
\end{multline}\\
\end{theorem}

\begin{proof}
Theorem \ref{theo61} did not require that $N$ should be squarefree: the only point that mattered was that $R$ and $Q$ should be relatively prime. We thus compute, with $b(j,\,k)=a^{\flat}((j,k,N))=1-p\,\,{\mathrm{char}}(j\equiv k \equiv N\equiv 0\mm p)$, the sum $f_N(j,\,s)$ defined in (\ref{63}). The Eulerian reduction (\ref{73}) is still valid if $(N_1,N_2)=1$ and one can reduce the computation to the case when $n=Q=p^{\gamma}$ for some prime $p$: there is no difference with the earlier situation so far as the $R$-factor is concerned. One has
\begin{multline}\label{112}
f_N(j,s)=\frac{1}{p^{\gamma}}\sum_{k \mm p^{\gamma}} \l[1-p\,\,{\mathrm{char}}(j\equiv k \equiv 0\mm p)\r] \exp\l(\frac{2i\pi ks}{p^{\gamma}}\r)\\
=\frac{1}{p^{\gamma}}\sum_{k \mm p^{\gamma}} \exp\l(\frac{2i\pi ks}{p^{\gamma}}\r)
-{\mathrm{char}}(j\equiv 0\mm p)\,\frac{1}{p^{\gamma-1}}\sum_{k_1 \mm p^{\gamma-1}}
\exp\l(\frac{2i\pi k_1s}{p^{\gamma-1}}\r)\\
={\mathrm{char}}(s\equiv 0\mm p^{\gamma})-{\mathrm{char}}(j\equiv 0\mm p)\,{\mathrm{char}}(s\equiv 0\mm p^{\gamma-1})\\
={\mathrm{char}}(s\equiv 0\mm p^{\gamma-1})\,\times\,\l[{\mathrm{char}}\l(\frac{s}{p^{\gamma-1}}\equiv 0\mm p\r)-{\mathrm{char}}(j\equiv 0\mm p)\r].
\end{multline}\\

If $Q=\prod_p\,p^{\gamma_p}$, one has $\prod_p\,p^{\gamma_p-1}=\frac{Q}{Q_{\bullet}}$. Piecing together
the equations (\ref{112}) for all values of $p$ dividing $N$, one obtains
\begin{multline}
f_N(j,,s)={\mathrm{char}}\l(s\equiv 0\mm \frac{Q}{Q_{\bullet}}\r)\,\times\,
\sum_{M_1M_2=N_{\bullet}} \mu(M_1)\\
{\mathrm{char}}\l(\frac{s}{Q/Q_{\bullet}}\equiv 0 \mm M_2\r)\,{\mathrm{char}}(j\equiv 0\mm M_1).
\end{multline}\\

The transformation $\theta_N$ is defined as in (\ref{65}). Just as in the proof of Theorem \ref{theo72}, one has $(\theta_Nv)(m)=v\l(\frac{m}{N}\r)$ and $(\theta_Nu)(n)=u\l(\frac{n}{N}\r)$ if $N$ is large. Applying the recipe in Theorem \ref{theo61} and setting $M_2=R_2Q_2,\,M_1=R_1Q_1$, one finds
\begin{multline}
\l(v\,\big|\,\Psi(Q^{2i\pi\E}{\mathfrak T}_N)\,u\r)=\sum_{\begin{array}{c}R_1R_2=R\\
Q_1Q_2=Q_{\bullet}\end{array}} \mu(R_1Q_1)\,\overline{v}\l(\frac{m}{N}\r)\,u\l(\frac{n}{N}\r)\\
{\mathrm{char}} \l(m-n\equiv 0 \mm 2Q\,(\frac{Q}{Q_{\bullet}})\,(R_2Q_2)\r)\,
{\mathrm{char}}(m+n\equiv 0 \mm 2RR_1Q_1).
\end{multline}
Setting
\begin{equation}
m+n=2R(R_1Q_1)\,a,\qquad m-n=2Q\l(\frac{Q}{Q_{\bullet}}\r) (R_2Q_2)\,b
\end{equation}
with $a,b\in \Z$ and $a>0$, one has
\begin{equation}
\frac{m^2-n^2}{N^2}=\frac{R(R_1Q_1)\,\times\,Q\l(\frac{Q}{Q_{\bullet}}\r)R_2Q_2}{N^2}\,\times\,4ab=4ab,
\end{equation}
so that $a=b=1$. One has
\begin{equation}
\frac{RR_1Q_1}{N}=\frac{R_1Q_1}{Q}=\frac{R_1}{Q_2}\,\times\,\frac{Q_{\bullet}}{Q},\qquad
\frac{Q}{N}\l(\frac{Q}{Q_{\bullet}}\r)R_2Q_2=\frac{1}{R}\,\frac{QR_2Q_2}{Q_{\bullet}}=\frac{Q_2}{R_1}\,
\frac{Q}{Q_{\bullet}}
\end{equation}
and, finally,
\begin{equation}
\frac{m}{N}=\frac{R_1}{Q_2}\,\frac{Q_{\bullet}}{Q}+\frac{Q_2}{R_1}\,\frac{Q}{Q_{\bullet}},\qquad
\frac{n}{N}=\frac{R_1}{Q_2}\,\frac{Q_{\bullet}}{Q}-\frac{Q_2}{R_1}\,\frac{Q}{Q_{\bullet}}.
\end{equation}
The equation (\ref{111}) follows.\\
\end{proof}

The following improvement will give the best result that $\sigma_0=1$. Replace in the definition of $F_0^{\,\flat}(v,\,u;\,s)$, for some $\kappa=1,2,\dots$ the summation over the set of all squarefree odd integers by the set $\N^{[\kappa]}$ of all positive odd integers in the decomposition of which all primes are taken to powers with exponents $\leq\kappa$. For every $\eps>0$, the function
\begin{equation}
F_{\eps}^{\,\flat,[\kappa]}(v,\,u;\,s)=\sum_{Q\in\N^{[\kappa]}} Q^{-s}\,\l(v\,\big|\,\Psi\l(Q^{2i\pi\E}{\mathfrak T}_{\frac{\infty}{2}}\r)\,u_Q\r)
\end{equation}
is entire. Indeed, the only difference with the proof of Theorem \ref{92} is that a loss by  the factor $\frac{Q}{Q_{\bullet}}$ may occur in the estimate of the number of available $R_1$: now, arbitrary powers of $Q^{-\eps}$ can be gained as in (\ref{96}).\\

Theorem \ref{theo104} generalizes as follows.\\

\begin{theorem}\label{theo104ter}
Let $\sigma_0={\mathrm{sup}}\,\{\Re\rho\colon \zeta(\rho)=0\}$. There cannot exist any pair $a,\eta$ with \,$\frac{\sigma_0}{1+\kappa}<a<1-\sigma_0$ and $a+\eta<\sigma_0<a+2\eta$ such that the function zeta has no zero in some neighborhood of the interval $[a,\,a+\eta]$.\\
\end{theorem}

\begin{proof}
The sole difference is that the condition $a>\frac{\sigma_0}{2}$ has been replaced by $a>\frac{\sigma_0}{1+\kappa}$. To see this, it suffices to substitute for the function $f(s)=(1+2^{-s})^{-1}\frac{\zeta(s)}{\zeta(2s)}$ the function
$\prod_{p\neq 2}(1+p^{-s}+\dots+p^{-\kappa s})=(1+2^{-s}+\dots+2^{-\kappa s})^{-1}\frac{\zeta(s)}{\zeta((\kappa+1)s)}$.\\
\end{proof}

\begin{corollary}
\begin{equation}
\sigma_0=1.
\end{equation}\\
\end{corollary}

\begin{proof}
Imitating the proof of Theorem \ref{theo105}, we note that we shall obtain a contradiction with Theorem \ref{theo104ter} if we can find a pair $a,\eta$ such that
\begin{equation}
\frac{\sigma_0}{\kappa+1}<a<a+\eta<1-\sigma_0,\qquad a+2\eta>\sigma_0.
\end{equation}
This is indeed the case if $\sigma_0<\frac{1}{1+\frac{1}{\kappa+1}}$. For, then, one can find $\eta$ such that
\begin{equation}
\l(\frac{1}{\kappa+1}+1\r)\sigma_0-1<\eta<1-\l(\frac{1}{\kappa+1}+1\r) \sigma_0
\end{equation}
Choosing then $a$ such that $\frac{\sigma_0}{\kappa +1}<a<1-\sigma_0-\eta$, we obtain the desired contradiction.\\

To sum it up: using the function $F_{\eps}^{[\kappa]}(v,\,u;\,s)$ in place of $F_{\eps}(v,\,u;\,s)$, we have obtained
the inequality $\sigma_0\geq \frac{\kappa+1}{\kappa+2}$. Hence, $\sigma_0=1$.\\

\end{proof}

\section{The Lindel\"of hypothesis}

Given $\sigma \in ]0,1[$, set
\begin{equation}
r(\sigma)={\mathrm{inf}}\{r'\colon |\zeta(\sigma+i\lambda)|\leq C\,|\lambda|^{r'}\,\,{\mathrm{as}}\,\,|\lambda|\to\infty\}.
\end{equation}\\

We here prove the Lindel\"of hypothesis, according to which $r(\hf)=0$. It has been known for more than a century \cite{had} that it would be a consequence of the Riemann hypothesis. However, in view of the result of Section 1.10, a direct proof of the Lindel\"of hypothesis is called for. Some progress in this direction was done in \cite{bomiw},\,\cite{bour},\,\cite{hux}.\\

Let us pretend for awhile that we do not know that the value of $\sigma_0={\mathrm{sup}}\,\{\Re s\colon \zeta(s)=0\}$ is $1$. We shall prove in what follows that
$r(\sigma)=0$ for $\sigma>\frac{\sigma_0}{2}$. This would be a contradiction if one had $\sigma_0<1$ because this would imply $r(\sigma)\leq\frac{\sigma_0-1}{2}$ for any $\sigma>1-\frac{\sigma_0}{2}$ in view of the functional equation and asymptotics of the Gamma function \cite[p.13]{mos}
\begin{equation}
|\Gamma(\sigma+it)|\sim (2\pi)^{\hf}\,e^{-\frac{\pi\,|t|}{2}}\,|t|^{\sigma-\hf},\qquad |t|\to \infty.
\end{equation}
Then, the maximum principle in a half-strip with a part of the line $\Re s=1-\frac{\sigma_0}{2}$ for left side would give a contradiction. We shall thus obtain a new disproof of the Riemann hypothesis and a new proof that zeros accumulate on the line $\Re s=1$. Finally, we shall prove that the set of real parts of zeros of zeta is dense in $(0,1)$.\\

Recall Theorem \ref{theo72} and the fact that changing ${\mathfrak T}_N$ to ${\mathfrak S}_N$ only requires changing all M\"obius factors to their absolute values.  Assume that $N=RQ$ is squarefree odd, and that $v,u\in C^{\infty}(\R)$ are compactly supported and satisfy the condition that $x>0$ and $0<x^2-y^2<8$ when $v(x)u(y)\neq 0$. Then, if $N$ is large enough,
\begin{equation}
\l(v\,\big|\,\Psi(Q^{2i\pi\E}{\mathfrak S}_N)\,u\r)=\sum_{Q_1Q_2=Q}
\sum_{R_1|R} \overline{v}\l(\frac{R_1}{Q_2}+\frac{Q_2}{R_1}\r)\,
u\l(\frac{R_1}{Q_2}-\frac{Q_2}{R_1}\r).
\end{equation}
Just as in the proof of Theorem \ref{theo92}, one has then under the support conditions there
\begin{equation}
\l(v\,\big|\,\Psi(Q^{2i\pi\E}{\mathfrak S}_{\frac{\infty}{2}})\,u\r)=\sum_{Q_1Q_2=Q}
\sum_{\begin{array}{c} R_1\in \Sqo \\(R_1,Q)=1 \end{array}} \overline{v}\l(\frac{R_1}{Q_2}+\frac{Q_2}{R_1}\r)\,
u\l(\frac{R_1}{Q_2}-\frac{Q_2}{R_1}\r).
\end{equation}\\

As a consequence,
\begin{theorem}\label{theo131bis}
Under the support conditions in Theorem {\em\ref{theo92}\/}, the function
\begin{equation}\label{131ter}
F_{\eps}(v,\,u;\,s)=\sum_{Q\in \Sqo} Q^{-s}\l(v\,\big|\,\Psi\l(Q^{2i\pi\E}{\mathfrak S}_{\frac{\infty}{2}}\r) u_Q\r),
\end{equation}
with $u_Q(y)=Q^{\frac{\eps}{2}}\,u(Q^{\eps}y)$, is entire and uniformly bounded in every half-plane $\{s\colon \Re(s-1)>a\}$, with $a$ arbitrary.\\
\end{theorem}

\begin{proof}
The only novelty is the claim of uniform boundedness: it originates from the fact that $s$ enters the definition of $F_{\eps}(v,\,u;\,s)$ only through  the factor $Q^{-s}$.\\
\end{proof}

The developments which follow are, up to some point, similar to those in Section \ref{secAref}: only, half-zeros of zeta take the place of zeros, which facilitates the line changes. On the other hand, we must now pay attention to the uniformity of estimates with respect to the imaginary part of $s$, which we shall do only after the line changes are complete.\\

\begin{theorem}\label{theo132}
Let $\sigma'_0\in ]\sigma_0,2[$. For $\Re(s-1)>1$, one has $F_{\eps}(v,\,u;\,s)=E_{\eps}(v,\,u;\,s)-i\,G_{\eps}(v,\,u;\,s)$,
with
\begin{equation}\label{138}
E_{\eps}(v,\,u;\,s)=-\frac{1}{2\pi}\int_{\Re\mu=0}\int_{\Re\nu=\frac{\sigma'_0}{2}}\,\,
f(\nu)\,f(s-\nu+\eps\mu)\,\Phi(v,\,u;\,\nu,\,\mu)\,d\mu\,d\nu
\end{equation}
and
\begin{equation}\label{139}
G_{\eps}(v,\,u;\,s)=\frac{4}{\pi^2}\,\int_{\Re \mu=0}f(s-1+\eps\mu)\,
\l[\Phi(v,\,u;\,1,\,\mu)-\Phi(v,\,u;\,s-1+\eps\mu)\r] \,d\mu.
\end{equation}
The function $G_{\eps}(v,\,u;\,s)$ can be continued to the half-plane $\{ s\colon \Re(s-1)>\frac{\sigma_0}{2}\}$ and the stated identity extends for $\Re(s-1)>\frac{\sigma_0}{2}$.\\
\end{theorem}

\begin{proof}

Let us recall (\ref{921.5}): one has if $c>1$ and $\Re(s-1)$ is large
\begin{equation}\label{1310}
F_{\eps}(v,\,u;\,s)=-\frac{1}{2\pi}\int_{\Re\mu=0}\int_{\Re\nu=c} f(\nu)\,f(s-\nu+\eps\mu)\,\Phi(v,\,u;\,\nu,\,\mu)\,\,d\nu\,d\mu.
\end{equation}
For $\Re\nu>\frac{\sigma_0}{2}$ and $\Re(s-c)>\frac{\sigma_0}{2}$, neither $\nu$ nor $s-1+\eps\mu$ can be half a zero of zeta, and we may thus concentrate on the poles originating from the numerators of the two fractions $f(\nu)$ and $f(s-\nu+\eps\mu)$. We imitate the proof of Theorem \ref{theo102}.
Let $1<c_1<c$. For $c_1<\Re(s-1)<c$. The integral (\ref{1310}) is convergent, but the new domain does not intersect the preceding one. To obtain the continuation of $F_{\eps}(v,\,u;\,s)$ as so defined in the new domain, we must add to the integral taken on $\Re\nu=c_1$, if $c_1<\Re(s-1)<c$,  a residue at $\nu=s-1+\eps\mu$. We make after that a new change of contour from the line $\Re\nu=c_1>1$ to the line $\Re\nu=\frac{\sigma'_0}{2}$, coming across the new pole $\nu=1$ on the way. We obtain (\ref{139}) after having observed that
\begin{equation}
{\mathrm{Res}}_{\nu=1}(f(\nu)\,f(s-\nu+\eps\mu))=\frac{4}{\pi^2}\,f(s-1+\eps\mu)=
-{\mathrm{Res}}_{\nu=s-1+\eps\mu}(f(\nu)\,f(s-\nu+\eps\mu)).
\end{equation}
From its expression, $G_{\eps}(v,\,u;\,s)$ does not depend on the choice of $\sigma'_0>\sigma_0$: neither does $E_{\eps}(v,\,u;\,s)$ as a consequence.\\

\begin{theorem}
Given $\gamma\geq\frac{\sigma_0}{2}$ and $\delta>0$, there exists $C>0$ such that $|E_{\eps}(v,\,u;\,s)|\leq C\,(1+|\Im s|)^{\delta}$ for every $s$ such that $\Re(s-1)=\gamma$.
The same inequality holds after one has replaced the function $E_{\eps}(v,\,u;\,s)$ by $G_{\eps}(v,\,u;\,s)$.\\
\end{theorem}

\begin{proof}
One uses (\ref{138}). Fix $\sigma'_0>\sigma_0$ such that $r\l(1+\frac{\sigma_0-\sigma'_0}{2}\r)<\delta$,
as is possible since $r(1)=0$ and the function $r$ is continuous. The inequality $\Re(s-1)=\gamma\geq\frac{\sigma_0}{2}$ implies that, for $\Re\nu=\frac{\sigma'_0}{2}$, one has $\Re s\geq 1+\frac{\sigma_0-\sigma'_0}{2}$.\\

This proves the first part: the second is a consequence of the identity $F_{\eps}=E_{\eps}-i\,G_{\eps}$
and of Theorem \ref{theo131bis}.\\
\end{proof}

As in Section 1.10, only taking $n=1$ (no bound of $(\zeta(s))^{-1}$ on lines is needed now), we decompose now $\Phi(v,\,u;\,\nu,\,\mu)$
as $\Phi^{+,1}(v,\,u;\,\nu,\,\mu)+\Phi^{-,1}(v,\,u;\,\nu,\,\mu)$ and
$G_{\eps}(v,\,u;\,s)$ as $G_{\eps}^+(v,\,u;\,s)+G_{\eps}^-(v,\,u;\,s)$, with (use (\ref{139}))
\begin{equation}
G_{\eps}^{\pm}(v,\,u;\,s)=\frac{8i}{\pi}\,\int_{\Re \mu=0}f(s-1+\eps\mu)\,
\l[\Phi^{\pm,1}(v,\,u;\,1,\,\mu)-\Phi^{\pm,1}(v,\,u;\,s-1+\eps\mu,\,\mu)\r]\,d\mu,
\end{equation}
an identity valid for $\Re(s-1)>1$.\\

When the $\pm$ sign is $+$, one can without any difficulty make the change of contour from the line $\Re\mu=0$ to any line on the right, say the line $\Re\mu=\frac{1-\frac{\sigma_0}{2}}{\eps}$: after this has been done, $G_{\eps}^+(v,\,u;\,s)$ is bounded on any line $\{s\colon \Re(s-1)=\beta\}$ with $\beta>\frac{\sigma_0}{2}$ since $f(s-1+\eps\mu)$ is. The function $G_{\eps}^-(v,\,u;\,s)=G_{\eps}(v,\,u;\,s)-G_{\eps}^+(v,\,u;\,s)$ is thus also bounded on any such line.\\

What remains to be done is pushing the line of $d\mu$-integration for $G_{\eps}^-(v,\,u;\,s)$ to the left, so as to encompass the {\em interesting\/} point $\mu--\hf$.\\

\begin{lemma}\label{Llem13}
Let $\alpha<-\hf$ be given, For $s\in \C$ such that $\Re(s-1+\eps\alpha)>\frac{\sigma_0}{2}$ and $\Re(s-1)<1$, one has whenever $v,u\in {\mathcal S}(\R)$ satisfy the support conditions in Theorem {\em\ref{theo92}\/} the identity
\begin{multline}\label{U110}
G_{\eps}^-(v,\,u;\,s)\\
=\frac{4}{\pi^2}\,\int_{\Re \mu=\alpha}f(s-1+\eps\mu)\,
\l[\Phi^{-,1}(v,\,u;\,1,\,\mu)-\Phi^{-,1}(v,\,u;\,s-1+\eps\mu,\,\mu)\r]\,d\mu\\
+\frac{8i}{\pi}\, {\mathrm{Res}}_{\mu=-\hf}\l[f(s-1+\eps\mu)\,
[\Phi^{,1}(v,\,u;\,1,\,\mu)-\Phi^{-,1}(v,\,u;\,s-1+\eps\mu,\,\mu)\r].
\end{multline}\\
\end{lemma}

\begin{proof}
We start from (\ref{138}): two things need to be done. First, we must replace the condition $\Re(s-1)>1$ by the new conditions on $s$; next, perform the line change from $\Re\mu=0$ to $\Re\mu=\alpha$. The denominator $\zeta(2(s-1+\eps\mu))$ of the fraction for $f(s-1+\eps\mu)$ is nonzero during the move since $\frac{\sigma_0}{2}<\Re(s-1+\eps\alpha)$ but continuing, with $\Re\mu=0$, the identity (\ref{138}) from values of $s$ such that $\Re(s-1)>1$ to $\Re(s-1)<1$, we must take into account the pole $\mu=\frac{2-s}{\eps}$ of the numerator $\zeta(s-1+\eps\mu)$.  However, when $\mu=\frac{2-s}{\eps}$, the two terms of the expression within brackets of the right-hand side of (\ref{U110}) cancel off, so there is no corresponding residue to add.\\
\end{proof}

The residue which is the second term of (\ref{U110}) must be analyzed carefully. For, to benefit from (\ref{U110}) to obtain bounds on $f(s-1-\frac{\eps}{2})$, we must manage so that the factor ${\mathrm{Res}}_{\mu=-\hf}\l[\Phi^{-,1}(v,\,u;\,1,\,\mu)-\Phi^{-,1}(v,\,u;\,s-1+\eps\mu)\r]$ should not be too close to zero, especially as $|\Im s|\to \infty$.\\

To do so, nothing prevents us from getting help from an $s$-dependent pair $v,u$, but we must be careful about the uniformity of estimates with respect to $\Im s$ . If $s=\beta+i\lambda$, we shall use in place of the pair $v,u$ of functions in ${\mathcal S}(\R)$ with the usual support properties the pair $v^{[\lambda]},\,u$ with
\begin{equation}
v^{[\lambda]}(x)=v(x)\,\l(\frac{x+\sqrt{x^2-4}}{2}\r)^{i\lambda},
\end{equation}
the same as $v(x)\,t^{i\lambda}$ when $x=t+t^{-1}$. The function $v^{[\lambda]}$ lies in ${\mathcal S}(\R)$ and the norm ${\mathrm{sup}}\,|v^{[\lambda]}|$ does not depend on $\lambda$ but a norm involving a derivative of $v^{[\lambda]}(x)$ cannot remain bounded as $|\lambda|\to \infty$. Corollary \ref{prop97} was precisely adapted to this situation: more precisely, one has
\begin{equation}
\Phi^{-,1}(v^{[\lambda]},\,u;\,\nu,\,\mu)=\Phi^{-,1}(v,\,u;\,\nu-i\lambda,\,\mu).
\end{equation}\\

\begin{theorem}\label{theo125}
For some choice of $v,u$, the residue at $\mu=-\hf$ of $\Phi^{-,1}(v^{[\lambda]},\,u;\,1,\,\mu)-
\Phi^{-,1}(v^{[\lambda]},\,u;\,s-1+\eps\mu,\,\mu)$ is bounded away from zero as $|\Im s|\to \infty$.\\
\end{theorem}

With $s=\beta+i\lambda$, rewrite this residue as
\begin{equation}
{\mathrm{Res}}_{\mu=-\hf}\l[\Phi^{-,1}(v,\,u;\,1-i\lambda,\,\mu)-
\Phi^{-,1}(v,\,u;\,\beta-1+\eps\mu,\,\mu)\r].
\end{equation}
One has (1.9.35)
\begin{equation}
{\mathrm{Res}}_{\mu=-\hf} u_-^{\mu,1}(y)=\frac{u(y)}{2\pi},
\end{equation}
so that
\begin{equation}
{\mathrm{Res}}_{\mu=-\hf} \Phi^-(v,\,u;\,\nu,\,\mu)=\frac{1}{2\pi}\int_0^{\infty} \overline{v}(t+t^{-1})\,u(t-t^{-1})\,t^{\nu-1}\,dt.
\end{equation}
It follows that
\begin{multline}
{\mathrm{Res}}_{\mu=-\hf} \l[\Phi^-(v,\,u;\,1-i\lambda,\,\mu)-
\Phi^-(v,\,u;\,\beta-1+\eps\mu,\,\mu)\r]\\
=\frac{1}{2\pi}\int_0^{\infty} \overline{v}(t+t^{-1})\,u(t-t^{-1})\,\l[t^{-i\lambda}-t^{\beta-\frac{\eps}{2}}\r]\,dt
\end{multline}
As $|\lambda|\to \infty$, the first term goes to zero since the product is as nice as one might hope:
if one takes, say, $v$ and $u$ real, non-negative and such that $v(x)u(\sqrt{x^2-4})\neq 0$, the residue under analysis will be bounded away from zero for $|\lambda|=|\Im s|$ large.\\
\end{proof}

\begin{theorem}(Lindel\"of hypothesis)\label{theo136}
For every $\eps>0$, one has $r(\sigma)=0$ for $\frac{\sigma_0+\eps}{2}<\sigma<1$.\\
\end{theorem}

\begin{proof}
Take $\sigma=\Re(s-1)=\beta-1$. When $\frac{\sigma_0+\eps}{2}<\sigma$, one has $\frac{\sigma_0}{2}<\sigma+\eps\alpha$ for some $\alpha<-\hf$ and one can apply Lemma \ref{Llem13}.
This gives the desired estimate of the integral term in (\ref{U110}). Then, the same estimate is valid for the residue involved there. One concludes with the help of Theorem \ref{theo125}.\\

\end{proof}

\begin{theorem}
There cannot exist $0<\sigma_1<\sigma_2<1$ and $A>0$ such that $\zeta(s)\neq 0$ for $\Re s$ in some neighborhood of $[\sigma_1,\sigma_2]$ and $\Im s>A$. In other words, for every number $\sigma \in [0,1]$ distinct from the real part of a zero, there is a sequence $(\rho_j)$ of zeros such that, as $j\to \infty$, $\Re \rho_j \to \sigma$ and $|\Im \rho_j|\to \infty$. In particular, the set of real parts of zeros of zeta is dense in $(0,1)$.

\end{theorem}

\begin{proof}
For $0<\tau<1$, let $r(\tau)$ be the smallest number $r$ such that, for every $r'>r$, one has $\zeta(\tau+i\lambda)={\mathrm{O}}(\lambda^{r'})$ as $\lambda\to \infty$. As is immediate and well-known, if, as is the case, the Lindel\"of hypothesis is true, while $r(\tau)=0$ for $\tau\geq \hf$, the function $r(\tau)$ is, for $0\leq \tau\leq \hf$, the strictly decreasing affine function taking the obvious values at $0$ (use functional equation) and $\hf$.\\

We shall prove that, under the conditions on $\sigma_1,\sigma_2$ the impossibility of which is to be established, one has $r(\frac{\sigma_1}{2})=r(\frac{\sigma_2}{2})$,
a contradiction. For simplicity, let us first assume that there are no zeros with real parts in some neighborhood of $[\sigma_1,\sigma_2]$. Then, the decomposition $F_{\eps}(s)=
E_{\eps}(s)-i\,G_{\eps}(s)$ is not valid in general for $\Re(s-1)>\sigma_2$ or $\Re(s-1)>\sigma_1$. But, defining $E_{\eps}(\sigma;\,s)$ as the same as (\ref{138}) after $\frac{\sigma'_0}{2}$ has been replaced by $\frac{\sigma}{2}$, one has for $\frac{\sigma_1}{2}<\Re(s-1)<\frac{\sigma_2}{2}$ the identity
\begin{equation}
E_{\eps}(\sigma_1;\,s)-E_{\eps}(\sigma_2;\,s)=0.
\end{equation}
Indeed, the difference $E_{\eps}(\sigma,\,s)-i\,G_{\eps}(s)$ is not the same as $F_{\eps}(s)$ when $\sigma=\sigma_1$ or $\sigma_2$ because, in either case, one has to take into account the collection, finite or not, of half-zeros of zeta with a real part in $]\frac{\sigma_2}{2},\hf[$, using Valiron's trick if needed: but changing here $\sigma_2$ to $\sigma_1$ would not change this collection.\\

Just imitating the end of the proof of Theorem \ref{theo136}, replacing in Theorem \ref{theo132} the pair $\sigma_0,1$ by $\sigma_1,\sigma_2$, we obtain that $|\zeta(s-1)|\leq C\,(1+|\Im s|)^{r'(\sigma_2)}$ on lines such that $\frac{\sigma_1+\eps}{2}<\Re(s-1)<\frac{\sigma_2+\eps}{2}$, so that $r(\sigma_1)=r(\sigma_2)$. The addition of a finite number of residues would not change either number.\\
\end{proof}

\newpage

\end{document}